
\documentclass[11pt, leqno]{amsart}

\setlength{\textwidth}{17cm}
\setlength{\textheight}{21.6cm} \setlength{\oddsidemargin}{0.0cm}
\setlength{\evensidemargin}{0.0cm}

\usepackage{graphicx}
\usepackage{amssymb,amsmath,amsthm,amscd}
\usepackage{mathrsfs}
\usepackage{enumerate}
\usepackage{lscape}
\usepackage[all]{xy}
\usepackage{verbatim}

\newtheorem{theorem}{Theorem}[section]
\newtheorem{proposition}[theorem]{Proposition}

\newtheorem{lemma}[theorem]{Lemma}

\theoremstyle{definition}
\newtheorem{definition}[theorem]{Definition}

\newtheorem{remark}[theorem]{Remark}
\newtheorem{assumption}[theorem]{Assumption}

\numberwithin{equation}{section}

\newcommand{\soplus}{\mathop{\mbox{\normalsize$\bigoplus$}}\limits}

\newlength{\mylength}
\setlength{\mylength}{\textwidth}
\addtolength{\mylength}{-20ex}

\newcommand{\nc}{\newcommand}
\newcommand{\ko}{\mathbf{k}}
\nc{\Hom}{\mathrm{Hom}}
\nc{\cl}{\mathrm{cl}}
\nc{\tens}{\mathop\otimes}
\nc{\wl}{\mathsf{P}}
\nc{\rl}{\mathsf{Q}}
\nc{\cm}{\mathsf{A}}
\nc{\s}{\mathsf{s}}
\nc{\Rnorm}{R^{\rm{norm}}}
\nc{\Runiv}{R^{\rm{univ}}}
\nc{\aff}{\mathrm{aff}}
\def\Rep{{\rm Rep}}

\def\g{\mathfrak g}
\def\h{\mathfrak h}
\nc{\Dkl}{D_{k,l}}
\nc{\mqs}{(-q^2)}
\def\Z{\mathbb Z}
\def\C{\mathbb C}
\def\Q{\mathbb Q}
\def\va{\varpi}
\def\seteq{\mathbin{:=}}

\nc\uqpg{U_q'(\g)}

\title[The Denominators of normalized $R$-matrices] {The Denominators of normalized $R$-matrices of
types \\ $A_{2n-1}^{(2)}$, $A_{2n}^{(2)}$, $B_{n}^{(1)}$ and
$D_{n+1}^{(2)}$}

\author[Se-jin Oh]{Se-jin Oh}

\address{Department of Mathematical Sciences, Seoul National University,
Gwanak-ro 599, 
Gwanak-gu, Seoul 151-747, Korea}
\email{sejin092@gmail.com}

\keywords{Quantum affine algebra, Normalized $R$-matrix}

\subjclass[2010] {81R50, 16G, 16T25, 17B37}

\date{\today}

\begin{document}

\begin{abstract} Denominators of normalized $R$-matrices provide important information on finite dimensional integrable representations
over quantum affine algebras, and over quiver Hecke algebras by the generalized quantum affine Schur-Weyl duality functors.
We compute the denominators of all normalized $R$-matrices between fundamental representations of types $A_{2n-1}^{(2)}$, $A_{2n}^{(2)}$, $B_{n}^{(1)}$ and
$D_{n+1}^{(2)}$. Thus we can conclude that the normalized $R$-matrices of types $A_{2n-1}^{(2)}$, $A_{2n}^{(2)}$, $B_{n}^{(1)}$ have only simple poles,
and of type $D_{n+1}^{(2)}$ have double poles under certain conditions.
\end{abstract}

\maketitle

\section*{Introduction}

Let $\g$ be an affine Kac-Moody algebra and $U_q'(\g)$ be the quantum affine algebra corresponding to $\g$.
The finite dimensional integrable representations over $U_q'(\g)$ have been investigated by many authors during the past twenty years from different
perspectives (see \cite{AK,CH,CP94,FR,GV,Kas02,Nak}). Among these aspects, we focus on the theory of {\it $R$-matrices} which has deep relationship with
$q$-analysis, operator algebras, conformal field theories, statistical mechanical models, etc.

The purpose of this paper is to compute the denominators of {\it normalized $R$-matrices} between the fundamental representations
$V(\varpi_{i_k})$'s over $U_q'(\g)$. Knowing the denominators is quite crucial to study the finite dimensional integrable
representations by the following theorem:
\bigskip

\noindent
\textbf{Theorem} \cite{AK,Kas02}
Let $M$ be a finite dimensional irreducible integrable $U'_q(\g)$-module $M$. Then, there
exists a finite sequence $$\big( (i_1,a_1),\ldots, (i_l,a_l)\big) \text{ in }
( \{ 1,2,\cdots,n \} \times \ko^\times)^l$$ such that
\begin{itemize}
\item $d_{i_k,i_{k'}}(a_{k'}/a_k) \not=0$ for $1\le k<k'\le l$ and
\item $M$ is isomorphic to the head of $\tens_{i=1}^{l}V(\varpi_{i_k})_{a_k}$.
\end{itemize}
Moreover, such a sequence $\left((i_1,a_1),\ldots, (i_l,a_l)\right)$ is
unique up to permutation. Here $\ko =\overline{\C(q)} \subset \cup_{m >0} \C((q^{1/m}))$ and $d_{i_k,i_{k'}}(z) \in \ko[z]$
denotes the denominator of the normalized $R$-matrix
$$\Rnorm_{i_k,i_{k'}}(z)\seteq\Rnorm_{V(\va_{i_k}),V(\va_{i_{k'}})}(z) \colon V(\va_{i_k}) \otimes  V(\va_{i_{k'}})_z \to \ko(z) \otimes_{\ko[z^{\pm 1}]} \big(V(\va_{i_{k'}})_z \otimes V(\va_{i_k}) \big)$$
satisfying
$$d_{i_k,i_{k'}}(z)\Rnorm_{i_k,i_{k'}}(z)\big(  V(\va_{i_k}) \otimes  V(\va_{i_{k'}})_z  \big) \subset V(\va_{i_{k'}})_z \otimes V(\va_{i_k}).$$
\medskip

Thus the study of denominators is one of the first step to study the category $\mathscr{C}_\g$ consisting of finite dimensional integrable representations over
$U_q'(\g)$.

On the other hand, Kang, Kashiwara and Kim \cite{KKK13a,KKK13b}
recently constructed {\it the quantum affine Schur-Weyl duality
functor} $\mathcal{F}$ by observing zeros of denominators of
normalized $R$-matrices. The way of constructing $\mathcal{F}$ can
be described as follows: Let $\{ V_s \}_{s \in \mathcal{S}}$ be a
family of fundamental representations over $U'_q(\g)$. For an index
set $J$ and two maps $X:J \to \ko^\times, \ s: J \to \mathcal{S}$,
we can define a quiver $Q^J=(Q^J_0,Q^J_1)$ associated with $(J,X,s)$
as (i:vertices) $Q^J_0= J$, (ii:arrows) for $i,j \in J$, we put
$\mathtt{d}_{ij}$ many arrows from $i$ to $j$, where
$\mathtt{d}_{ij}$ is the order of the zero of
$d_{V_{s(i)},V_{s(j)}}(z)$ at $X(j)/X(i)$.

Then we obtain a symmetric Cartan matrix $\cm^J=(a^J_{ij})_{i,j \in J}$ associated with
$(J,X,s)$ by
\begin{align*}
a^J_{ij} = 2 \quad \text{ if } i =j \quad \text{ and } \quad a^J_{ij} = -\mathtt{d}_{ij}-\mathtt{d}_{ji} \quad \text{ if } i \ne j.
\end{align*}
Let $R^J$ be the quiver Hecke algebras associated with the symmetric Cartan matrix $\cm^J$ and the parameters (\cite{KL09,KL11,R08})
$$ \mathcal{Q}_{i,j}(u,v) =(u-v)^{\mathtt{d}_{ij}}(v-u)^{\mathtt{d}_{ji}} \ \ \text{ if } i \ne j \quad \text{ and } \quad \mathcal{Q}_{i,i}(u,v)=0
\ \ \text{ for all } i \in J.$$

\noindent
\textbf{Theorem} \cite{KKK13a}
 There exists a functor
$$\mathcal{F} : \Rep(R^J) \rightarrow \mathscr{C}_\g $$
where $\Rep(R^J)$ denotes the category of finite dimensional representations over $R^J$.
Moreover, the functor enjoys the following properties:
\begin{enumerate}
\item[({\rm a})] $\mathcal{F}$ is a tensor functor; that is, there exist $U_q'(\g)$-module isomorphisms
$$\mathcal{F}(R^J(0)) \simeq \ko \quad \text{ and } \quad \mathcal{F}(M_1 \circ M_2) \simeq \mathcal{F}(M_1) \tens \mathcal{F}(M_2)$$
for any $M_1, M_2 \in \Rep(R^J)$.
\item[({\rm b})] If the Cartan matrix $\cm^J$ is of type $A_n(n \ge 1)$, $D_n(n \ge 4)$, $E_6$, $E_7$ or $E_8$, then
the functor $\mathcal{F}$ is exact.
\end{enumerate}
\medskip

Thus the  generalized quantum affine Schur-Weyl duality functor provides the way of investigating the category $\mathscr{C}_\g $
via the category $\Rep(R^J)$ and the other way around (see \cite{KKKO14}).

Note that $\cm^J$ depends on the choice of $(J,X,s)$ and the denominators. Hence one may expect various exact functors
defined on $\Rep(R^J)$ for a fixed algebra $R^J$. In the forthcoming papers by the author and his collaborators
(\cite{KKKO14A,KKKO14D}), they will consider such situations, and the denominator formulas given in this paper will play
an important role.

The denominators of all normalized $R$-matrices $\Rnorm_{k,l}(z)$
for $A^{(1)}_{n}$, $C^{(1)}_{n}$ and $D^{(1)}_{n}$ were studied in \cite{AK,DO94,KKK13b} and
the denominators of normalized $R$-matrix $\Rnorm_{1,1}(z)$ (resp. $\Rnorm_{n,n}(z)$)
between vector representations (resp. spin representations) for all classical affine types are given in \cite{KMN2,Okado90}.
On the other hand, the explicit forms of normalized $R$-matrix $\Rnorm_{1,1}(z)$ for all classical affine types were studied in \cite{DO96,HYZ,J86,JMO00}.
With these results, we will compute the denominators $d_{k,l}(z)$ of all normalized $R$-matrices $\Rnorm_{k,l}(z)$ by employing
the framework given in \cite[Appendix A]{KKK13b}.

Our main results are
\begin{align*}
 d_{k,l}(z) =  \prod_{s=1}^{\min(k,l)} (z^t-(-q^t)^{|k-l|+2s})(z^t-(p^*)^t(-q^t)^{2s-k-l})
\end{align*}
if $V(\va_k)$ and $V(\va_l)$ are not spin representations, and
\begin{align*}
\begin{array}{ll}
   d_{k,n}(z) = \prod_{s=1}^{k}(z-(-1)^{n+k}q_s^{2n-2k-1+4s}) & \text{if } \g=B^{(1)}_{n} \text{ and } k < n, \\
   d_{k,n}(z)=\prod_{s=1}^{k}(z^2+(-q^{2})^{n-k+2s}) & \text{if } \g=D^{(2)}_{n+1} \text{ and } k < n. \end{array}
\end{align*}
Here,
$$t= \begin{cases} 2 & \text{ if } \g=D^{(2)}_{n+1}, \\ 1 & \text{otherwise}, \end{cases}, \ \ q_s^2=q \ \ \text{ and }
p^* \seteq (-1)^{\langle \rho^\vee ,\delta \rangle}q^{(\rho,\delta)}  $$
for the {\it null root} $\delta$ (see \eqref{eq: dual}).
Hence we can conclude that
\begin{enumerate}
\item[({\rm a})] $\Rnorm_{k,l}(z)$ of $A_{2n-1}^{(2)}$, $A_{2n}^{(2)}$ or $B_{n}^{(1)}$ has only simple poles,
\item[({\rm b})] $\Rnorm_{k,l}(z)$ of $D_{n+1}^{(2)}$ has a double pole at $z=\mqs^{s/2}$ if
$$ 2 \le k,l \le n-1, \ k+l > n, \ 2n+2-k-l \le s \le k+l \text{ and } s \equiv k+l \ {\rm mod} \ 2,$$
\item[({\rm c})] $\Rnorm_{k,l}(z)$ has a pole at $\pm(-q^t)^{\ell/t}$ only if $k \in \Z$ such that $2 \le \ell \le (\rho,\delta)$ (see \cite{FM}).
\end{enumerate}

This paper is organized as follows. In the first section, we recall
the notion of quantum affine algebras and $R$-matrices, briefly. In
the next section, we give the $U_q'(\g)$-module structure of the vector
representations and spin representations over $U_q'(\g)$. In
the third section, we study morphisms in
$\Hom_{U_q'(\g)}(V(\va_i)_a\tens V(\va_j)_b,V(\va_k)_c)$, called the {\it Dorey's} type morphisms. After that, we prove the existence of certain surjective homomorphisms which can be understood
as $D^{(2)}_{n+1}$-analogue of \cite[Lemma A.3.2]{KKK13b}. In the last section, we propose the general frame work for
computing the denominators, which is originated from \cite[Appendix A]{KKK13b}.
Then we compute $d_{1,n}(z)$ for $\g=D^{(2)}_{n+1}$ and the unknown denominators $d_{k,l}(z)$ of normalized $R$-matrices
for $\g=A^{(2)}_{2n-1}$, $A^{(2)}_{2n}$, $B^{(1)}_{n}$ and $D^{(2)}_{n+1}$,
by using the results in the previous sections. In the appendix, we provide a
table of $d_{k,l}(z)$ for all classical affine types for reader's
convenience.

\bigskip

\noindent
{\bf Acknowledgements.} The author would like to express his sincere gratitude to
Professor Seok-Jin Kang, Professor Masaki Kashiwara and Myungho Kim for many fruitful discussions.
The author gratefully acknowledge the hospitality of RIMS (Kyoto) during his visit in 2013 and 2014.

\section{Quantum affine algebras and $R$-matrices}
In this section, we briefly recall the backgrounds and theories on quantum affine algebras, their finite dimensional integral representations and $R$-matrices.
We refer to \cite{AK,KKK13a,Kas02} for precise statements and definitions.

\subsection{Quantum affine algebras and their representations.} Let $I = \{0,1, \ldots, n \}$ be a set of indices and set $I_0 \seteq I \setminus \{ 0 \}$. An {\it affine Cartan datum} is
a quadruple $(\cm,\wl,\Pi,\Pi^\vee)$ consisting of
\begin{enumerate}
\item[({\rm a})] a matrix $\cm$ of corank $1$, called the {\it affine Cartan matrix} satisfying
$$ ({\rm i}) \ a_{ii}=2 \ (i \in I), \quad ({\rm ii}) \ a_{ij} \in \Z_{\le 0}, \quad  ({\rm iii}) \ a_{ij}=0 \text{ if } a_{ji}=0$$
with $\mathsf{D}= {\rm diag}(\mathsf{d}_i \in \Z_{>0}  \mid i \in I)$ making $\mathsf{D}\cm$ symmetric,
\item[({\rm b})] a free abelian group $\wl$ of rank $n+2$, called the {\it weight lattice},
\item[({\rm c})] $\Pi = \{ \alpha_i \mid i\in I \} \subset \wl$, called the set of {\it simple roots},
\item[({\rm d})] $\Pi^{\vee} = \{ h_i \mid i\in I\} \subset \wl^{\vee} := \Hom( \wl, \Z )$, called the set of {\it simple coroots},
\end{enumerate}
which satisfy
\begin{itemize}
\item[(1)] $\langle h_i, \alpha_j \rangle  = a_{ij}$ for all $i,j\in I$,
\item[(2)] $\Pi$ and $\Pi^{\vee}$ are linearly independent sets,
\item[(3)] for each $i \in I$, there exists $\Lambda_i \in \wl$ such that $\langle h_i, \Lambda_j \rangle =\delta_{ij}$ for all $j \in I$.
\end{itemize}
We set $\rl = \bigoplus_{i \in I} \Z \alpha_i$, $\rl_+ = \bigoplus_{i \in I} \Z_{\ge 0} \alpha_i$, $\rl^\vee = \bigoplus_{i \in I} \Z h_i$ and
$\rl^\vee_+ = \bigoplus_{i \in I} \Z_{\ge 0} h_i$.
We choose the {\it imaginary root} $\delta=\sum_{i \in I}\mathsf{a}_i \alpha_i \in \rl_+$ and the {\it center} $c=\sum_{i \in I} \mathsf{c}_ih_i \in \rl^\vee_+$
such that (\cite[Chapter 4]{Kac})
$$ \{ \lambda \in \rl \ | \ \langle h_i,\lambda \rangle =0 \text{ for every } i \in I \} =\Z \delta  \text{ and }
\{ h \in \rl^\vee \ | \ \langle h,\alpha_i \rangle =0 \text{ for every } i \in I \} =\Z c.$$

Set $\h = \Q \tens_\Z \wl^\vee$.
Then there exists a symmetric bilinear form $( \ , \ )$ on $\h^*$ satisfying
$$ \langle h_i,\lambda \rangle = \dfrac{  2(\alpha_i,\lambda)}{(\alpha_i,\alpha_i)} \quad \text{ for any } i \in I \text{ and } \lambda \in \h^*.$$
We normalize the bilinear form by
$$ \langle c,\lambda \rangle = (\delta,\lambda) \quad \text{ for any } \lambda \in \h^*.$$

Let us denote by $\g$ the affine Kac-Moody Lie algebra associated with $(\cm,\wl,\Pi,\Pi^\vee)$ and by $\mathsf{W}$ the Weyl group of $\g$, generated by
$(\mathsf{s}_i)_{i \in I}$. We define $\g_0$ the subalgebra of $\g$ generated by the chevalley generators $e_i,f_i,$ and
$h_i$ for $i \in I_0$. Then $\g_0$ is a finite dimensional simple Lie algebra.

Let $\gamma$ be the smallest positive integer such that
$$ \gamma(\alpha_i,\alpha_i)/2 \in \Z \quad \text{ for any } i \in I.$$

Let $q$ be an indeterminate. For $m,n \in \Z_{\ge 0}$ and $i\in I$, we define
$q_i = q^{(\alpha_i,\alpha_i)/2}$ and
\begin{equation*}
 \begin{aligned}
 \ &[n]_i =\frac{ q^n_{i} - q^{-n}_{i} }{ q_{i} - q^{-1}_{i} },
 \ &[n]_i! = \prod^{n}_{k=1} [k]_i ,
 \ &\left[\begin{matrix}m \\ n\\ \end{matrix} \right]_i=  \frac{ [m]_i! }{[m-n]_i! [n]_i! }.
 \end{aligned}
\end{equation*}

\begin{definition} \label{Def: GKM}
The {\em quantum affine algebra} $U_q(\g)$ associated
with $(\cm,\wl,\Pi,\Pi^{\vee})$ is the associative
algebra over $\Q(q^{1/\gamma})$ with $1$ generated by $e_i,f_i$ $(i \in I)$ and
$q^{h}$ $(h \in \gamma^{-1}\wl^{\vee})$ satisfying following relations:
\begin{enumerate}
  \item  $q^0=1, q^{h} q^{h'}=q^{h+h'} $ for $ h,h' \in \gamma^{-1}\wl^{\vee},$
  \item  $q^{h}e_i q^{-h}= q^{\langle h, \alpha_i \rangle} e_i,
          \ q^{h}f_i q^{-h} = q^{-\langle h, \alpha_i \rangle }f_i$ for $h \in \gamma^{-1}\wl^{\vee}, i \in I$,
  \item  $e_if_j - f_je_i =  \delta_{ij} \dfrac{K_i -K^{-1}_i}{q_i- q^{-1}_i }, \ \ \text{ where } K_i=q_i^{ h_i},$
  \item  $\displaystyle \sum^{1-a_{ij}}_{k=0}
  (-1)^ke^{(1-a_{ij}-k)}_i e_j e^{(k)}_i =  \sum^{1-a_{ij}}_{k=0} (-1)^k
  f^{(1-a_{ij}-k)}_i f_jf^{(k)}_i=0 \quad \text{ for }  i \ne j, $
\end{enumerate}
where $e_i^{(k)}=e_i^k/[k]_i!$ and $f_i^{(k)}=f_i^k/[k]_i!$.
\end{definition}

We denote by $U_q'(\g)$ the subalgebra of $U_q(\g)$ generated by
$e_i,f_i,K^{\pm 1}_i$ $(i \in I)$ and we call it {\it also} the quantum
affine algebra. Throughout this paper, we mainly deal with
$U_q'(\g)$.

For $\uqpg$-modules $M$ and $N$, $M \otimes N$ becomes a $\uqpg$-module by the coproduct $\Delta$ of $\uqpg$:
$$ \Delta(q^h)=q^h \tens q^h, \ \ \Delta(e_i)=e_i \tens K_i^{-1}+1 \tens e_i, \ \Delta(f_i)=f_i \tens 1 +K_i \tens f_i.$$

Set $\wl_\cl \seteq \wl / \Z \delta$ and $\cl \colon \wl \to \wl_\cl$ as the canonical projection.

We say that a $\uqpg$-module $M$ is {\it integrable} provided that
\begin{enumerate}
\item[({\rm a})] $M$ decomposes into $\wl_\cl$-weight spaces; that is,
$$ M = \soplus_{ \mu \in \wl_\cl} M_\mu, $$
where $M_\mu \seteq \{ v \in M \ | \ K_i v = q^{\langle h_i, \mu \rangle } v \}$,
\item[({\rm b})] $e_i$ and $f_i$ $(i \in I)$ act on $M$ nilpotently.
\end{enumerate}

For $i \in I_0$, {\it the level $0$ fundamental weight} $\va_i$ is defined by
$$ \va_i \seteq {\rm gcd}(\mathsf{c}_0,\mathsf{c}_i)^{-1}(\mathsf{c}_0\Lambda_i-\mathsf{c}_i\Lambda_0) \in \wl.$$
Then $\{ \cl(\va_i) \ | \ i \in I_0 \}$ forms a basis for the space of {\it classical integral weight level $0$},
denoted by $\wl^0_\cl$, which is defined as follows:
$$ \wl^0_\cl = \{ \lambda \in \wl_\cl \ | \ \langle c,\lambda \rangle =0  \}.$$
The Weyl group $\mathsf{W}_0$ of $\g_0$, generated by $(\mathsf{s}_i)_{i \in I_0}$, acts on $\wl^0_\cl$ (see \cite[\S 1.2]{AK}).
We denote by $w_0$ the longest element of $\mathsf{W}_0$.

\begin{definition} \cite[\S 1.3]{AK} \label{def: fundamental module}
For $i \in I_0$, the {\it $i$th fundamental module} is a unique finite dimensional integrable $U_q'(\g)$-module $V(\va_i)$ satisfying the following properties:
\begin{enumerate}
\item[(1)] The weights of $V(\va_i)$ are contained in the convex hull of $\mathsf{W}_0\cl(\va_i)$.
\item[(2)] $V(\va_i)_{\cl(\va_i)}=\C(q) \mathsf{v}_{\va_i}$. (We call the vector $\mathsf{v}_{\va_i}$ a {\it dominant integral weight vector}.)
\item[(3)] For any $\mu \in \mathsf{W}_0\cl(\va_i)$, we can associate a non-zero vector $u_\mu$, called an {\it extremal vector of weight $\mu$}, such that
\begin{equation} \label{eq: extremal}
\mathcal{S}_i\cdot u_\mu \seteq  u_{\s_i\mu}= \begin{cases} f_i^{(\langle h_i, \mu \rangle)}u_\mu & \text{ if } \langle h_i, \mu \rangle \ge 0, \\
e_i^{(-\langle h_i, \mu \rangle)}u_\mu & \text{ if } \langle h_i, \mu \rangle \le 0, \end{cases} \quad \text{ for any $i \in I$.}
\end{equation}
\item[(4)] $\mathsf{v}_{\va_i}$ generates $V(\va_i)$ as a $\uqpg$-module.
\end{enumerate}
\end{definition}

Let $\ko$ be an algebraic closure of $\C(q)$ in $\cup_{m >0}\C((q^{1/m}))$. When we deal with $\uqpg$-modules, we regard the base field as $\ko$.

For a $\uqpg$-module $M$, we denote by ${}^*M$ the {\it right dual} and $M^*$ the {\it left dual} of $M$, if there exist $U_q'(\g)$-homomorphisms
$$ M^* \otimes M \overset{{\rm tr}}{\longrightarrow} \ko, \quad \ko \longrightarrow  M \otimes M^*
\quad \text{and} \quad  M \otimes {}^*M \overset{{\rm tr}}{\longrightarrow} \ko, \quad \ko \longrightarrow  M{}^* \otimes M.$$

Recall that $V(\va_i)$ is finite dimensional and has the right dual and left dual as follows:
\begin{align} \label{eq: dual}
V(\varpi_i)^*\seteq  V(\varpi_{i^*})_{(p^*)^{-1}}, \ {}^*V(\varpi_i)\seteq  V(\varpi_{i^*})_{p^*} \ \  \text{ and } \ \
p^* \seteq (-1)^{\langle \rho^\vee ,\delta \rangle}q^{(\rho,\delta)}
\end{align}
where
\begin{itemize}
\item $^*$ is the involution of $I_0$ defined by the image of $\va_i$ under the action $w_0$; i.e., $w_0(\va_i)=-\va_{i^*}$,
\item $\rho$ is defined by  $\langle h_i,\rho \rangle=1$ and $\rho^\vee$ is defined by $\langle \rho^\vee,\alpha_i  \rangle=1$ for all $i \in I$.
\end{itemize}

\medskip

We call an integrable $U_q'(\g)$-module $M$ {\it good} if  $M$
satisfies certain properties. In this paper, the whole definition of the good module is not needed. Thus we refer
\cite{Kas02} for the precise definition of good module. However, we
would like to emphasize one condition of the good module:

\smallskip

A good module $M$ contains the unique (up to constant) weight vector $v_M$ of weight $\lambda$, such that
$$   {\rm wt}(M) \subset \lambda + \sum_{i \in I_0} \Z_{\ge 0} \cl(\alpha_i).$$
We call $v_M$ the {\it dominant extremal weight vector} and $\lambda$ {\it dominant extremal weight}.
For instance, the $i$th fundamental representation
is a good module.

\medskip

For an indeterminate $z$
and a $\uqpg$-module $M$, let us denote by $M_z= \{ u_z \ | \ u \in M \}$
the $\uqpg$-module $\ko[z^{\pm 1}]\otimes M$ with the action of $\uqpg$ given by
$$e_i(u_z)=z^{\delta_{i,0}}(e_iu)_z, \quad
f_i(u_z)=z^{-\delta_{i,0}}(f_iu)_z, \quad
K_i(u_z)=(K_iu)_z.
$$

{\it We sometimes use the notation $u$ for $u_z$ to simplify equations.} (For example, see the proof of Proposition \ref{prop: akl})

\subsection{ Normalized and universal $R$-matrices.}
We call a $\ko[z^{\pm 1}] \otimes \uqpg$-module homomorphism between $M \otimes N_z$ and $N_z \otimes M$ as an {\it intertwiner}.
It is known that, for finite dimensional integral $\uqpg$-modules $M$ and $N$,
there exists an intertwiner
$$\Runiv_{M,N}(z): M \otimes N_z \to N_z \otimes M$$
which satisfies
\begin{align}\label{eq: Runiv property}
\Runiv_{M,N\otimes N'}(z)=(N_z \otimes \Runiv_{M,N'}(z)) \circ (\Runiv_{M,N}(z)\otimes N'_z).
\end{align}
We call $\Runiv_{M,N}$ the {\it universal $R$-matrix} \cite{FR92}.

\begin{definition}
For good modules $M$ and $N$, the {\it normalized $R$-matrix} $\Rnorm_{M,N}$ is the $\uqpg$-module
homomorphism
\begin{equation} \label{Def: R(MN)}
\Rnorm_{M,N}: M_{z_M} \otimes N_{z_N} \to \ko(z_M,z_N) \otimes_{\ko[z_M^{\pm 1},z_N^{\pm 1}]}
 N_{z_N} \otimes M_{z_M}
\end{equation}
which satisfies
$$ \Rnorm_{M,N} \circ z_M = z_M \circ \Rnorm_{M,N}, \ \Rnorm_{M,N} \circ z_N = z_N \circ \Rnorm_{M,N} \
\text{ and } \ \Rnorm_{M,N}(\mathsf{v}_{M}\otimes \mathsf{v}_{N})=\mathsf{v}_{N} \otimes \mathsf{v}_{M}.$$
\end{definition}

\cite[Corollary 2.5]{AK} tells that, for good modules $M$ and $N$
$$\Hom_{\ko(z)\otimes\uqpg}( M \otimes N_z, N_z \otimes M) \simeq \ko(z),$$
and hence there exists $a_{M,N}(z) \in \ko(z)$ such that
\begin{equation}\label{Def: a(MN)}
\Runiv_{M,N}(z)=a_{M,N}(z)\Rnorm_{M,N}(z).
\end{equation}

Note that $$\Rnorm_{M,N}(z)(M \otimes N_z) \subset \ko(z) \otimes_{\ko[z^{\pm 1}]}(N_z \otimes M) $$
and there exists a unique monic polynomial $d_{M,N}(z) \in \ko[z]$ such that
\begin{equation}\label{Def: d(MN)}
d_{M,N}(z)\Rnorm_{M,N}(z)(M \otimes N_z) \subset (N_z \otimes M).
\end{equation}
We call $d_{M,N}(u)$ the {\it denominator} of $\Rnorm_{M,N}(z)$.

\begin{lemma}[{\cite[Lemma C.15]{AK}}] \label{lem:dvw avw}
Let $V', V'', V$ and $W$ be irreducible $U_q'(\g)$-modules. Assume that we have a surjective $U_q'(\g)$-homomorphism
$$ V'\otimes V'' \twoheadrightarrow V.$$ Then we have
  \begin{align*}
   \dfrac{ d_{W,V'}(z) d_{W,V''}(z) a_{W,V}(z)}{d_{W,V}(z)a_{W,V'}(z)a_{W,V''}(z)}
   \quad \text{and} &\quad
   \dfrac{ d_{V',W}(z) d_{V'',W}(z) a_{V,W}(z)}{d_{V,W}(z)a_{V',W}(z)a_{V'',W}(z)} \in \ko[z^{\pm 1}].
  \end{align*}
\end{lemma}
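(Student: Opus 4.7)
The plan is to combine the tensor product factorization \eqref{eq: Runiv property} of the universal $R$-matrix with the naturality of $\Runiv_{W,-}$ applied to the given surjection $\phi \colon V'\otimes V''\twoheadrightarrow V$, and then translate the resulting identity into a divisibility statement for $d_{W,V}(z)$. First, substituting $\Runiv_{W,V'}(z)=a_{W,V'}(z)\Rnorm_{W,V'}(z)$ and the analogous expression for $V''$ into \eqref{eq: Runiv property} yields
$$\Runiv_{W,V'\otimes V''}(z) = a_{W,V'}(z)\,a_{W,V''}(z)\,\bigl(V'_z\otimes\Rnorm_{W,V''}(z)\bigr)\circ\bigl(\Rnorm_{W,V'}(z)\otimes V''_z\bigr).$$
Multiplying both sides by $d_{W,V'}(z)\,d_{W,V''}(z)$ and invoking \eqref{Def: d(MN)} for $\Rnorm_{W,V'}$ and $\Rnorm_{W,V''}$, the composite on the right becomes an honest $\ko[z^{\pm 1}]$-linear $\uqpg$-homomorphism $F(z)\colon W\otimes(V'\otimes V'')_z\to(V'\otimes V'')_z\otimes W$.

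Next, naturality of $\Runiv_{W,-}(z)$ with respect to $\phi$ produces the commuting square
$$(\phi_z\otimes W)\circ\Runiv_{W,V'\otimes V''}(z) = \Runiv_{W,V}(z)\circ(W\otimes\phi_z) = a_{W,V}(z)\,\Rnorm_{W,V}(z)\circ(W\otimes\phi_z).$$
Equating the two expressions for $(\phi_z\otimes W)\circ\Runiv_{W,V'\otimes V''}(z)$ and rearranging gives
$$g(z)\,\Rnorm_{W,V}(z)\circ(W\otimes\phi_z) = (\phi_z\otimes W)\circ F(z), \qquad g(z)\seteq\dfrac{d_{W,V'}(z)\,d_{W,V''}(z)\,a_{W,V}(z)}{a_{W,V'}(z)\,a_{W,V''}(z)}.$$
The right-hand side takes values in $V_z\otimes W$, and since $\phi$ is surjective the induced map $W\otimes\phi_z$ surjects onto $W\otimes V_z$ over $\ko[z^{\pm 1}]$. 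Hence $g(z)\,\Rnorm_{W,V}(z)$ sends the lattice $W\otimes V_z$ into $V_z\otimes W$.

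Finally, by the uniqueness clause of \eqref{Def: d(MN)}, the monic polynomial $d_{W,V}(z)$ is the minimal clearing denominator of $\Rnorm_{W,V}(z)$; equivalently, the matrix entries of $d_{W,V}(z)\Rnorm_{W,V}(z)$ in any chosen bases of $W$ and $V$ generate the unit ideal of $\ko[z^{\pm 1}]$. Consequently any rational function $g(z)\in\ko(z)$ for which $g(z)\,\Rnorm_{W,V}(z)$ is integral must satisfy $g(z)/d_{W,V}(z)\in\ko[z^{\pm 1}]$, which is the first claim. The second identity is proved symmetrically, using the companion factorization $\Runiv_{M\otimes M',N}(z)=(\Runiv_{M,N}(z)\otimes M'_z)\circ(M\otimes\Runiv_{M',N}(z))$ together with naturality of $\Runiv_{-,W}$. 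The principal obstacle is the final divisibility step: one must justify that after scaling by $d_{W,V}(z)$ the entries of $\Rnorm_{W,V}(z)$ have no nontrivial common factor, which is exactly the content of the minimality implicit in the uniqueness of the monic denominator.
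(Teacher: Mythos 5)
Your argument is correct and follows essentially the same route as the source: the paper does not prove this lemma but quotes it from \cite[Lemma C.15]{AK}, where the argument likewise combines the coproduct factorization \eqref{eq: Runiv property} of $\Runiv$ with its naturality under the surjection $V'\otimes V''\twoheadrightarrow V$ and the minimality of the monic denominator. One small point worth making explicit in your final step: that the entries of $d_{W,V}(z)\Rnorm_{W,V}(z)$ generate the unit ideal uses not only minimality but also the normalization $\Rnorm_{W,V}(\mathsf{v}_{W}\otimes \mathsf{v}_{V})=\mathsf{v}_{V}\otimes \mathsf{v}_{W}$, which forces one matrix entry of $d_{W,V}(z)\Rnorm_{W,V}(z)$ to equal $d_{W,V}(z)$ itself, so that any common irreducible factor of all entries would divide $d_{W,V}(z)$ and could be cancelled, contradicting minimality.
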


\section{Vector and spin representation.} \label{Append: vector spin reps}
In this section, we record the $\uqpg$-module structure of
\begin{itemize}
\item $V(\va_1)$, called the {\it vector representation},
\item $V(\varpi_n)$, called the {\it spin representation}, for $\g=B^{(1)}_{n}$ or $D^{(2)}_{n+1}$.
\end{itemize}
As a vector space, the vector representation can be expressed as follows (\cite[Chapter 11]{HK02}):
$$V(\va_1)=\big( \soplus_{j=1}^n \ko v_j  \big) \oplus \big( \soplus_{j=1}^n \ko v_{\overline{j}} \big) \oplus W$$
where
\begin{center}
\begin{tabular}{ | c | c | c | c | c  | } \hline
$\g$ & $A^{(2)}_{2n-1}$ &  $B^{(1)}_{n}$ &  $A^{(2)}_{2n}$ &  $D^{(2)}_{n+1}$ \\ \hline
$W$ & $\emptyset$ & $\ko v_0$  & $\ko v_\emptyset$ & $\ko v_0 \oplus \ko v_\emptyset$ \\ \hline
\end{tabular}
\end{center}

The actions of $e_i$, $f_i$ and $q^h$ are defined by follows:
$$q^h \cdot v_j=q^{\langle h,{\rm wt}(v_j)\rangle}v_j \quad \text{for} \quad h \in \wl^\vee_\cl,$$
\begin{center}
\fontsize{10}{10}\selectfont
\begin{tabular}{ | c | c | c  | } \hline
$\g$ & $e_i$ &  $f_i$ \\ \hline
$A_{2n-1}^{(2)}$ & $e_i v_j =\begin{cases}
    v_i & \text{if $j= i+1$ and $i \neq n$, }\\
    v_{\overline{i+1}} & \text{if $j= \overline{i}$ and $i \neq n$, }\\
    v_n & \text{if $j= \overline{n}$ and $i = n$,}\\
         v_{\overline{2}} & \text{if $j= 1$ and $i = 0$,}\\
         v_{\overline{1}} & \text{if $j= 2$ and $i = 0$,}\\
    0 & \text{otherwise,} \end{cases}$ &
    $f_i v_j =\begin{cases}
    v_{i+1} & \text{if $j= i$ and $i \neq n$,}\\
    v_{\overline{i}} & \text{if $j= \overline{i+1}$ and $i \neq n$,} \\
    v_{\overline{n}} & \text{if $j= n$ and $i = n$,}\\
    v_{1} & \text{if $j= \overline{2}$ and $i = 0$,}\\
    v_{2} & \text{if $j= \overline{1}$ and $i = 0$,}\\
    0 & \text{otherwise},
  \end{cases}$\\ \hline
$A_{2n}^{(2)}$ & $ e_i v_j =\begin{cases}
    v_i & \text{if $j= i+1$ and $i \neq n$, }\\
    v_{\overline{i+1}} & \text{if $j= \overline{i}$ and $i \neq n$, }\\
    v_n & \text{if $j= \overline{n}$ and $i = n$,}\\
         v_{\emptyset} & \text{if $j= 1$ and $i = 0$,}\\
         [2]_{0}v_{\overline{1}} & \text{if $j= \emptyset$ and $i = 0$,}\\
    0 & \text{otherwise},
  \end{cases} $ & $f_i v_j =\begin{cases}
    v_{i+1} & \text{if $j= i$ and $i \neq n$,}\\
    v_{\overline{i}} & \text{if $j= \overline{i+1}$ and $i \neq n$,} \\
    v_{\overline{n}} & \text{if $j= n$ and $i = n$,}\\
    v_{\emptyset} & \text{if $j= \overline{1}$ and $i = 0$,}\\
    [2]_{0}v_{1} & \text{if $j= \emptyset$ and $i = 0$,}\\
    0 & \text{otherwise},
  \end{cases}$ \allowdisplaybreaks  \\ \hline
  $B_{n}^{(1)}$ &  $ e_i v_j =\begin{cases}
    v_i & \text{if $j= i+1$ and $i \neq n$, }\\
    v_{\overline{i+1}} & \text{if $j= \overline{i}$ and $i \neq n$, }\\
    v_0 & \text{if $j= \overline{n}$ and $i = n$,}\\
    [2]_{n}v_n & \text{if $j= 0$ and $i = n$,}\\
         v_{\overline{2}} & \text{if $j= 1$ and $i = 0$,}\\
         v_{\overline{1}} & \text{if $j= 2$ and $i = 0$,}\\
    0 & \text{otherwise},
  \end{cases}$ & $f_i v_j =\begin{cases}
    v_{i+1} & \text{if $j= i$ and $i \neq n$,}\\
    v_{\overline{i}} & \text{if $j= \overline{i+1}$ and $i \neq n$,} \\
    v_{0} & \text{if $j= n$ and $i = n$,}\\
    [2]_{n}v_{\overline{n}} & \text{if $j= 0$ and $i = n$,}\\
    v_{1} & \text{if $j= \overline{2}$ and $i = 0$,}\\
    v_{2} & \text{if $j= \overline{1}$ and $i = 0$,}\\
    0 & \text{otherwise},
  \end{cases}$\\ \hline
$D_{n+1}^{(2)}$ & $ e_i v_j =\begin{cases}
    v_i & \text{if $j= i+1$ and $i \neq n$, }\\
    v_{\overline{i+1}} & \text{if $j= \overline{i}$ and $i \neq n$, }\\
    v_0 & \text{if $j= \overline{n}$ and $i = n$,}\\
    [2]_nv_n & \text{if $j= 0$ and $i = n$,}\\
        v_{\emptyset} & \text{if $j= 1$ and $i = 0$,}\\
         [2]_0v_{\overline{1}} & \text{if $j= \emptyset$ and $i = 0$,}\\
    0 & \text{otherwise},
  \end{cases}$ & $f_i v_j =\begin{cases}
    v_{i+1} & \text{if $j= i$ and $i \neq n$,}\\
    v_{\overline{i}} & \text{if $j= \overline{i+1}$ and $i \neq n$,} \\
    v_{0} & \text{if $j= n$ and $i = n$,}\\
    [2]_nv_{\overline{n}} & \text{if $j= 0$ and $i = n$,}\\
    v_{\emptyset} & \text{if $j= \overline{1}$ and $i = 0$,}\\
    [2]_0v_{1} & \text{if $j= \emptyset$ and $i = 0$,}\\
    0 & \text{otherwise},
  \end{cases}$  \\ \hline
\end{tabular}
\fontsize{11}{11}\selectfont
\end{center}
where $${\rm wt}(v_j)=\epsilon_j, \ {\rm wt}(v_{\overline{j}})=-\epsilon_j \text{ for } j=1,\ldots,n \text{ and } {\rm wt}(v_0)={\rm wt}(v_\emptyset)=0.$$

For $\g=B_{n}^{(1)}$ or $\g=D_{n+1}^{(2)}$, the spin representation $V(\va_n)$ is the $\ko$-vector space with a basis
\begin{align*}
  \mathsf{B}_{{\rm sp}} = \{ (m_1, \ldots, m_n) \, ; \, m_i = + \, \text{or} \, - \}.
\end{align*}
Its $U_q'(\g)$-module structure is given by defining the action of $e_i$, $f_i$ and $q^h$ as follows:
$$q^{h} v = q^{\langle h , {\rm wt}(v)\rangle} v   \quad \text{for $h\in \wl_\cl^\vee$, where
  ${\rm wt}(v) = \frac{1}{2} \sum_{k=1}^n m_k \epsilon_k$,} $$
\begin{center}
\fontsize{10}{10}\selectfont
\begin{tabular}{ | c | c | } \hline
$\g$ & $e_i$,  $f_i$ \\ \hline
  $B_{n}^{(1)}$ &  $e_iv = \begin{cases}
    (m_1,\ldots, \overset{i}{+}, \overset{i+1}{-}, \ldots, m_n)
    & \text{if $i\not=,n$ and $m_i=- $, $m_{i+1}=+$,} \\
    (m_1,\ldots, m_{n-1}, \overset{n}{+})
    &\text{if $i=n$ and $m_{n}=-$,} \\
    (-,-, m_3,\ldots, m_n)
    & \text{if $i=0$ and $m_1=m_2=+$,}\\
    0 & \text{otherwise},
           \end{cases}$ \\ \cline {2-2}
    & $f_iv = \begin{cases}
    (m_1,\ldots, \overset{i}{-}, \overset{i+1}{+}, \ldots, m_n)
    &\text{if $i\not=n$ and $m_i=+$, $m_{i+1}=-$,} \\
    (m_1,\ldots, m_{n-1}, \overset{n}{-})
    & \text{if $i=n$ and $m_{n}=+$,} \\
    (+,+, m_3,\ldots, m_n)
    & \text{if $i=0$ and $m_1=m_2=-$,}\\
    0 & \text{otherwise},
           \end{cases}$ \allowdisplaybreaks \\ \hline
 $D^{(2)}_{n+1}$ & $e_iv = \begin{cases}
    (m_1,\ldots, \overset{i}{+}, \overset{i+1}{-}, \ldots, m_n)
    &\text{if $i\not=,n$ and $m_i=- $, $m_{i+1}=+$,} \\
    (m_1,\ldots, m_{n-1}, \overset{n}{+})
    &\text{if $i=n$ and $m_{n}=-$,} \\
    (-, m_2,\ldots, m_n)
    & \text{if $i=0$ and $m_1=+$,}\\
    0 & \text{otherwise},
           \end{cases}$ \\ \cline {2-2}
 & $f_iv= \begin{cases}
    (m_1,\ldots, \overset{i}{-}, \overset{i+1}{+}, \ldots, m_n)
    &\text{if $i\not=n$ and $m_i=+$, $m_{i+1}=-$,} \\
    (m_1,\ldots, m_{n-1}, \overset{n}{-})
    & \text{if $i=n$ and $m_{n}=+$,} \\
    (+, m_2,\ldots, m_n)
    & \text{if $i=0$ and $m_1=-$,}\\
    0 & \text{otherwise}.
           \end{cases}$ \\ \hline
\end{tabular}
\fontsize{11}{11}\selectfont
\end{center}

\section{Surjective homomorphisms between integrable $\uqpg$-modules } \label{sec: sujective homo}
In this section, we first study the morphisms in
 $${\rm Hom}_{U_q'(\g)}\big( V(\varpi_i)_a
\tens V(\varpi_j)_b, V(\varpi_k)_c \big) \quad \text{ for } i,j,k\in I_0 \text{ and } a,b,c \in \ko^\times.$$
These kinds of morphisms are known as {\it Dorey's type morphisms} and have been investigated in \cite{CP96} for
the classical untwisted affine types $A^{(1)}_{n}$, $B^{(1)}_{n}$,
$C^{(1)}_{n}$ and $D^{(1)}_{n}$. In the last part of this section,
we study the surjective homomorphisms which can be understood
as $D^{(2)}_{n+1}$-analogue of the surjective homomorphisms given in
\cite[(A.17)]{KKK13b}
\medskip

Hereafter, we will use the following convention frequently:
\begin{center}
For a statement $P$, $\delta(P)$ is $1$ if $P$ is true and $0$ if $P$ is false.
\end{center}

\medskip

By the result on $B^{(1)}_n$ in \cite{CP96}, {\it it suffices to consider when $\g=A^{(2)}_{2n-1}$ $(n\ge 3)$, $A^{(2)}_{2n}$ $(n\ge 2)$ and $D^{(2)}_{n+1}$} $(n\ge 2)$.

\medskip

The finite Dynkin diagrams of $\g_0$ associated with $\g$ are given as follows:
$$
C_n\colon\xymatrix@R=3ex{ *{\circ}<3pt> \ar@{-}[r]_<{1 \
}^{\epsilon_1-\epsilon_2 \qquad \ } & {} \ar@{.}[r] & *{\circ}<3pt>
\ar@{-}[r]_>{\,\,\,\ n-1}^{ \qquad \  \epsilon_{n-1}-\epsilon_{n} }
&*{\circ}<3pt>\ar@{=>}[r]_>{\,\,\,\,n}^{ \qquad \ \  \epsilon_n }
&*{\circ}<3pt> } \ (A^{(2)}_{2n-1}, \ A^{(2)}_{2n}) \quad B_n\colon\xymatrix@R=3ex{ *{\circ}<3pt>
\ar@{-}[r]_<{1 \ }^{\epsilon_1-\epsilon_2 \qquad \ }   & {}
\ar@{.}[r] & *{\circ}<3pt> \ar@{-}[r]_>{\,\,\,\ n-1}^{ \qquad \
\epsilon_{n-1}-\epsilon_{n} }
&*{\circ}<3pt>\ar@{<=}[r]_>{\,\,\,\,n}^{ \qquad \ 2\epsilon_n }
&*{\circ}<3pt> } \ (D^{(2)}_{n+1}).
$$
We denote by $V_0(\va_i)$ for $i \in I_0$, the highest weight $U_q(\g_0)$-module with the highest weight $\va_i$.
\medskip

Throughout this paper, we set
\begin{equation}\label{def: t}
t= \begin{cases} 2 & \text{ if } \g=D^{(2)}_{n+1}, \\ 1 & \text{ otherwise,} \end{cases} \quad \text{ and } \quad
\vartheta= \begin{cases} 1 & \text{ if } \g=B^{(1)}_{n} \text{ or } D^{(2)}_{n+1}, \\ 0 & \text{ otherwise.} \end{cases}
\end{equation}

\subsection{$i+j=k \le n-\vartheta$.}\label{subsec: i+j=k}

Recall that there exists an injective $U_q(\g_0)$-module homomorphism (see. \cite[Chapter 8]{HK02})
$$ \Phi_{i,j}:V_0(\va_{i+j}) \rightarrowtail V_0(\va_{i}) \otimes V_0(\va_{j})  \quad \text{ for } \quad i+j \le n -\vartheta$$
given by
\begin{align} \label{map: injection g_0}
u_\lambda \longmapsto v_\lambda= \sum_{\lambda=\mu+\xi}
C_{\mu,\xi}^{\lambda} u_\mu \otimes u_\xi \qquad (C_{\mu,\xi}^{\lambda} \in \ko)
\end{align}
where $\lambda \in \mathsf{W}_0\cdot\va_{i+j}$ and $\mu$ (resp. $\xi$) runs
over the elements in $\mathsf{W}_0\cdot\va_{i}$ (resp. $\mathsf{W}_0\cdot\va_{j}$).

For a positive integer $l \le n -\vartheta$, we sometimes write
$\lambda \in {\rm wt}(V_0(\va_{l})) $ as a sequence $(\lambda_1,
\ldots, \lambda_n) \in \{ 1,0,-1 \}^{n}$ such that
$\lambda=\sum_{k=1}^{n} \lambda_k \epsilon_k$.

In \eqref{map: injection g_0}, since $\Phi_{i,j}$ is a $U_q(\g_0)$-homomorphism and $V(\va_{i+j})$ is generated by
$u_{\va_{i+j}}$, we can observe that
\begin{equation} \label{eq: multi 0}
\text{$\lambda_k \ge 0$ implies that $\mu_k, \xi_k \ge 0$ and
$\lambda_k \le 0$ implies that $\mu_k, \xi_k \le 0$.}
\end{equation}
Since $\lambda_k \in \{ 1,0,-1\}$, we can conclude that
\begin{align} \label{eq: mu_k xi_k=0}
\mu_k \xi_k=0 \quad \text{ for all } \quad 1 \le k \le n.
\end{align}
From the observation \eqref{eq: multi 0},  $C_{\mu,\xi}^{\lambda}$ must be the same as $C_{\s_k\mu,\s_k\xi}^{\s_k\lambda}$ whenever
$\langle h_k ,\lambda \rangle \neq 0$.

\begin{proposition}\label{prop: c}
Set
\begin{equation} \label{eq: coeff c}
\begin{aligned}c_{\mu,\xi}^{\lambda} & = \# \{ (a,b) \ | \ a<b, \
(\mu_a,\xi_a)=(0,1), \ \mu_b \neq 0 \} \\
& \qquad \qquad \qquad + \# \{ (a,b) \ | \ a<b, \
(\mu_a,\xi_a)=(-1,0), \ \xi_b \ne 0 \}.
\end{aligned}
\end{equation}
Then the $C_{\mu,\xi}^{\lambda}$ in \eqref{map: injection g_0} is given as follows:
$$C_{\mu,\xi}^{\lambda} =(-q_1)^{c_{\mu,\xi}^{\lambda}}.$$
\end{proposition}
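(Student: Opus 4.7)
The plan is a descent induction on $\lambda \in \mathsf{W}_0 \cdot \va_{i+j}$, propagating the formula from the highest weight via the quantum Leibniz rule for the coproduct and the extremal vector relation \eqref{eq: extremal}.

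\textbf{Base case.} For $\lambda = \va_{i+j} = \epsilon_1 + \cdots + \epsilon_{i+j}$, the conditions \eqref{eq: multi 0} and \eqref{eq: mu_k xi_k=0} force $(\mu,\xi) = (\va_i, \va_j)$ as the unique decomposition, and the normalization $\Phi_{i,j}(u_{\va_{i+j}}) = u_{\va_i} \otimes u_{\va_j}$ dictates $C^{\va_{i+j}}_{\va_i, \va_j} = 1$. The count $c^{\va_{i+j}}_{\va_i, \va_j}$ vanishes: positions $a$ with $(\mu_a,\xi_a) = (0,1)$ satisfy $a > i$, while $\mu_b \neq 0$ forces $b \le i < a$, so no pair $(a,b)$ with $a < b$ contributes; the $(-1,0)$ pattern is vacuous since $\mu$ has no $-1$ entries. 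Hence $(-q_1)^0 = 1$, matching the formula.

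\textbf{Inductive step.} Assume the claim for $v_\lambda$, and take $k \in I_0$ with $m := \langle h_k, \lambda\rangle > 0$. Then \eqref{eq: extremal} gives $v_{\s_k \lambda} = f_k^{(m)} v_\lambda$, and expanding via the quantum Leibniz rule
$$f_k^{(m)}(u_\mu \otimes u_\xi) = \sum_{a+b=m} q_k^{b(\langle h_k, \mu\rangle - a)} \, f_k^{(b)} u_\mu \otimes f_k^{(a)} u_\xi$$
one matches the coefficient of each basis vector $u_{\mu'} \otimes u_{\xi'}$ with $\mu' + \xi' = \s_k \lambda$. Using the action tables for $V_0(\va_l)$ from Section~\ref{Append: vector spin reps} (where, for $k < n$, $f_k$ simply swaps the $k$-th and $(k+1)$-st entries when they form a descent), this yields a one-step recursion $C^{\s_k \lambda}_{\mu', \xi'} = (-q_1)^{\epsilon} C^{\lambda}_{\mu, \xi}$ for a specific integer $\epsilon$ determined by the shape of $(\lambda_k, \lambda_{k+1}) \in \{(1,0),(0,-1),(1,-1)\}$ and by whether the swap happens inside $\mu$, inside $\xi$, or across both.

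\textbf{Combinatorial matching.} The core verification is then that $\epsilon = c^{\s_k \lambda}_{\mu', \xi'} - c^\lambda_{\mu, \xi}$. Since only positions $k$ and $k+1$ of $(\mu,\xi)$ move, only pairs $(a,b)$ with $\{a,b\} \cap \{k,k+1\} \neq \emptyset$ can contribute a change, and these are finite in number and can be enumerated directly against the quantum weight $q_k^{b(\langle h_k, \mu\rangle - a)}$ produced by the Leibniz expansion (the sign $-$ in $-q_1$ entering from the divided-power normalization $[\,\cdot\,]_i!$ when $\langle h_k, \mu \rangle = 2$, i.e.\ the $(1,-1)$ pattern inside a single factor).

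\textbf{Main obstacle.} The delicate point is the boundary transition at $k = n$, where $f_n$ acts differently for $A^{(2)}_{2n-1}, A^{(2)}_{2n}, D^{(2)}_{n+1}$ and the extra factor $[2]_n$ appearing for $D^{(2)}_{n+1}$ (and also $B^{(1)}_n$, but handled by \cite{CP96}) must be absorbed correctly; the hypothesis $i + j \le n - \vartheta$ keeps $\lambda$ supported in the "long-root block" so that these short-root actions only affect $(\mu_n, \xi_n)$ configurations whose inversion count can still be tracked by \eqref{eq: coeff c}. A final well-definedness check — that the recursion is path-independent, i.e.\ gives the same $C^\lambda_{\mu,\xi}$ regardless of the reduced expression of $w \in \mathsf{W}_0$ with $w \cdot \va_{i+j} = \lambda$ — follows from the fact that the weights of $V_0(\va_{i+j})$ in $\mathsf{W}_0 \cdot \va_{i+j}$ have multiplicity one, so $C^\lambda_{\mu,\xi}$ is intrinsically defined.
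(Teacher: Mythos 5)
There is a genuine gap, and it sits exactly where the content of the proposition lives: your base case is false. For $\lambda=\va_{i+j}=\epsilon_1+\cdots+\epsilon_{i+j}$, the constraints \eqref{eq: multi 0} and \eqref{eq: mu_k xi_k=0} only force $(\mu_a,\xi_a)\in\{(1,0),(0,1)\}$ for $a\le i+j$ and $(\mu_a,\xi_a)=(0,0)$ otherwise, so there are $\binom{i+j}{i}$ decompositions $\lambda=\mu+\xi$ with $\mu\in\mathsf{W}_0\cdot\va_i$ and $\xi\in\mathsf{W}_0\cdot\va_j$, not one. (Already for $i=j=1$ the highest weight vector of $V_0(\va_2)$ inside $V_0(\va_1)\otimes V_0(\va_1)$ is $u_{\epsilon_1}\otimes u_{\epsilon_2}-q_1\,u_{\epsilon_2}\otimes u_{\epsilon_1}$, with two terms.) Determining these $\binom{i+j}{i}$ coefficients $C^{\va_{i+j}}_{\mu,\xi}$ relative to one another is precisely the nontrivial part of the statement at the dominant weight; the single term $u_{\va_i}\otimes u_{\epsilon_{i+1}+\cdots+\epsilon_{i+j}}$ is not a highest weight vector of the submodule, so your descent starts from a vector that is not in the image of \eqref{map: injection g_0} and will propagate wrong coefficients (for instance it never produces the ``out of order'' decompositions such as $(\mu,\xi)=(\epsilon_3,\epsilon_1)$ of $\lambda=\epsilon_1+\epsilon_3$).

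The missing ingredient is a relation among the different decompositions of one fixed $\lambda$, which the paper extracts from the highest weight condition $e_k v_{\va_{i+j}}=0$ for $1\le k<i+j$: collecting the coefficient of each target vector gives $C^{\lambda}_{\mu,\xi}\,q_1^{-1}+C^{\lambda}_{\s_k\mu,\s_k\xi}=0$, hence $c^{\lambda}_{\mu,\xi}=c^{\lambda}_{\s_k\mu,\s_k\xi}+1$, whenever $(\mu_k,\mu_{k+1})=(0,1)$ and $(\xi_k,\xi_{k+1})=(1,0)$; this recursion reaches every decomposition of $\va_{i+j}$ from the canonical one and is where the factor $-q_1$ actually arises. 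Your inductive step (descent along extremal strings via $f_k^{(\langle h_k,\lambda\rangle)}$ and the coproduct) is essentially the paper's first reduction, except that the factor you call $(-q_1)^{\epsilon}$ is always $1$ there: by \eqref{eq: mu_k xi_k=0} the Leibniz expansion contributes exactly one term per decomposition and the $K_k$-power cancels the divided-power coefficient, which is why both $C$ and $c$ are invariant along the $\mathsf{W}_0$-orbit and the whole problem reduces to the dominant weight. In short, the two halves of your argument carry the wrong weights: the orbit-descent is the easy half, and the dominant-weight computation you dismissed as trivial is the proof.
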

\begin{proof}
First, we check that
$c_{\s_k\mu,\s_k\xi}^{\s_k\lambda}=c_{\mu,\xi}^{\lambda}$ whenever
$\langle h_k,\lambda \rangle \neq 0$. To do this, it suffices to
consider $(a,b)=(k,k+1)$. Then one can easily check that
\begin{align*}& \# \{ (a,b) \ | \ a<b, \
(\mu_a,\xi_a)=(0,1), \ \mu_b\ne 0 \} \\ & \qquad\qquad + \# \{ (a,b)
| \
a<b, \ (\mu_a,\xi_a)=(-1,0), \xi_b\ne 0 \} \\
&= \# \{ (a,b) \ | \ a<b, \ ((\s_k\mu)_a,(\s_k\xi)_a)=(0,1), \
(\s_k\mu)_b\ne 0 \} \\ & \qquad\qquad + \# \{ (a,b) | \ a<b, \
((\s_k\mu)_a,(\s_k\mu)_a)=(-1,0), (\s_k\xi)_b\ne 0 \}.
\end{align*}
Thus we can assume that $\lambda=\va_{i+j}$.  If $k  \ge i+j$, then
$e_k v_\lambda= 0$, trivially.  When $1 \le k < i+j$,
\begin{align*}
0=e_k v_\lambda & = \sum_{\substack{ (\mu_k,\mu_{k+1})=(0,1) \\
(\xi_k,\xi_{k+1})=(1,0)}} C_{\mu,\xi}^{\lambda}q_1^{-1} v_{\s_k\mu}
\otimes v_{\xi} + \sum_{\substack{ (\mu_k,\mu_{k+1})=(1,0) \\
(\xi_k,\xi_{k+1})=(0,1)}} C_{\mu,\xi}^{\lambda} v_{\mu} \otimes
v_{\s_k\xi}.
\end{align*}
Thus, for $(\mu_k,\mu_{k+1})=(0,1) $ and $(\xi_k,\xi_{k+1})=(1,0)$,
we have
$$ c_{\mu,\xi}^{\lambda} = c_{\s_k\mu,\s_k\xi}^{\lambda} +1 $$
which implies our assertion.
\end{proof}

Now we shall
determine $x,y \in \ko^\times$ such that there exists an injective $U_q'(\g)$-module homomorphism
\begin{align} \label{eq:coeff x,y}
 V(\va_{i+j}) \rightarrowtail V(\va_{i})_x \otimes V(\va_{j})_y.
 \end{align}
 The strategy in this subsection can be explained as follows:
As a $U_q(\g_0)$-module, we have an injection
$$V_0(\va_{i+j}) \rightarrowtail V(\va_{i})_{x} \otimes V(\va_{j})_{y}.$$
By using characterization of $V(\va_{i+j})$ given in \cite[$\S$ 1.3]{AK}, it suffices to determine $x$ and $y$ satisfying
$$ C^{\s_0 \lambda}_{\s_0 \mu,\s_0 \xi} = f(x)g(y) C^{\lambda}_{\mu,\xi}$$
where
\begin{itemize}
\item $\lambda$, $\mu$ and $\xi$ are extremal weights and $\langle h_0,\lambda \rangle \ne 0$,
\item $f(x)$ and $g(y)$ arise from the action of $e_0$ or $f_0$ on $V(\va_i)_x$ or $V(\va_j)_y$.
\end{itemize}
\medskip

Recall the notion $\mathcal{S}_k \cdot u_{\mu}=u_{\s_k\mu}$ for an extremal weight $\mu$ in \eqref{eq: extremal}.

\begin{proposition} Let $\g=A^{(2)}_{2n-1}$ $(n \ge 3)$. Then the $x$, $y$ in $\eqref{eq:coeff x,y}$ are given as follows:
$$x = (-q)^{j} \quad \text{ and } \quad y = (-q)^{-i}.$$
\end{proposition}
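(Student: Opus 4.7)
The plan is to extend the $U_q(\g_0)$-injection $\Phi_{i,j}$ of \eqref{map: injection g_0} to a $U_q'(\g)$-homomorphism by choosing the spectral parameters $x, y$ so as to make $e_0$ act compatibly. Since $\Phi_{i,j}$ already intertwines all $e_k, f_k$ for $k \in I_0$, and since $V(\va_{i+j})$ is generated over $U_q'(\g)$ by $\mathsf{v}_{\va_{i+j}}$ (Definition \ref{def: fundamental module}(4)), it is enough to verify that the candidate map commutes with $e_0$ on a single well-chosen extremal weight vector.

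For $\g = A^{(2)}_{2n-1}$ the affine simple root satisfies $\alpha_0 \equiv -\theta = -(\epsilon_1+\epsilon_2) \pmod{\Z\delta}$, so the classical $e_0$-action on $V(\va_1)$ sends $v_1 \mapsto v_{\bar{2}}$, $v_2 \mapsto v_{\bar{1}}$, and zero otherwise. I would pick the extremal weight
\[
\lambda = \epsilon_1 + \epsilon_3 + \epsilon_4 + \cdots + \epsilon_{i+j+1} \in \mathsf{W}_0 \cdot \va_{i+j},
\]
which exists whenever $i+j+1 \le n$; this satisfies $\langle h_0,\lambda\rangle = -1$, so by the extremal relation \eqref{eq: extremal} we have $u_{\s_0\lambda} = e_0 \, u_\lambda$ with $\s_0 \lambda \equiv -\epsilon_2 + \epsilon_3 + \cdots + \epsilon_{i+j+1}$. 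The remaining boundary case $i+j = n$ would be treated by a separate extremal weight with $|\langle h_0,\lambda\rangle|=2$, where the product $xy$ is pinned down and combined with the formula for $i+j < n$ via the iterated embedding $V(\va_{i+j}) \hookrightarrow V(\va_1)^{\otimes (i+j)}$.

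Next, I would expand $v_\lambda$ and $v_{\s_0\lambda}$ as in Proposition \ref{prop: c} and compute $e_0 v_\lambda$ using $\Delta(e_0) = e_0 \otimes K_0^{-1} + 1 \otimes e_0$ together with the spectral shifts $e_0(u_\mu)_x = x(e_0 u_\mu)_x$ and $e_0(u_\xi)_y = y(e_0 u_\xi)_y$. Because $\lambda_2 = 0$ forces $\mu_2 = \xi_2 = 0$ via \eqref{eq: mu_k xi_k=0}, only pairs with $\mu_1 = 1$ contribute via the first summand of the coproduct (producing an $x$-factor together with a $K_0^{-1}$-eigenvalue determined by the weight of $u_\xi$), and only pairs with $\xi_1 = 1$ contribute via the second summand (producing a $y$-factor). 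Matching the coefficients of two distinct basis tensors $u_{\mu'} \otimes u_{\xi'}$ on both sides of $e_0 v_\lambda = v_{\s_0\lambda}$ yields two equations of the form $x \cdot (-q)^{A} = (-q)^{A'}$ and $y \cdot (-q)^{B} = (-q)^{B'}$, where $A, A', B, B'$ are the explicit statistics $c^\bullet_{\cdot,\cdot}$ given by \eqref{eq: coeff c} for the corresponding pairs.

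The main obstacle is the combinatorial bookkeeping of the statistic $c^\bullet_{\mu,\xi}$ under the passage $\lambda \leadsto \s_0\lambda$: one must verify that the change of a single $(\mu,\xi)$-entry from position $1$ to position $2$ (with a sign flip) contributes exactly $j$ to the exponent on the $x$-side and $-i$ on the $y$-side. Concretely, for the $x$-equation the surviving factor of $(-q)$ arises from the $j$ indices $b > 2$ with $\xi_b \neq 0$ created when we reflect $(1,0)$ into $(0,-1)$ at positions $(1,2)$, and symmetrically for the $y$-equation one gets a $(-q)^{-i}$ contribution from the $i$ indices $b$ with $\mu_b \neq 0$. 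Once this count is done carefully the claimed values $x = (-q)^{j}$ and $y = (-q)^{-i}$ drop out, and the embedding $V(\va_{i+j}) \hookrightarrow V(\va_i)_{(-q)^j} \otimes V(\va_j)_{(-q)^{-i}}$ is thereby a $U_q'(\g)$-homomorphism.
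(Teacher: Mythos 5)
Your proposal follows essentially the same route as the paper's proof: both extend the classical embedding $\Phi_{i,j}$ by imposing the extremal-vector relation at the affine node, let $x$ and $y$ enter as spectral factors through the coproduct action of $e_0$, and pin down the exponents by comparing with the statistic of Proposition \ref{prop: c}, which gives $c^{\s_0\lambda}_{\s_0\mu,\s_0\xi}-c^{\lambda}_{\mu,\xi}=-(\delta(\xi_1=1)+\delta(\xi_2=1))\,i+(\delta(\mu_1=1)+\delta(\mu_2=1))\,j$. The one substantive difference is your insistence on a test weight with $\langle h_0,\lambda\rangle=-1$, which excludes the admissible case $i+j=n$, and your fallback there is too weak as stated --- knowing only the product $xy$ cannot separate $x$ from $y$, and the values for smaller $i+j$ do not transfer without first establishing the iterated embedding into $V(\va_1)^{\otimes(i+j)}$ --- whereas the paper works uniformly with all $\lambda$ satisfying $\lambda_1,\lambda_2\ge 0$, so that pairs with $(\mu_1,\mu_2)=(1,1)$, $(\xi_1,\xi_2)=(1,1)$ and mixed type supply the relations $x^2=(-q)^{2j}$, $y^2=(-q)^{-2i}$ and $xy=(-q)^{j-i}$ simultaneously and the boundary case causes no extra trouble.
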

\begin{proof}
The Dynkin diagram of $A^{(2)}_{2n-1}$ is given as follows:
$$
\xymatrix@R=3ex{ & *{\circ}<3pt> \ar@{-}[d]_<{\qquad \qquad -\epsilon_1-\epsilon_2  }^<{0}\\
*{\circ}<3pt> \ar@{-}[r]_<{1 \ }^{\epsilon_1-\epsilon_2 \qquad \ }
&*{\circ}<3pt> \ar@{-}[r]_<{2}^{\epsilon_2-\epsilon_3 \qquad \ }  &
{} \ar@{.}[r] & *{\circ}<3pt> \ar@{-}[r]_>{\,\,\,\ n-1}^{ \qquad \
\epsilon_{n-1}-\epsilon_{n} }
&*{\circ}<3pt>\ar@{<=}[r]_>{\,\,\,\,n}^{ \qquad \ \  2\epsilon_n }
&*{\circ }<3pt> }.$$
It suffices to consider $\lambda \in \mathsf{W}_0 \cdot \va_{i+j}$ such that
 $\lambda_1,\lambda_2 \ge 0$. Thus it is enough to consider when $\mu_1,\mu_2,\xi_1,\xi_2 \ge 0$. Then we have
$$ \mathcal{S}_0 \cdot v_\lambda=v_{\s_0 \lambda} = \sum C^{\lambda}_{\mu,\xi} \
x^{\delta(\mu_1=1)+\delta(\mu_2=1)}y^{\delta(\xi_1=1)+\delta(\xi_2=1)}
\ v_{\s_0 \mu} \otimes v_{\s_0 \xi}.$$
Here
$\s_0(\varepsilon_1,\varepsilon_2,\varepsilon_3,\ldots,\varepsilon_n)=
\s_0(-\varepsilon_2,-\varepsilon_1,\varepsilon_3,\ldots,\varepsilon_n)$ ($\varepsilon_i \in \{ -1,0,1\}$).
Thus
$$ C^{\s_0\lambda}_{\s_0\mu,\s_0\xi} = x^{\delta(\mu_1=1)+\delta(\mu_2=1)}y^{\delta(\xi_1=1)+\delta(\xi_2=1)} C^{\lambda}_{\mu,\xi}.$$
On the other hand, by \eqref{eq: coeff c},
\begin{align*}
c^{\s_0\lambda}_{\s_0\mu,\s_0\xi}-c^{\lambda}_{\mu,\xi}   &=
+\delta(\mu_1=1) \times \# \{ b>1 \ | \
\xi_b \ne 0 \} + \delta(\mu_2=1) \times  \# \{ b>2 \ | \ \xi_b \ne 0 \} \\
 & \quad - \delta(\xi_1=1) \times  \# \{ b>1 \ | \ \mu_b \ne 0 \}  - \delta(\xi_2=1)\times \# \{ b>2 \ | \ \mu_b \ne 0
 \} \\
 &= \delta(\mu_1=1) \times j + \delta(\mu_2=1) \times
 (j-\delta(\xi_2=1)) \\ & \quad - \delta(\xi_1=1) \times i -\delta(\xi_2=1) \times
 (i-\delta(\mu_2=1)).
\end{align*}
By \eqref{eq: mu_k xi_k=0}, $\mu_i\xi_i=0$ ($i=1,2$) and hence we can conclude that
$$ c^{\s_0\lambda}_{\s_0\mu,\s_0\xi} -c^{\lambda}_{\mu,\xi} =
-(\delta(\xi_1=1)+\delta(\xi_2=1))\times
i+(\delta(\mu_1=1)+\delta(\mu_2=1))\times j.$$ Thus we have
$$x = (-q)^{j} \quad \text{ and } \quad y = (-q)^{-i}.$$
\end{proof}

\begin{proposition} Let $\g=A^{(2)}_{2n}$ $(n \ge 2)$. Then the $x$, $y$ in $\eqref{eq:coeff x,y}$ are given as follows:
$$x = (-q)^{j} \quad \text{ and } \quad y = (-q)^{-i}.$$
\end{proposition}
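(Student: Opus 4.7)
The plan is to adapt the strategy of the preceding $A^{(2)}_{2n-1}$ proposition. The essential difference is the shape of the affine Dynkin diagram: for $A^{(2)}_{2n}$ the affine node $0$ is attached only to node $1$, with $\alpha_0$ classically proportional to $\epsilon_1$, so the reflection $\s_0$ acts on $\wl^0_\cl$ by flipping only the sign of $\epsilon_1$ and leaving all the other coordinates fixed (in contrast to $A^{(2)}_{2n-1}$, where both $\epsilon_1$ and $\epsilon_2$ are flipped). This simpler action of $\s_0$ is what ultimately reduces the computation to a single-coordinate bookkeeping.

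By the characterization of $V(\va_{i+j})$ recalled in Definition \ref{def: fundamental module}, it suffices to determine $x,y$ such that the $U_q(\g_0)$-injection $\Phi_{i,j}$ of \eqref{map: injection g_0} intertwines the $\mathcal{S}_0$-action: namely, $\mathcal{S}_0 \cdot \Phi_{i,j}(u_\lambda) = \Phi_{i,j}(u_{\s_0 \lambda})$ in $V(\va_i)_x \tens V(\va_j)_y$ for every extremal $\lambda$ with $\langle h_0,\lambda\rangle \ne 0$. We may take $\lambda$ with $\lambda_1=1$, in which case \eqref{eq: multi 0} forces $\mu_1, \xi_1 \in \{0,1\}$ and \eqref{eq: mu_k xi_k=0} gives $\mu_1 \xi_1 = 0$ for every term.

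Using \eqref{eq: coeff c} and Proposition \ref{prop: c}, the contributions to $c^\lambda_{\mu,\xi}$ from pairs $(a,b)$ with $a \ge 2$ are untouched by $\s_0$, so only the $a=1$ terms matter. A short case check over $(\mu_1,\xi_1) \in \{(0,0),(1,0),(0,1)\}$, combined with $\#\{b>1 : \mu_b \ne 0\} = i - \delta(\mu_1=1)$ and $\#\{b>1 : \xi_b \ne 0\} = j - \delta(\xi_1=1)$, should yield
\begin{align*}
c^{\s_0 \lambda}_{\s_0 \mu, \s_0 \xi} - c^{\lambda}_{\mu,\xi} = j\,\delta(\mu_1=1) - i\,\delta(\xi_1=1),
\end{align*}
so that the coefficient ratio equals $(-q)^{j\,\delta(\mu_1=1) - i\,\delta(\xi_1=1)}$.

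On the tensor side, expand $\mathcal{S}_0 \cdot (u_\mu \tens u_\xi)$ in $V(\va_i)_x \tens V(\va_j)_y$ using the coproduct $\Delta(e_0) = e_0 \tens K_0^{-1} + 1 \tens e_0$ (together with its divided-power version) and the explicit $e_0,f_0$-formulas of Section \ref{Append: vector spin reps}, tracking the factors contributed by $e_0(u_z) = z(e_0 u)_z$ on each slot. Matching the resulting spectral factor against the coefficient-side ratio forces exactly $x = (-q)^j$ and $y = (-q)^{-i}$, as asserted. The main obstacle is this final bookkeeping step: since $\langle h_0, \epsilon_1 \rangle = -2$ for $A^{(2)}_{2n}$, the operator $\mathcal{S}_0$ on a slot with $\mu_1 = 1$ is the divided power $e_0^{(2)}$, passing through the intermediate weight-zero vector $v_\emptyset$—present in $V(\va_1)$ for $A^{(2)}_{2n}$ but absent in $A^{(2)}_{2n-1}$—with the accompanying $[2]_0$-factors coming from $e_0 v_\emptyset = [2]_0 v_{\overline{1}}$. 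One must verify that these divided-power and normalization contributions, together with the $K_0^{-1}$-factors appearing in the coproduct, combine to produce exactly the spectral exponents predicted by the coefficient-side computation.
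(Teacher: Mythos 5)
Your strategy coincides with the paper's: restrict to an extremal weight $\lambda$ with $\lambda_1=1$, compute how $c^{\lambda}_{\mu,\xi}$ changes under $\s_0$, and match the result against the spectral factors produced by $\mathcal{S}_0$ acting on $V(\va_i)_x\otimes V(\va_j)_y$. Your coefficient-side computation is exactly the paper's: using $\mu_1\xi_1=0$ and the counts $\#\{b>1:\mu_b\ne 0\}$, $\#\{b>1:\xi_b\ne 0\}$, one gets $c^{\s_0\lambda}_{\s_0\mu,\s_0\xi}-c^{\lambda}_{\mu,\xi}=j\,\delta(\mu_1=1)-i\,\delta(\xi_1=1)$, hence a coefficient ratio $(-q)^{j\delta(\mu_1=1)-i\delta(\xi_1=1)}$ since $q_1=q$ here. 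Up to this point the proposal is correct and follows the paper.

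The problem is the step you flag and then defer. You write that the tensor-side bookkeeping ``forces exactly $x=(-q)^j$ and $y=(-q)^{-i}$'' and then immediately concede that ``one must verify'' the divided-power and normalization contributions; but that verification is precisely where the content of the proposition lies, and it is not a formality. Since $\langle h_0,\epsilon_1\rangle=-2$ for $A^{(2)}_{2n}$, the operator $\mathcal{S}_0$ on a summand with $\mu_1=1$ is $e_0^{(2)}$ with both copies of $e_0$ landing on the same tensor slot (the cross terms in $\Delta(e_0)^2$ vanish because $e_0$ annihilates the slot whose first coordinate is $0$), so the naive count yields $x^{2\delta(\mu_1=1)}y^{2\delta(\xi_1=1)}$ --- which is exactly the exponent used in the parallel computation for $D^{(2)}_{n+1}$ in Proposition \ref{prop: mor D 1}, where the conclusion is correspondingly an equation for $x^2$ rather than for $x$. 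The paper's $A^{(2)}_{2n}$ proof instead records the exponent as $\delta(\mu_1=1)$ and reads off $x=(-q)^j$ directly. Reconciling these two bookkeepings (the passage through the intermediate weight-zero vector, the $[2]_0$ in $e_0 v_\emptyset=[2]_0 v_{\overline 1}$, the normalization of $e_0^{(2)}$, and the $K_0^{-1}$ factors) is the entire difficulty of the $A^{(2)}_{2n}$ case relative to $A^{(2)}_{2n-1}$, and your proposal does not carry it out. As written, the argument would equally well ``conclude'' $x^2=(-q)^j$ if the exponent came out as $2\delta(\mu_1=1)$, so the proof is incomplete at its decisive step; you should perform the computation explicitly (the case $i=j=1$, where $\Phi(u_{\va_2})=v_1\otimes v_2-qv_2\otimes v_1$ and the answer can be cross-checked against $d_{1,1}(z)=(z-q^2)(z-p^*)$, is the place to start).
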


\begin{proof}
The Dynkin diagram of $A^{(2)}_{2n}$ is given as follows:
$$
\xymatrix@R=3ex{ *{\circ}<3pt> \ar@{<=}[r]_<{0}^{ \ -\epsilon_1
\qquad \ \  } &*{\circ}<3pt> \ar@{-}[r]_<{1}^{\epsilon_1-\epsilon_2
\qquad \ }   & {} \ar@{.}[r] & *{\circ}<3pt> \ar@{-}[r]_>{\,\,\,\
n-1}^{ \qquad \ \epsilon_{n-1}-\epsilon_{n} }
&*{\circ}<3pt>\ar@{<=}[r]_>{\,\,\,\,n}^{ \qquad \  2\epsilon_{n} }
&*{\circ}<3pt> }
$$
It suffices to consider $\lambda \in \mathsf{W}_0 \cdot \va_{i+j}$ such that
$\langle h_0, \lambda \rangle <0$ and hence $\lambda_1=1$. Then we have
$$ \mathcal{S}_0 \cdot v_\lambda=v_{\s_0 \lambda} = e_0^{(2)}v_{\lambda} = C^{\lambda}_{\mu,\xi}
x^{\delta(\mu_1=1)}y^{\delta(\xi_1=1)} \ v_{\s_0 \mu} \otimes v_{\s_0\xi}.$$
Here $\s_0(\varepsilon_1,\varepsilon_2,\ldots,\varepsilon_n)=
\s_0(-\varepsilon_1,\varepsilon_2,\ldots,\varepsilon_n)$. Thus
$$ C^{\s_0\lambda}_{\s_0\mu,\s_0\xi} = x^{\delta(\mu_1=1)}y^{\delta(\xi_1=1)} C^{\lambda}_{\mu,\xi}.$$
On the other hand, by \eqref{eq: coeff c},
$$ C^{\s_0\lambda}_{\s_0\mu,\s_0\xi}  = (-q)^{\delta(\mu_1=1) \times \# \{ b>1 \mid \xi_b \ne 0  \}
} (-q)^{-\delta(\xi_1=1) \times \# \{ b>1 \mid \mu_b \ne 0  \} }
C^{\lambda}_{\mu,\xi}.$$ Thus we can conclude that
$$x = (-q)^{j} \quad \text{ and } \quad y = (-q)^{-i}.$$
\end{proof}

\begin{proposition} \label{prop: mor D 1}
Let $\g=D^{(2)}_{n+1}$. Then the $x$, $y$ in $\eqref{eq:coeff x,y}$ are given as follows:
$$x = \mqs^{j/2} \quad \text{ and } \quad y = \mqs^{-i/2}.$$
\end{proposition}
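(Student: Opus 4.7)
The plan is to follow the template of Proposition~3.3, exploiting that the $D^{(2)}_{n+1}$ affine node attaches to the $B_n$ Dynkin diagram in the same combinatorial manner as the $A^{(2)}_{2n}$ node attaches to $C_n$. From the action of $e_0$ on the vector representation in Section~2, we have $\alpha_0 = -\epsilon_1$, so the reflection $\s_0$ flips the sign of the $\epsilon_1$-component of a weight. Accordingly, it suffices to consider $\lambda \in \mathsf{W}_0 \cdot \va_{i+j}$ with $\lambda_1 = 1$ (equivalently $\langle h_0,\lambda\rangle = -2$), and to enforce the identity $\mathcal{S}_0 \cdot v_\lambda = e_0^{(2)}v_\lambda = v_{\s_0\lambda}$ inside $V(\va_i)_x \otimes V(\va_j)_y$.

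First, I would expand $v_\lambda = \sum C^\lambda_{\mu,\xi}\, u_\mu \otimes u_\xi$ and apply the coproduct
$$\Delta(e_0^{(2)}) = e_0^{(2)}\otimes 1 + q_0^{-1} e_0 K_0^{-1} \otimes e_0 + K_0^{-2} \otimes e_0^{(2)}.$$
The constraint $\mu_k \xi_k = 0$ from \eqref{eq: mu_k xi_k=0} restricts the surviving summands to $(\mu_1,\xi_1)\in\{(1,0),(0,1)\}$, and on each such summand the middle term of $\Delta(e_0^{(2)})$ vanishes because one of $e_0 u_\mu$, $e_0 u_\xi$ is zero. The twist convention $e_0(u_z) = z(e_0 u)_z$ then produces a power of $x$ when the $e_0^{(2)}$-action lands on the first tensor factor, and a power of $y$ when it lands on the second.

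Next, I would compare the resulting expression with $v_{\s_0\lambda} = \sum C^{\s_0\lambda}_{\s_0\mu,\s_0\xi}\, u_{\s_0\mu}\otimes u_{\s_0\xi}$. This yields, for each $(\mu,\xi)$, a relation $C^{\s_0\lambda}_{\s_0\mu,\s_0\xi} = x^{?} y^{?}\, C^\lambda_{\mu,\xi}$. Invoking Proposition~3.2 to write $C^\bullet_{\bullet,\bullet} = (-q_1)^{c^\bullet_{\bullet,\bullet}}$ and running the bookkeeping of the $A^{(2)}_{2n}$ proof — namely $c^{\s_0\lambda}_{\s_0\mu,\s_0\xi} - c^\lambda_{\mu,\xi} = j\,\delta(\mu_1=1) - i\,\delta(\xi_1=1)$, derived from $\mu_k\xi_k=0$ together with the fact that $\mu$ (resp.\ $\xi$) has $i$ (resp.\ $j$) nonzero entries — determines $x$ and $y$.

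The step I expect to require the most care, and which is the genuinely $D^{(2)}_{n+1}$-specific content, is tracking the quantum parameters $q_0, q_1$ under the $D^{(2)}_{n+1}$ normalization of the invariant form. Because the finite part of $D^{(2)}_{n+1}$ is $B_n$ rather than $C_n$, the squared lengths of the simple roots differ from those arising in the $A^{(2)}_{2n}$ calculation; combined with the $z^2$-factor picked up from applying $e_0$ twice on the twisted module, this converts the analogous $A^{(2)}_{2n}$ outcome ``$(-q)^j$'' into $\mqs^{j/2}$, producing the claimed values $x = \mqs^{j/2}$ and $y = \mqs^{-i/2}$.
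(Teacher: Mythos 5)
Your proposal follows essentially the same route as the paper's proof: reduce to $\lambda\in\mathsf{W}_0\cdot\va_{i+j}$ with $\lambda_1=1$ (so $\mathcal{S}_0=e_0^{(2)}$), observe that only the terms where $e_0^{(2)}$ lands entirely on one tensor factor survive, and compare the resulting factor $x^{2\delta(\mu_1=1)}y^{2\delta(\xi_1=1)}$ with the ratio $(-q_1)^{j\,\delta(\mu_1=1)-i\,\delta(\xi_1=1)}$ coming from Proposition \ref{prop: c}, using $q_1=q^2$ to get $x^2=\mqs^{j}$ and $y^2=\mqs^{-i}$. The only nit is that your displayed formula for $\Delta(e_0^{(2)})$ uses a coproduct convention different from the paper's $\Delta(e_i)=e_i\otimes K_i^{-1}+1\otimes e_i$; this is immaterial since the cross term vanishes either way.
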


\begin{proof}
The Dynkin diagram of $D^{(2)}_{n+1}$ is given as follows:
$$
\xymatrix@R=3ex{ *{\circ}<3pt> \ar@{<=}[r]_<{0}^{  -\epsilon_1
\qquad \ }  &*{\circ}<3pt> \ar@{-}[r]_<{1}^{\epsilon_1-\epsilon_2
\qquad \ }    & {} \ar@{.}[r] & *{\circ}<3pt> \ar@{-}[r]_>{\,\,\,\
n-1}^{ \qquad \ \epsilon_{n-1}-\epsilon_{n} }
&*{\circ}<3pt>\ar@{=>}[r]_>{\,\,\,\,n}^{ \qquad \  \epsilon_{n} }
&*{\circ}<3pt> }
$$
It suffices to consider $\lambda \in \mathsf{W}_0 \cdot \va_{i+j}$ such that
$\langle h_0, \lambda \rangle <0$. Thus we assume that $\lambda_1=1$. Note that $q_1=q^2$. Then
we have
\begin{align*}
\mathcal{S}_0 \cdot v_\lambda=v_{\s_0 \lambda}  = e_0^{(2)} v_\lambda = \sum C^{\lambda}_{\mu,\xi}
x^{2\delta(\mu_1=1)}y^{2\delta(\xi_1=1)} \ v_{\s_0 \mu} \otimes
v_{\s_0\xi}.
\end{align*}
Here $\s_0(\varepsilon_1,\varepsilon_2,\ldots,\varepsilon_n)=
\s_0(-\varepsilon_1,\varepsilon_2,\ldots,\varepsilon_n)$. Thus
$$ C^{\s_0\lambda}_{\s_0\mu,\s_0\xi} = x^{2\delta(\mu_1=1)}y^{2\delta(\xi_1=1)} C^{\lambda}_{\mu,\xi}.$$
On the other hand, by \eqref{eq: coeff c},
$$ C^{\s_0\lambda}_{\s_0\mu,\s_0\xi}  = \mqs^{\delta(\mu_1=1) \times \# \{ b>1 \mid \xi_b \ne 0  \}
} \mqs^{-\delta(\xi_1=1) \times \# \{ b>1 \mid \mu_b \ne 0  \} }
C^{\lambda}_{\mu,\xi}.$$ Thus we can conclude that
$$x^2 = \mqs^{j} \quad \text{ and } \quad y^2 = \mqs^{-i},$$
which yield our assertion.
\end{proof}

\begin{theorem} \label{thm: pij iota ij 1}
For $i+j=k \le n-\vartheta$, there exists a surjective $U_q'(\g)$-module homomorphism
\begin{align} \label{eq: p ij}
  p_{i,j} \colon V(\varpi_i)_{(-q^t)^{-j/t}} \otimes V(\varpi_j)_{(-q^t)^{i/t}}
  \twoheadrightarrow V(\varpi_{k}).
  \end{align}
By taking dual, there exists an injective $U_q'(\g)$-module homomorphism
\begin{align}\label{eq: iota ij}
  \iota_{i,j} \colon V(\varpi_k)  \rightarrowtail V(\varpi_i)_{(-q^t)^{j/t}}
  \otimes V(\varpi_j)_{(-q^t)^{-i/t}}.
  \end{align}
\end{theorem}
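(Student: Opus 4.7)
The plan is to assemble this theorem from the three preceding propositions together with the $B^{(1)}_n$ case supplied by \cite{CP96}, by invoking the duality formalism of Section~1. For each of the remaining affine types, the corresponding proposition has already constructed (case by case) a nonzero $U_q'(\g)$-module homomorphism
\[
V(\varpi_{i+j}) \;\longrightarrow\; V(\varpi_i)_{(-q^t)^{j/t}} \otimes V(\varpi_j)_{(-q^t)^{-i/t}}
\]
by verifying that the $U_q(\g_0)$-embedding $\Phi_{i,j}$ of \eqref{map: injection g_0} intertwines the $\mathsf{s}_0$-action precisely at these spectral parameters. Since $V(\varpi_k)$ is a simple $U_q'(\g)$-module generated by its dominant extremal vector, any such nonzero map is automatically injective, and this is exactly $\iota_{i,j}$.

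To produce the surjection $p_{i,j}$ of \eqref{eq: p ij}, I would take the $U_q'(\g)$-module left dual of the injection obtained from the propositions with the roles of $i$ and $j$ interchanged. Using $(A \otimes B)^{*} \simeq B^{*} \otimes A^{*}$ together with the dual formulas of \eqref{eq: dual}, namely $V(\varpi_m)^{*} \simeq V(\varpi_{m^{*}})_{(p^{*})^{-1}}$ (which for a spectral shift extends to $V(\varpi_m)_a^{*} \simeq V(\varpi_{m^{*}})_{(a p^{*})^{-1}}$), and the fact that $m^{*} = m$ for the finite Cartan types $C_n$ and $B_n$ in question (since $w_0 = -\mathrm{id}$), the dual of $\iota_{j,i}$ becomes a surjection of the form $V(\varpi_i)_{\alpha} \otimes V(\varpi_j)_{\beta} \twoheadrightarrow V(\varpi_k)_{(p^*)^{-1}}$ for suitable $\alpha,\beta$. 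A uniform rescaling of the overall spectral parameter by $p^{*}$ then converts this into the asserted map $p_{i,j}$. The final sentence of the theorem is immediate from reading the same chain of identifications backwards.

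The main obstacle is the delicate spectral-parameter bookkeeping: one must track carefully how $(a p^{*})^{-1}$ accumulates on each tensor factor under the dualization of $\iota_{j,i}$, and verify that the uniform rescaling by $p^{*}$ clears from both factors so as to produce exactly the shifts $(-q^t)^{-j/t}$ and $(-q^t)^{i/t}$ stated in \eqref{eq: p ij}. One also has to confirm that the involution $m \mapsto m^{*}$ on $I_0$ is trivial for each of the three affine types treated here, which follows from $w_0 = -\mathrm{id}$ on the classical weight lattice of $C_n$ and $B_n$. Once this bookkeeping is in hand, the theorem is a formal consequence of the three propositions together with the rigidity of $\mathscr{C}_{\g}$.
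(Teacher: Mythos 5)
Your proposal matches the paper's proof, which simply reads the injection $\iota_{i,j}$ off from the three preceding propositions and obtains the surjection $p_{i,j}$ by dualization (the paper records this in one line). One minor caveat: with the paper's conventions the spectral parameter is preserved rather than inverted under dualization, i.e. $\big(V(\varpi_m)_a\big)^{*}\simeq V(\varpi_{m^{*}})_{a(p^{*})^{-1}}$, but this only interchanges the roles of $i$ and $j$ in your bookkeeping and does not affect the conclusion.
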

\begin{proof}The proof immediately comes from the previous propositions.
\end{proof}

\subsection{$i=j=n, \ k <n$ for $\g=D^{(2)}_{n+1}$.} In this subsection, we fix $\g$ as $D^{(2)}_{n+1}$.
Recall that there exists an injective $U_q(B_n)$-module homomorphism (see. \cite[Chapter 8]{HK02})
$$ V_0(\va_{i}) \rightarrowtail V_0(\va_{n}) \otimes V_0(\va_{n})$$
given by
\begin{align} \label{map: injection g_0 2}
u_\lambda \longmapsto v_\lambda= \sum_{\lambda=\mu+\xi}
C_{\mu,\xi}^{\lambda} u_\mu \otimes u_\xi
\end{align}
where $\lambda \in \mathsf{W}_0\cdot\va_{i}$ and $\mu,\xi\in \mathsf{W}_0\cdot\va_{n}$.

We sometimes write $\mu \in {\rm wt}(V_0(\va_{n})) $ as a sequence
$(\mu_1, \ldots, \mu_n) \in \{ +,- \}^{n}$ such that
$$\mu=\sum_{k=1}^{n} \dfrac{\mu_k}{2} \epsilon_k.$$

\begin{proposition} \label{prop: c spin}
Set
\begin{equation} \label{eq: coeff c 2}
\begin{aligned}
{}_1c_{\mu,\xi}^{\lambda} & = \# \{ (a,b) \ | \ a<b, \
(\mu_a,\xi_a)=(-,+), \ (\mu_b,\xi_b)=(+,-) \}, \\
{}_2c_{\mu,\xi}^{\lambda} & = \# \{ a \ | \ (\mu_a,\xi_a)=(-,+) \}, \\
\varphi(c) & = (-q)^{c} \mqs^{\frac{c(c-1)}{2}}.
\end{aligned}
\end{equation} Then $C_{\mu,\xi}^{\lambda}$ in \eqref{map: injection g_0 2} is given as follows:
$$C_{\mu,\xi}^{\lambda} =\mqs^{{}_1c_{\mu,\xi}^{\lambda}} \varphi({}_2c_{\mu,\xi}^{\lambda}).$$
\end{proposition}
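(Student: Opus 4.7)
The plan is to follow the same two-step strategy as the proof of Proposition~\ref{prop: c}: first reduce to $\lambda = \va_i$ by $\mathsf{W}_0$-invariance of the statistics ${}_1c$ and ${}_2c$, then extract recursions for $C^\lambda_{\mu,\xi}$ from the highest-weight condition $e_k v_\lambda = 0$ for every $k \in I_0$. The new feature, compared with Proposition~\ref{prop: c}, is the presence of the short simple root $\alpha_n = \epsilon_n$ (with $q_n = q$), which will be responsible for the $\varphi$ factor.

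For Step~1, I would verify that ${}_1c^{\s_k\lambda}_{\s_k\mu,\s_k\xi} = {}_1c^\lambda_{\mu,\xi}$ and ${}_2c^{\s_k\lambda}_{\s_k\mu,\s_k\xi} = {}_2c^\lambda_{\mu,\xi}$ whenever $\langle h_k, \lambda\rangle \neq 0$. For $k < n$ this is immediate, since $\s_k$ only swaps coordinates $k$ and $k{+}1$, and a direct check handles $k = n$ (where $\s_n$ flips the sign of the $n$-th coordinate, permuting the types $(+,-)\leftrightarrow(-,+)$ at position $n$). Combined with \eqref{eq: extremal} and the fact that $\mathsf{W}_0 \cdot \va_i$ is a single orbit, this reduces the verification to the single case $\lambda = \va_i$.

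At $\lambda = \va_i$ the vector $v_\lambda$ is a highest weight vector, so I would expand $0 = e_k v_\lambda$ using $\Delta(e_k) = e_k \otimes K_k^{-1} + 1 \otimes e_k$ and match the coefficient of each basis tensor $v_\nu \otimes v_\eta$. For $k < n$ the action of $e_k$ swaps $(-,+)$ to $(+,-)$ in a single tensor factor, and exactly as in the proof of Proposition~\ref{prop: c} this produces $C^\lambda_{\mu',\xi'} = \mqs \, C^\lambda_{\mu,\xi}$ whenever $(\mu',\xi')$ is obtained from $(\mu,\xi)$ by exchanging a $(-,+)$ at position $b$ with a $(+,-)$ at position $a < b$; this matches the increment of ${}_1c$ by one and accounts for the factor $\mqs^{{}_1c^\lambda_{\mu,\xi}}$. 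For $k = n$ the action of $e_n$ flips a single $m_n$ from $-$ to $+$, and the twist $K_n^{-1}$ contributes $q^{-\eta_n}$; collecting the two contributions to the coefficient of $v_\nu \otimes v_\eta$ with $\nu_n = \eta_n = +$ will yield a ratio of $-q^{-1}$ between two coefficients whose pairs differ only at position $n$. Tracking the simultaneous change of ${}_1c$ (which increases by ${}_2c - 1$) and ${}_2c$ (which decreases by $1$) then produces the telescoping identity $\varphi(c) = -q \cdot \mqs^{\,c-1}\varphi(c-1)$, whose unique solution with $\varphi(0) = 1$ is $\varphi(c) = (-q)^c \mqs^{c(c-1)/2}$.

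The main technical difficulty will be the short-root step $k = n$: unlike in Proposition~\ref{prop: c}, where every simple root contributes a uniform $\mqs$ factor, here the twist $K_n^{-1}$ in $\Delta(e_n)$ couples to the cumulative ${}_2c$-count, producing a $q$-triangular power that must cancel consistently across all pairs of positions where the sign-flip can occur. Verifying that the recurrence derived from $e_n v_\lambda = 0$ is well-defined --- i.e., that the different resolutions of the coefficient identification all yield the same ratio $-q^{-1}$ --- is the most delicate point, and once it is in hand the closed form of $\varphi(c)$ follows by the one-line induction described above.
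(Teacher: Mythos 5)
Your proposal follows essentially the same route as the paper's own proof: reduce to $\lambda=\va_i$ by checking that the statistics ${}_1c$, ${}_2c$ (and the coefficients) are unchanged under $\s_k$ when $\langle h_k,\lambda\rangle\neq 0$, then extract from $e_kv_\lambda=0$ the relation $C^\lambda_{\mu,\xi}=\mqs\, C^\lambda_{\s_k\mu,\s_k\xi}$ for the long roots $i<k<n$ and the ratio $(-q)^{-1}$ at the short root $k=n$, and match these against the increments of ${}_1c$ and ${}_2c$, which forces $\varphi(c)=(-q)\mqs^{c-1}\varphi(c-1)$ and hence the closed form. The recursions and bookkeeping you describe agree with the paper's (the $k=n$ step is exactly the paper's ${}_1c_{\mu,\xi}={}_1c_{\s_n\mu,\s_n\xi}+{}_2c_{\mu,\xi}$, ${}_2c_{\mu,\xi}={}_2c_{\s_n\mu,\s_n\xi}-1$), so the argument is correct and not materially different.
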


\begin{proof} As in Proposition \ref{prop: c}, one can check that $C_{\s_k\mu,\s_k\xi}^{\s_k\lambda}=C_{\mu,\xi}^{\lambda}$ whenever
$\langle h_k,\lambda \rangle \neq 0$. Thus we can assume that
$\lambda=\va_{i}$. If $k \le i$, then $e_k u_\lambda=0$, trivially.
Thus, for $k > i$, we have
\begin{align*}
0= e_k v_\lambda  = \begin{cases} \sum_{\substack{ (\mu_{k},\mu_{k+1}) =(-,+) \\
(\xi_{k},\xi_{k+1}) =(+,-) }} C_{\mu,\xi}^{\lambda}(q^2)^{-1} u_{\s_k \mu}
\otimes u_{\xi} \\
 \qquad + \sum_{\substack{ (\mu_{k},\mu_{k+1}) =(+,-) \\
(\xi_{k},\xi_{k+1}) =(-,+) }}C_{\mu,\xi}^{\lambda}u_{\mu} \otimes
u_{\s_k \xi} \quad  \text{ if }  i< k <n, \\
\sum_{ (\mu_{n},\xi_n) =(-,+) } C_{\mu,\xi}^{\lambda}q^{-1}  u_{\s_n
\mu} \otimes u_{\xi}\\  \qquad   +\sum_{ (\mu_{n},\xi_n) =(+,-) }
u_{\mu} \otimes u_{\s_n \xi} \qquad \quad \ \  \text{ if }  k=n.
\end{cases}
\end{align*}
Thus we have
$$ C_{\mu,\xi}^{\lambda} =
\begin{cases}
-q^2 C_{\s_k\mu,\s_k\xi}^{\lambda}  & \text{ if } i < k <n \text{ and } (\mu_{k},\xi_{k}) =(-,+),  \\[1ex]
(-q)^{-1} C_{\s_n\mu,\s_n\xi}^{\lambda}  & \text{ if } k=n \text{
and } \mu_{n}=+. \end{cases}
$$
On the other hand, for $i< k <n$ and $(\mu_{k},\xi_{k}) =(-,+)$, we
have \begin{align*} & {}_1c_{\mu,\xi}^{\lambda} = \begin{cases}
{}_1c_{\s_k\mu,\s_k\xi}^{\lambda}-1 & \text{ if } i < k <n \text{ and } (\mu_{k},\xi_{k}) =(-,+),  \\
{}_1c_{\mu,\xi}^{\lambda} =
{}_1c_{\s_k\mu,\s_k\xi}^{\lambda}+{}_2c_{\mu,\xi}^{\lambda} & \text{
if } k=n \text{ and } \mu_{n}=+, \end{cases} \\
& {}_2c_{\mu,\xi}^{\lambda}=
\begin{cases}
{}_2c_{\s_k\mu,\s_k\xi}^{\lambda}  & \text{ if } i < k <n \text{ and } (\mu_{k},\xi_{k}) =(-,+),  \\
  {}_2c_{\mu,\xi}^{\lambda}={}_2c_{\s_k\mu,\s_k\xi}^{\lambda}-1 &
\text{ if } k=n \text{ and } \mu_{n}=+,\end{cases}\end{align*} which
yield our assertion.
\end{proof}

\begin{theorem} \label{thm: nnk}
For $k \le n-1$, there exists a surjective $U_q'(D^{(2)}_{n+1})$-module homomorphism
\begin{align} \label{eq: p ij 2}
  p_{n,k} \colon V(\varpi_n)_{\pm \sqrt{-1} \mqs^{-\frac{n-k}{2}}} \otimes V(\varpi_n)_{\mp \sqrt{-1} \mqs^{\frac{n-k}{2}}}
  \twoheadrightarrow V(\varpi_{k}).
  \end{align}
By taking dual, there exists an injective $U_q'(D^{(2)}_{n+1})$-module homomorphism
\begin{align}\label{eq: iota ij 2}
  \iota_{n,k} \colon V(\varpi_k)  \rightarrowtail V(\varpi_n)_{\pm \sqrt{-1} \mqs^{\frac{n-k}{2}}}
  \otimes V(\varpi_n)_{\mp \sqrt{-1} \mqs^{-\frac{n-k}{2}}}.
  \end{align}
\end{theorem}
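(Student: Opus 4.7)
The plan is to mirror the strategy of Theorem~\ref{thm: pij iota ij 1}: I will first construct the injection $\iota_{n,k}$ by lifting the $U_q(B_n)$-injection $V_0(\varpi_k) \hookrightarrow V_0(\varpi_n)\otimes V_0(\varpi_n)$ from Proposition~\ref{prop: c spin} to a $\uqpg$-homomorphism, and then obtain $p_{n,k}$ by taking duals.

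Take the dominant extremal vector $\lambda = \varpi_k$, so that $\lambda_1 = 1$ and $\langle h_0, \lambda\rangle = -2$. Since $\mu + \xi = \lambda$ forces $(\mu_1, \xi_1) = (+,+)$ in every summand of $v_\lambda = \sum C^\lambda_{\mu,\xi}\, u_\mu \otimes u_\xi$, the coproduct $\Delta(e_0) = e_0 \otimes K_0^{-1} + 1 \otimes e_0$ together with $K_0 u_\mu = q^{-1} u_\mu$ (using $q_0 = q$ and $\langle h_0, \mathrm{wt}(u_\mu)\rangle = -1$ when $\mu_1 = +$) yields, after a brief computation,
\[
e_0^{(2)}(u_\mu \otimes u_\xi) = xy \cdot u_{\s_0\mu} \otimes u_{\s_0\xi}.
\]
On the other hand, by Proposition~\ref{prop: c spin} both ${}_1 c^\lambda_{\mu,\xi}$ and ${}_2 c^\lambda_{\mu,\xi}$ are unchanged under the simultaneous flip of first coordinates $(+,+) \leftrightarrow (-,-)$ (in both configurations the first coordinate contributes nothing to either count), so $C^{\s_0\lambda}_{\s_0\mu,\s_0\xi} = C^\lambda_{\mu,\xi}$. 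Matching $\mathcal{S}_0 \cdot v_\lambda = v_{\s_0\lambda}$ then yields the first relation $xy = 1$.

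Unlike the vector-vector case of Proposition~\ref{prop: mor D 1}, the spin-spin setting admits only the one configuration $(\mu_1, \xi_1) = (+,+)$ at the affine node, so $\mathcal{S}_0$ applied to $\lambda = \varpi_k$ produces nothing beyond $xy = 1$. To pin down the ratio I will pass to the non-extremal weight $\lambda - \epsilon_1$: the same coproduct calculation gives
\[
e_0(u_\mu \otimes u_\xi) = xq \cdot u_{\s_0\mu} \otimes u_\xi + y \cdot u_\mu \otimes u_{\s_0\xi},
\]
and the requirement that $e_0 v_\lambda$ lie in the image of $V_0(\varpi_k)$ within the $(\lambda-\epsilon_1)$-weight subspace of $V_0(\varpi_n) \otimes V_0(\varpi_n)$ will force $y/x = -\mqs^{k-n}$ after bookkeeping the Proposition~\ref{prop: c spin} coefficients over the Weyl orbit $\mathsf{W}_0\varpi_k$. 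Combined with $xy = 1$, this yields $x^2 = -\mqs^{n-k}$, hence $x = \pm\sqrt{-1}\,\mqs^{(n-k)/2}$ and $y = \mp\sqrt{-1}\,\mqs^{-(n-k)/2}$; taking the dual then produces the surjection $p_{n,k}$ with the spectral parameters asserted in the statement. The main obstacle will be extracting the ratio $y/x = -\mqs^{k-n}$: the sign $-1$ responsible for the $\sqrt{-1}$ factor does not emerge from the extremal-vector computation alone, and isolating it requires careful bookkeeping of the $K_0^{\pm 1}$-eigenvalues against the sign structure of the $C^\lambda_{\mu,\xi}$'s across the relevant Weyl-orbit pairs.
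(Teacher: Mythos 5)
Your first relation is derived exactly as in the paper: for the dominant extremal weight $\lambda=\va_k$ one has $\mu_1=\xi_1=+$ in every summand, the coproduct gives $e_0^{(2)}(u_\mu\otimes u_\xi)=xy\,u_{\s_0\mu}\otimes u_{\s_0\xi}$, and both counts ${}_1c^{\lambda}_{\mu,\xi}$, ${}_2c^{\lambda}_{\mu,\xi}$ of Proposition \ref{prop: c spin} are unchanged, so $xy=1$. The gap is in the second relation, which you correctly identify as the crux but do not establish. The constraint you propose --- that $e_0 v_{\va_k}$ lie in the image of $V_0(\va_k)$ inside the $(\va_k-\epsilon_1)$-weight space --- is not a valid requirement: the $\uqpg$-submodule of $V(\va_n)_x\otimes V(\va_n)_y$ that is to be a copy of $V(\va_k)$ is strictly larger than $\Phi(V_0(\va_k))$ as a $U_q(B_n)$-module (it contains lower fundamental components), and the weight $\va_k-\epsilon_1$ already occurs in those lower components, so nothing forces $e_0v_{\va_k}$ to stay inside $\Phi(V_0(\va_k))$. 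Moreover your displayed formula for $e_0(u_\mu\otimes u_\xi)$ conflates the two configurations $(\mu_1,\xi_1)=(+,-)$ and $(-,+)$, which cannot both contribute for a single pair, and carries $xq$ where $\Delta(e_0)=e_0\otimes K_0^{-1}+1\otimes e_0$ produces $q^{-1}x$. As written, the sign responsible for the $\sqrt{-1}$ is asserted rather than derived.

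The paper obtains the missing relation from a different and genuinely constraining identity. Since $k\le n-1$, the orbit $\mathsf{W}_0\cdot\va_k$ contains extremal weights $\lambda$ with $\lambda_1=0$, i.e.\ $\langle h_0,\lambda\rangle=0$, and for such $\lambda$ the extremal-vector property forces $e_0v_\lambda=0$ (not merely a containment). Expanding $e_0v_\lambda$ splits the sum into the pairs with $(\mu_1,\xi_1)=(+,-)$ and $(-,+)$, and the vanishing yields $C^{\lambda}_{\s_0\mu,\s_0\xi}=(-q)^{-1}x^2\,C^{\lambda}_{\mu,\xi}$ after using $xy=1$; comparing with Proposition \ref{prop: c spin}, where ${}_1c$ increases by $\#\{b>1\mid(\mu_b,\xi_b)=(+,-)\}$ and ${}_2c$ by $1$, and using that the positions $a>1$ with $\mu_a\ne\xi_a$ number $n-k-1$, gives $x^2=-\mqs^{n-k}$. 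You would need to replace your containment argument by this vanishing condition (or an equivalent computation) to complete the proof; the final step of dualizing to get $p_{n,k}$ is then the same as in the paper.
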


\begin{proof}
We apply the same strategy of \S \ref{subsec: i+j=k}; i.e., we determine the $x$ and $y$ in \eqref{eq:coeff x,y}. As in Proposition
\ref{prop: mor D 1}, we first consider $\lambda \in \mathsf{W}_0 \cdot \va_k$ with $\lambda_1=1$ and hence $\mu_1=\xi_1=+$. In this
 case, we have
\begin{align*}
\mathcal{S}_0\cdot v_\lambda=v_{\s_0\lambda} =e_0^{(2)} v_\lambda = \sum C^{\lambda}_{\mu,\xi}
xy \ u_{\s_0 \mu} \otimes u_{\s_0\xi}.
\end{align*}
On the other hand
$${}_1c_{\mu,\xi}^{\lambda}= {}_1c_{\s_0\mu,\s_0\xi}^{\s_0\lambda}, \quad {}_2c_{\mu,\xi}^{\lambda} = {}_2c_{\s_0\mu,\s_0\xi}^{\s_0\lambda}.$$
 Thus we conclude that $$xy=1.$$
 Consider $\lambda \in \mathsf{W}_0 \cdot \va_{i}$ with $\langle h_0, \lambda \rangle
 =0$. Equivalently $\lambda_1=0$ and hence $-\mu_1=\xi_1$. In this case,
\begin{align*}
0=e_0v_\lambda & =\sum_{(\mu_1,\xi_1)=(+,-)}C_{\mu,\xi}^{\lambda}
q^{-1} x u_{\s_0\mu} \otimes u_\xi +
\sum_{(\mu_1,\xi_1)=(-,+)}C_{\mu,\xi}^{\lambda} y u_\mu \otimes
u_{\s_0\xi}.
\end{align*}
Thus, for $\mu_1=+$, we have
$$ C_{\s_0\mu,\s_0\xi}^{\lambda}=C_{\mu,\xi}^{\lambda} (-q)^{-1} \times \dfrac{x}{y} =C_{\mu,\xi}^{\lambda} (-q)^{-1} \times x^2.$$
On the other hand,
$${}_1c^{\lambda}_{\s_0\mu,\s_0\xi} =  {}_1c^{\lambda}_{\mu,\xi} + \# \{ b > 1 \mid (\mu_b,\xi_b)=(+,-) \} \ \text{ and } \
{}_2c^{\lambda}_{\s_0\mu,\s_0\xi} = {}_2c^{\lambda}_{\mu,\xi}+1.$$
Thus we have
$$ x^2=-\mqs^{n-k},$$
which yields our assertion.
\end{proof}

\subsection{$j=1$ and $i=k=n$ for $\g=A^{(2)}_{2n}$.} In this subsection, we show that there exists a surjective $U_q'(A^{(2)}_{2n})$-homomorphism
\begin{align} \label{eq: 1nn}
 V(\va_{n})_{(-q)^{-1}} \otimes V(\va_{1})_{(-q)^{n}} \twoheadrightarrow V(\va_{n}).
\end{align}
Indeed, we do not use \eqref{eq: 1nn} in this paper. However, for the forthcoming works, we present the existence of such a homomorphism.

Similar to the previous subsections, we determine the relations among $a$, $b$ and $c$ such that
\begin{align} \label{eq: 1nn before}
V(\va_{n})_a \rightarrowtail V(\va_{1})_b \otimes V(\va_{n})_c.
\end{align}
Recall that for $k \in I_0$ (see \cite[Table 1]{OS08}),
$$ V(\va_k) \simeq \soplus_{j=0}^{k} V_0(\va_j) \text{ as a $U_q(C_n)$-module.} $$
Here $V_0(\va_0)$ is the trivial $\uqpg$-module $\ko$. Thus
$$ \text{$V_0(\va_n)^{\oplus 2} \rightarrowtail V(\va_{1}) \otimes V(\va_{n})$ as a $U_q(C_n)$-module.}$$

The crystal graph of $V(\va_1)$ is given by (see \cite[Example 11.1.4]{HK02})
$$\xymatrix@C=7ex@R=0.5ex{
&1\ar[r]^-{1}&2\ar[r]^-{2}&\cdots\cdots\ar[r]^-{n-2}
&n-1\ar[r]^-{n-1}&n \ar[dd]^-{n}\\
\emptyset\ar[ur]^-{0}&&\\
&\overline{1}\ar[ul]^-{0}&
\overline{2}\ar[l]^-{1}&\cdots\cdots\cdots\ar[l]^-{2}&\overline{n-1}\ar[l]^-{n-2}
&\overline{n}\ar[l]^-{n-1}
}
$$
We denote by $$  \text{$\mathsf{u}$ the dominant integral weight vector of $V(\va_n)$ with its weight $\va_n=\sum_{i \in I_0} \epsilon_i$.}$$

For $i_1,\ldots,i_k,j_1,\ldots,j_l \in I_0$, we set
$\mathsf{u}[\overline{i_1},\ldots,\overline{i_l},\widehat{j_1},\ldots,\widehat{j_l}]$ the vector in $V_0(\va_{n-l})$ with
its weight given by
$$ {\rm wt}(\mathsf{u}[\overline{i_1},\ldots,\overline{i_k},\widehat{j_1},\ldots,\widehat{j_l}])={\rm wt}(\mathsf{u})-\sum_{s=1}^{k}2\epsilon_{i_s}-\sum_{t=1}^{l}\epsilon_{j_t},$$
if such a weight vector exists in $V_0(\va_{n-l})$.

The map \eqref{eq: 1nn before}, if it exists, sends $\mathsf{u}$ to the following vector, say $\mathsf{v}$:
$$ \mathsf{u} \longmapsto \mathsf{v} =
v_{\emptyset} \otimes \mathsf{u} + (-qb^{-1}c)\left( \sum_{k=1}^n
(-q)^{k-1} v_k \otimes \mathsf{u}[\widehat{k}]\right),$$ which is
unique (up to constant) in the sense that it satisfies $e_i
\mathsf{v}=0$ for $i \in I_0$, and $f_0 u=0$.

In $V(\va_{n})_a$, we have
\begin{align}\label{eq: coeff a}
 \mathcal{S}_0 \cdot \mathsf{u} = a \mathcal{S}_w \cdot \mathsf{u} \quad \text{ where } \mathcal{S}_w=\mathcal{S}_1\mathcal{S}_2\cdots\mathcal{S}_n
\text{ for }  w=\mathsf{s}_1\mathsf{s}_2\cdots \mathsf{s}_n \in \mathsf{W}_0.
\end{align}
On the other hand,
\begin{align*}
\mathcal{S}_0 \cdot \mathsf{v} & = e_0^{(2)}\mathsf{v}= c \ v_{\overline{1}} \otimes \mathsf{u}[\widehat{1}]
-q c \ v_{\emptyset} \otimes \mathsf{u}[\overline{1}] - qb^{-1}cd \sum_{k \ne 1} (-q)^{k-1} v_k \otimes \mathsf{u}[\overline{1},\widehat{k}], \allowdisplaybreaks \\
\mathcal{S}_w \cdot \mathsf{v} & = f_1^{(2)}f_2^{(2)}\cdots f_{n-1}^{(2)}f_n\mathsf{v} = v_\emptyset \otimes \mathsf{u}[\overline{1}]
+ (-qb^{-1}c) \left( \sum_{k \ne n} (-q)^{k-1} v_{k+1} \otimes \mathsf{u}[\overline{1},\widehat{k+1}] \right) \\ & \hspace{34ex}
+(-qb^{-1}c)(-q)^{n-1} v_{\overline{1}} \otimes \mathsf{u}[\widehat{1}],
\end{align*}
where $d$ is an element in $\ko^\times$ such that
\begin{align} \label{eq: d}
 e_0^{(2)} \mathsf{u}[\widehat{k}]=d \times \mathsf{u}[\overline{1},\widehat{k}] \quad \text{ for $k \ne 1$ in } V(\va_n)_c.
\end{align}
By \eqref{eq: coeff a}, we can conclude that
\begin{align} \label{eq: abc}a=-qc, \ \ b=a(-q)^n, \ \ d=c.\end{align}
Now, it suffices to show that $d=c=1$.

\begin{proposition} For $1 \ne k \in I_0$, the coefficient $d$ in \eqref{eq: d} must be equal to $1$; i.e.,
$$e_0^{(2)} \mathsf{u}[\widehat{k}]=\mathsf{u}[\overline{1},\widehat{k}] \quad \text{ in } V(\va_n)_c.$$
\end{proposition}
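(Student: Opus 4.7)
The plan is to show $e_0^{(2)}\mathsf{u}[\widehat{k}] = \mathsf{u}[\overline{1},\widehat{k}]$ in $V(\va_n)_c$ by first reducing to the single case $k=2$ via a Weyl-group commutation, then identifying both sides as the top and bottom vectors of a canonical three-dimensional $U_{q_0}(\mathfrak{sl}_2)$-string for the subalgebra $\langle e_0, f_0, K_0^{\pm 1}\rangle$.

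\emph{Reduction to $k=2$.} In the Dynkin diagram of $A^{(2)}_{2n}$ the affine node $0$ is adjacent only to node $1$, so $a_{0,j}=0$ for every $j\ge 2$. By the Serre relations, $e_j$, $f_j$ and $K_j^{\pm 1}$ all commute with $e_0$ for such $j$, and hence $\mathcal{S}_j$ commutes with $e_0^{(2)}$ on weight spaces. A weight-tracking argument (using that $\langle h_k,{\rm wt}(\mathsf{u}[\widehat{k}])\rangle=-1$ and $\langle h_k,{\rm wt}(\mathsf{u}[\overline{1},\widehat{k}])\rangle=-1$ for $2\le k\le n-1$) shows
$$\mathsf{u}[\widehat{k+1}]=\mathcal{S}_k \mathsf{u}[\widehat{k}] \quad\text{and}\quad \mathsf{u}[\overline{1},\widehat{k+1}]=\mathcal{S}_k \mathsf{u}[\overline{1},\widehat{k}] \qquad (2\le k \le n-1).$$
Hence applying $\mathcal{S}_{k-1}\cdots \mathcal{S}_2$ to the identity at $k=2$ propagates it to every $k\ge 2$.

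\emph{The base case $k=2$.} From the action of $e_0,f_0$ on $V(\va_1)$ recorded in Section 2, one reads off $\langle h_0,\epsilon_i\rangle = -2\delta_{i,1}$. Therefore ${\rm wt}(\mathsf{u}[\widehat{2}])$ pairs to $-2$ with $h_0$, while ${\rm wt}(\mathsf{u}[\overline{1},\widehat{2}])$ pairs to $+2$. Both weights are extremal for the $U_q(C_n)$-summand $V_0(\va_{n-1})\subset V(\va_n)$ and hence have multiplicity one in $V(\va_n)_c$. Consequently $e_0^{(2)}\mathsf{u}[\widehat{2}]$ is automatically a scalar multiple of $\mathsf{u}[\overline{1},\widehat{2}]$, and by the canonical structure of a three-dimensional irreducible $U_{q_0}(\mathfrak{sl}_2)$-representation---where $e^{(2)}$ sends the bottom vector of the string to the top---this scalar equals $1$ precisely when $\mathsf{u}[\widehat{2}]=f_0^{(2)}\mathsf{u}[\overline{1},\widehat{2}]$.

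\emph{The main obstacle} is verifying this canonical normalization. I would trace the definitions back to the dominant extremal weight vector $\mathsf{u}$: set $\mathsf{u}[\overline{1}]=e_0^{(2)}\mathsf{u}$, take $\mathsf{u}[\widehat{1}]$ to be the middle vector of the $e_0$-string through $\mathsf{u}$, and use $\mathsf{u}[\widehat{2}]=\mathcal{S}_1\mathsf{u}[\widehat{1}]=e_1\mathsf{u}[\widehat{1}]$ together with a parallel description of $\mathsf{u}[\overline{1},\widehat{2}]$ obtained from $\mathsf{u}[\overline{1}]$. The required identity then reduces to a small-rank relation between $e_0^{(2)}$, $e_1$, and the divided power $f_0^{(2)}$, which can be verified using the Serre relation for $a_{0,1}=-2$ (a cubic identity in $e_0$ and $e_1$). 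Tracking the quantum integer $[2]_{q_0}$ and the spectral-parameter dependence of $e_0$ in $V(\va_n)_c$ through this reduction confirms $d=1$, and via the earlier derivation $d=c$ this forces $c=1$ as well.
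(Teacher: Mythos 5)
Your proposal assembles the right ingredients---the reduction to $k=2$ using that the affine node $0$ is adjacent only to node $1$, the one-dimensionality of the relevant weight spaces, and the cubic Serre relation coming from $a_{01}=-2$ combined with $e_0^{(3)}=0$ on $V(\va_n)$---but it stops exactly where the content of the proposition begins. The base case is never established: the observation that $d=1$ is equivalent to $\mathsf{u}[\widehat{2}]=f_0^{(2)}\mathsf{u}[\overline{1},\widehat{2}]$ is only a restatement of the claim (on a three-dimensional $e_0$-string these two normalizations are inverse to one another), and the paragraph meant to verify it asserts that the identity ``can be verified using the Serre relation'' and that ``tracking the spectral-parameter dependence \dots confirms $d=1$'' without performing either computation. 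That computation is the whole proposition. The paper evaluates $e_0e_1e_0^{(2)}\mathsf{u}[\widehat{2}]$ in two ways: directly, using \eqref{eq: d} and $d=c$ from \eqref{eq: abc}, it equals $c[2]_0\,\mathsf{u}[\overline{1},\overline{2}]$; rewriting $e_0e_1e_0^{(2)}=e_1e_0^{(3)}+e_0^{(2)}e_1e_0-e_0^{(3)}e_1$ via the Serre relation, killing the $e_0^{(3)}$ terms, and using $e_0\mathsf{u}[\widehat{1}]=[2]_0\mathsf{u}[\overline{1}]$ (obtained from $f_1e_0=e_0f_1$), it equals $[2]_0\,\mathsf{u}[\overline{1},\overline{2}]$. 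Comparing forces $c=d=1$. Nothing equivalent appears in your argument, so as written the proof has a genuine gap.

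A second, related concern is the plan to ``set $\mathsf{u}[\overline{1}]=e_0^{(2)}\mathsf{u}$'' and to take $\mathsf{u}[\widehat{1}]$ as the middle of the $e_0$-string: this risks circularity. The vectors $\mathsf{u}[\cdots]$ are pinned down by the $U_q(C_n)$-structure (the extremal-vector recursion over $I_0$), and the constant $d$ measures precisely the discrepancy between that normalization and the $e_0$-action on $V(\va_n)_c$, which carries the spectral parameter (since $e_0(u_c)=c(e_0u)_c$, the divided power $e_0^{(2)}$ contributes $c^2$). Re-normalizing the vectors by the $e_0$-action changes the quantity being computed and empties the statement $d=c=1$ of content. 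The reduction from general $k$ to $k=2$ is sound and is a clean way to organize the general case, but without the explicit Serre-relation computation above the argument is incomplete.
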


\begin{proof}
By Definition \ref{Def: GKM} (3), we have
$$f_1e_0 \mathsf{u}[\widehat{2}]= e_0f_1\mathsf{u}[\widehat{2}]= e_0 \mathsf{u}[\widehat{1}]=[2]_0\mathsf{u}[\overline{1}].$$
Thus
$$e_1e_0\mathsf{u}[\widehat{2}]=[2]_0e_1^{(2)}\mathsf{u}[\overline{1}]=[2]_0\mathsf{u}[\overline{2}].$$
From the actions $e_i$ ($i \in I$) on $V(\va_n)_c$, we have
\begin{align}\label{eq: step 1 d}
e_0e_1e_0^{(2)}\mathsf{u}[\widehat{2}]=ce_0e_1 \mathsf{u}[\overline{1},\widehat{2}]=ce_0\mathsf{u}[\overline{2},\widehat{1}]
=c[2]_0\mathsf{u}[\overline{1},\overline{2}].
\end{align}
Since all vectors in $V(\va_n)$ are annihilated by the action $e_0^{(3)}$, the relation in Definition \ref{Def: GKM} (4) implies that
\begin{align}\label{eq: step 2 d}
e_0e_1e_0^{(2)}\mathsf{u}[\widehat{2}] = (e_1e_0^{(3)}+e_0^{(2)}e_1e_0-e_0^{(3)}e_1)\mathsf{u}[\widehat{2}]
=e_0^{(2)}e_1e_0\mathsf{u}[\widehat{2}] =
[2]_0e_0^{(2)}\mathsf{u}[\overline{2}]
=[2]_0u[\overline{1},\overline{2}].
\end{align}
From \eqref{eq: abc}, \eqref{eq: step 1 d} and \eqref{eq: step 2 d}, we can conclude that
$$ d=c=1.$$
\end{proof}

Now, we have the following theorem.

\begin{theorem}There exists a surjective $U_q'(A^{(2)}_{2n})$-module homomorphism
\begin{align} \label{eq: p 1nn}
  p_{1,n} \colon V(\varpi_n)_{(-q)^{-1}} \otimes V(\varpi_1)_{(-q)^{n}}
  \twoheadrightarrow V(\varpi_{n}).
  \end{align}
By taking dual, there exists an injective $U_q'(A^{(2)}_{2n})$-module homomorphism
\begin{align}\label{eq: iota 1nn}
  \iota_{1,n} \colon V(\varpi_n)  \rightarrowtail V(\varpi_1)_{(-q)^{n}}
  \otimes V(\varpi_n)_{(-q)^{-1}}.
  \end{align}
\end{theorem}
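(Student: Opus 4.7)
The plan is to construct $\iota_{1,n}$ explicitly from the candidate vector that has already been identified and then obtain $p_{1,n}$ by dualization. The preceding paragraphs have produced the crucial ingredient: the vector
$$\mathsf{v} = v_{\emptyset} \otimes \mathsf{u} + (-qb^{-1}c)\sum_{k=1}^n (-q)^{k-1} v_k \otimes \mathsf{u}[\widehat{k}]$$
in $V(\va_1)_b \otimes V(\va_n)_c$, characterized (up to scalar) by having weight $\va_n$ and being annihilated by $e_i$ for $i \in I_0$. Since $V(\va_n)_a$ is generated by its dominant extremal weight vector $\mathsf{u}$ and is characterized by the properties listed in Definition \ref{def: fundamental module}, verifying that $\mathsf{u} \mapsto \mathsf{v}$ extends to a $\uqpg$-homomorphism reduces to matching the action of $\mathcal{S}_0$ on $\mathsf{u}$ with that on $\mathsf{v}$. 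This matching produces exactly the constraints $a = -qc$, $b = a(-q)^n$, and $d = c$ recorded in \eqref{eq: abc}.

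Next, I will feed into this the previous proposition, which established $d = 1$ by a Serre-relation identity applied to $e_0 e_1 e_0^{(2)} \mathsf{u}[\widehat{2}]$. Combined with the constraints above, this forces $c = 1$, $a = -q$ and $b = (-q)^{n+1}$, yielding an injective $\uqpg$-homomorphism
$$V(\va_n)_{-q} \;\rightarrowtail\; V(\va_1)_{(-q)^{n+1}} \otimes V(\va_n).$$
Rescaling the spectral parameters uniformly by $(-q)^{-1}$ (which is an isomorphism of $\uqpg$-modules after relabeling) converts this into
$$V(\va_n) \;\rightarrowtail\; V(\va_1)_{(-q)^n} \otimes V(\va_n)_{(-q)^{-1}},$$
which is precisely $\iota_{1,n}$.

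Finally, for $p_{1,n}$ I will apply the duality functor to $\iota_{1,n}$. Using the natural isomorphism $(A \otimes B)^* \simeq B^* \otimes A^*$ together with the duality formulas \eqref{eq: dual} (and noting that $i^* = i$ for all $i \in I_0$ in type $A^{(2)}_{2n}$, since $w_0$ acts as $-1$ on the $C_n$ weight lattice), the left dual of $\iota_{1,n}$ becomes a surjective $\uqpg$-homomorphism whose spectral parameters, after a uniform shift absorbing the factor $p^*$ from \eqref{eq: dual}, match those in $p_{1,n}$; surjectivity follows from irreducibility of $V(\va_n)$. The main obstacle has already been surmounted in the preceding proposition, where the identity $d = c = 1$ required a careful use of the quantum Serre relation; what remains is the standard bookkeeping of spectral shifts under duality, plus the routine verification that $\mathsf{v}$ satisfies the hypotheses of Definition \ref{def: fundamental module} needed to extend $\mathsf{u} \mapsto \mathsf{v}$ to a $\uqpg$-module map.
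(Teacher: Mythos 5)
Your proposal is correct and follows essentially the same route as the paper: the theorem is exactly the conclusion of the preceding subsection, where the vector $\mathsf{v}$, the constraints $a=-qc$, $b=a(-q)^n$, $d=c$ from matching $\mathcal{S}_0\cdot\mathsf{u}=a\,\mathcal{S}_w\cdot\mathsf{u}$, and the Serre-relation computation forcing $d=c=1$ are established, after which only the uniform spectral rescaling and the dualization bookkeeping (using $i^*=i$ for $C_n$ and the $p^*$ shift) remain. You have supplied precisely those remaining routine steps, which the paper leaves implicit.
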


\subsection{$D^{(2)}_{n+1}$-analogue of the surjective homomorphisms given in \cite[(A.17)]{KKK13b}} This subsection is devoted to prove the following
lemma.

\begin{lemma} \label{lemma: k,l to n,n}
Let  $\eta, \eta' \in  \{ \sqrt{-1},-\sqrt{-1} \}$ and $1 \le k,l
\leq n-1$ such that $k+ l = n$. Then there exists a surjective
$U_q'(D^{(2)}_{n+1})$-module homomorphism
\begin{align*}
    V(\varpi_k)_{\eta\mqs^{-\frac{l}{2}}} \otimes V(\varpi_l)_{\eta'\mqs^{\frac{k}{2}}}
    \twoheadrightarrow V(\varpi_{n})_{-1} \otimes V(\varpi_{n}).
\end{align*}
\end{lemma}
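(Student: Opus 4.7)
The plan is to realize both sides of the desired surjection inside a common 4-fold tensor product of spin representations $V(\va_n)$ by invoking Theorem \ref{thm: nnk}, and then use a contraction/reorganization of the inner two factors (via normalized $R$-matrices and the duality structure of $V(\va_n)$) to isolate the target $V(\va_n)_{-1} \otimes V(\va_n)$.

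First, I would shift the injections $\iota_{n,k}$ and $\iota_{n,l}$ from Theorem \ref{thm: nnk}. Since $n-k=l$ and $\eta^2=-1$, taking $\iota_{n,k}$ with sign $\pm=\eta$ and twisting by the spectral parameter $\eta\mqs^{-l/2}$ yields
\[
V(\va_k)_{\eta\mqs^{-l/2}} \hookrightarrow V(\va_n)_{-1} \otimes V(\va_n)_{\mqs^{-l}}.
\]
A symmetric calculation with $\iota_{n,l}$ (using $n-l=k$ and $(\eta')^2=-1$) gives
\[
V(\va_l)_{\eta'\mqs^{k/2}} \hookrightarrow V(\va_n)_{-\mqs^{k}} \otimes V(\va_n)_{1}.
\]
Tensoring produces an injective $U_q'(\g)$-homomorphism of $V(\va_k)_{\eta\mqs^{-l/2}} \otimes V(\va_l)_{\eta'\mqs^{k/2}}$ into the 4-fold product $V(\va_n)_{-1} \otimes V(\va_n)_{\mqs^{-l}} \otimes V(\va_n)_{-\mqs^{k}} \otimes V(\va_n)_{1}$.

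Next, I would construct a $U_q'(\g)$-module surjection from this 4-fold tensor onto $V(\va_n)_{-1} \otimes V(\va_n)$ by reducing the inner pair $V(\va_n)_{\mqs^{-l}} \otimes V(\va_n)_{-\mqs^{k}}$. The inner pair has spectral-parameter ratio $-\mqs^{k+l}=-\mqs^{n}$. One examines the denominator of $\Rnorm_{V(\va_n),V(\va_n)}(z)$ for $\g=D^{(2)}_{n+1}$ at $z=-\mqs^{n}$, together with the duality $V(\va_n)^*=V(\va_n)_{(p^*)^{-1}}$, and builds a non-zero $\uqpg$-homomorphism whose composition with the identity on the outer factors yields the required morphism into $V(\va_n)_{-1}\otimes V(\va_n)_{1}=V(\va_n)_{-1}\otimes V(\va_n)$. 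Composing with the injection from Step 1 produces
\[
\Phi\colon V(\va_k)_{\eta\mqs^{-l/2}} \otimes V(\va_l)_{\eta'\mqs^{k/2}} \longrightarrow V(\va_n)_{-1} \otimes V(\va_n).
\]
To conclude, surjectivity of $\Phi$ is verified by tracking the image of the tensor $\mathsf{v}_{\va_k}\otimes \mathsf{v}_{\va_l}$ of dominant extremal weight vectors through both steps, using the explicit basis $\mathsf{B}_{\mathrm{sp}}$ described in Section \ref{Append: vector spin reps}; non-vanishing of the image together with the fact that $V(\va_n)_{-1}\otimes V(\va_n)$ is generated as a $\uqpg$-module by its extremal weight vectors forces $\Phi$ to be surjective.

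The main obstacle is the second step. Computing $p^*=(-1)^{\langle\rho^\vee,\delta\rangle}q^{(\rho,\delta)}$ explicitly for $D^{(2)}_{n+1}$ shows that the ratio $-\mqs^{n}$ need not coincide with $(p^*)^{\pm1}$, so the contraction cannot be obtained as a naive singlet evaluation. Instead, one must either exploit the singularity structure of $\Rnorm_{V(\va_n),V(\va_n)}(z)$ at $z=-\mqs^{n}$ together with additional $R$-matrix swaps between the outer and inner factors, or construct the needed $\uqpg$-homomorphism by direct combinatorial manipulation on $\mathsf{B}_{\mathrm{sp}}$, mirroring the proof of Theorem \ref{thm: nnk}. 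Establishing that one of these routes yields a surjection with the exact codomain $V(\va_n)_{-1}\otimes V(\va_n)$ is the most delicate part of the argument.
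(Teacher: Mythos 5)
Your overall strategy is the same as the paper's: embed each factor into a pair of spin representations via Theorem \ref{thm: nnk}, contract the two middle spin factors, and conclude surjectivity from non-vanishing. However, the step you flag as ``the most delicate part'' rests on a miscalculation. For $\g=D^{(2)}_{n+1}$ one has $p^*=-\mqs^{n}$ exactly (see Table \ref{table:p^*}), and the ratio of the spectral parameters of your inner pair is $-\mqs^{k}/\mqs^{-l}=-\mqs^{k+l}=-\mqs^{n}=p^*$. Hence $V(\va_n)_{\mqs^{-l}}$ and $V(\va_n)_{-\mqs^{k}}$ are genuinely dual to each other by \eqref{eq: dual} (the spin node is fixed by the involution $^*$), and the contraction \emph{is} the naive evaluation map ${\rm tr}$. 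The detours you propose --- exploiting poles of $\Rnorm_{n,n}(z)$ at $z=-\mqs^{n}$ or a direct combinatorial construction on $\mathsf{B}_{\rm sp}$ --- are unnecessary; the paper simply sets $\varphi=({\rm id}\otimes{\rm tr}\otimes{\rm id})\circ(\psi_1\otimes\psi_2)$.

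A second, smaller gap is in your surjectivity argument. Non-vanishing of $\Phi$ forces surjectivity because $V(\va_n)_{-1}\otimes V(\va_n)$ is \emph{irreducible}; ``generated by its extremal weight vectors'' is not by itself a valid reason, since the image need not contain such a generator a priori. Moreover, checking non-vanishing on $\mathsf{v}_{\va_k}\otimes\mathsf{v}_{\va_l}$ is not obviously sufficient: after the trace contraction the relevant matrix coefficient could vanish. The paper instead evaluates $\varphi$ on $v\otimes w$ where $v$ is the $U_q(B_n)$-\emph{lowest} weight vector of $V(\va_k)$ and $w$ the $U_q(B_n)$-\emph{highest} weight vector of $V(\va_l)$, and shows (following \cite[Lemma A.3.2]{KKK13b}) that the image equals ${\rm tr}(u_{-\va_n}\otimes u_{\va_n})\,v_1\otimes w_1$ modulo complementary weight spaces, which is visibly non-zero. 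You should adopt this choice of test vectors, or otherwise justify that your choice survives the contraction.
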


\begin{proof}
Note that $\eta/\eta' = \pm 1$. By Theorem \ref{thm: nnk}, there are two injective $U_q'(D^{(2)}_{n+1})$-homomorphisms
\begin{align*}
&  \psi_1\colon  V(\varpi_{k})_{\eta\mqs^{-\frac{l}{2}}}
  \rightarrowtail V(\varpi_{n})_{-1} \otimes V(\varpi_{n})_{\mqs^{\frac{-n+k-l}{2}}}, \\[1ex]
&   \psi_2\colon V(\varpi_{l})_{\eta'\mqs^{\frac{k}{2}}}
   \rightarrowtail V(\varpi_{n})_{-\mqs^{\frac{n+k-l}{2}}}\otimes V(\varpi_{n}),
\end{align*}
by taking dual. Then we can obtain
$\varphi= ({\rm id}_{V(\varpi_{n})_{-1}} \otimes {\rm tr} \otimes {\rm id}_{V(\varpi_{n})_{-1} \otimes}) \circ(\psi_1\tens \psi_2)$,
$$ \varphi: V(\varpi_{k})_{\eta\mqs^{-\frac{l}{2}}} \otimes V(\varpi_{l})_{\eta'\mqs^{\frac{k}{2}}}
\longrightarrow V(\varpi_{n})_{-1} \otimes V(\varpi_{n}),$$
since $V(\varpi_{n})_{\mqs^{\frac{-n+k-l}{2}}}$ and $V(\varpi_{n})_{-\mqs^{\frac{n+k-l}{2}}}$ are dual to each other.

Applying the argument of \cite[Lemma A.3.2]{KKK13b}, we have
$$\varphi(v\tens w)
\equiv{\rm tr}(u_{-\va_{n}}\tens u_{\va_{n}})v_1\tens w_1 \mod
\soplus_{\lambda\not= -\va_k +\va_{n}}\big(V(\va_n)_{-1}\big)_\lambda\tens
V(\va_n)_{-\va_k+\va_l-\lambda},$$
where
\begin{itemize}
\item $v$ is the $U_q(B_n)$-lowest weight vector of $V(\va_k)_{\eta\mqs^{-\frac{l}{2}}}$ of weight $-\va_k$,
\item $w$ is the $U_q(B_n)$-highest weight vector of $V(\va_l)_{\eta'\mqs^{\frac{k}{2}}}$ of weight $\va_l$,
\item $v_1$ is a non-zero vector of $V(\va_n)_{-1}$ of weight $-\va_k+\va_{n}$,
\item $w_1$ is a non-zero vector of $V(\va_n)$ of weight $\va_l-\va_{n}$.
\end{itemize}
Thus $\varphi$ is non-zero. Then our assertion follows from the fact that $V(\va_n)_{-1}\tens V(\va_n)$ is irreducible.
\end{proof}

\section{The computation of denominators between fundamental representations}
For simplicity, we write $\Rnorm_{k,l}$ for $\Rnorm_{V(\va_k),V(\va_l)}$ in \eqref{Def: R(MN)}, $d_{k,l}$ for $d_{V(\va_k),V(\va_l)}$ in \eqref{Def: d(MN)}
and  $a_{k,l}$ for $a_{V(\va_k),V(\va_l)}$ in \eqref{Def: a(MN)}.

By the result of \cite[Appendix A]{AK} and \cite{Chari}, the denominator $d_{k,l}(z)$ and the element $a_{k,l}(z) \in \ko(z)$ are symmetric
with respect to the indices $k$ and $l$; that is,
\begin{align} \label{eq: symmetric}
 d_{k,l}(z)=d_{l,k}(z) \quad\text{and}\quad a_{k,l}(z)=a_{l,k}(z).
\end{align}

\subsection{General framework.} In this subsection, we propose the strategy for computing $d_{k,l}(z)$, which is originated from
\cite[Appendix A]{KKK13b}.

Note that we have a surjective homomorphism
\begin{equation} \label{p(k-1,1)}
 p_{l-1,1} \colon V(\varpi_{l-1})_{(-q^t)^{-1/t}} \otimes V(\varpi_1)_{(-q^t)^{l-1/t}} \twoheadrightarrow V(\varpi_l)
\quad \text{ if }  l \le n-\vartheta,
\end{equation}
by the previous section.

\begin{assumption}\label{Assum: framework} Assume the followings:
\begin{itemize}
\item[(A)] We know $a_{k,l'}(z)$ for $k \in I_0$ and $l' \le l-1$.
\item[(B)] We know $d_{1,1}(z)$ for all $\g$, and $d_{1,n}(z)$ for $\g=B^{(1)}_{n}$ or $\g=D^{(2)}_{n+1}$.
\end{itemize}
\end{assumption}

With these assumptions and \eqref{eq: Runiv property}, consider the following commutative diagram:
\begin{align} \label{Diagram: Runiv}
    \xymatrix@R=6.5ex{
    V(\varpi_k) \otimes V(\varpi_{l-1})_{(-q^t)^{-1/t}z} \otimes V(\varpi_{1})_{(-q^t)^{l-1/t}z}
    \ar[rr]^{\qquad \qquad \quad V(\varpi_k)\otimes p_{l-1,1}} \ar[d]_{\Runiv_{k,l-1}((-q^t)^{-1/t}z) \otimes V(\varpi_{1})_{(-q^t)^{l-1/t}z}} &&
    V(\varpi_{k}) \otimes V(\varpi_{l})_{z} \ar[dd]_{\Runiv_{k,l}(z)} \\
    V(\varpi_{l-1})_{(-q^t)^{-1/2}z}  \otimes V(\varpi_{k}) \otimes V(\varpi_{1})_{(-q^t)^{l-1/2}z}
    \ar[d]_{V(\varpi_{l-1})_{(-q^t)^{-1/t}z} \otimes \Runiv_{k,1}((-q^t)^{l-1/t}z)} &\\
    V(\varpi_{l-1})_{(-q^t)^{-1/t}z} \otimes V(\varpi_{1})_{(-q^t)^{l-1/t}z}  \otimes V(\varpi_{k})\ar[rr]^{\qquad \qquad \qquad p_{l-1,1}\otimes V(\varpi_k) } &&
    V(\varpi_{l})_{z} \otimes V(\varpi_{k}).
    }
\end{align}
Then we have
\begin{align}\label{Diagram: Runiv2}
   \xymatrix@R=3.5ex{
    v_{[1,\ldots,k]} \otimes v_{[1,\ldots, l-1]} \otimes v_{l}
    \ar@{|-_{>}}[r] \ar@{|-_{>}}[d] &
    v_{[1,\ldots,k]} \otimes v_{[1,\ldots, l-1,l]} \ar@{|-_{>}}[dd]\\
    a_{k,l-1}((-q^t)^{-1/t}z) v_{[1,\ldots, l-1]} \otimes v_{[1,\ldots,k]} \otimes v_{l}
    \ar@{|-_{>}}[d] &\\
*++{a_{k,l-1}((-q^t)^{-1/t}z) a_{k,1}((-q^t)^{l-1/t}z)
         v_{[1,\ldots, l-1]} \otimes w} \ar@{|-_{>}}[r] &
   a_{k,l}(z) v_{[1,\ldots, l-1,l]} \otimes v_{[1,\ldots,k]},
    }
\end{align}
where \begin{itemize}
\item $v_{[1,\ldots, a]}$ is the dominant extremal weight vector of $V(\va_{a})$ for $a \in I_0$,
\item $w=\Rnorm_{k,1}((-q^t)^{l-1/t}z)( v_{[1,\dots,k]} \otimes v_l)$.
\end{itemize}
By observing the vector $w$, we can get an equation explaining the relationship between
$$a_{k,l-1}(-q^{-1}z) a_{k,1}((-q)^{l-1}z) \quad \text{ and } \quad a_{k,l}(z).$$
By Assumption \ref{Assum: framework} (A), we can compute $a_{k,l}(z)$ by using an induction.

After getting $a_{k,l}(z)$, we use the formulas in Lemma \ref{lem:dvw avw},
by applying two surjective homomorphisms in Section \ref{sec: sujective homo}
\begin{align}
&   p_{k-1,1} \colon V(\varpi_{k-1})_{(-q^t)^{-1/t}} \otimes V(\varpi_{1})_{(-q^t)^{k-1/t}}   \twoheadrightarrow V(\varpi_{k}), \label{eq:first surj} \\
&   p^*_{k-1,1} \colon V(\varpi_k)_{(-q^t)^{-1/t}} \otimes V(\varpi_1)_{(p^*)(-q^t)^{-k/t}} \rightarrow V(\varpi_{k-1}), \label{eq:second surj}
\end{align}
and setting $W=V(\va_l)$ or $V(\va_n)$, to get two elements in $\ko[z^{\pm 1}]$ which are described in terms of $d_{k,l}(z)$'s and $a_{k,l}(z)$'s.
Here \eqref{eq:second surj} is the composition of $U_q'(\g)$-homomorphisms given as follows:
\begin{align*}
V(\varpi_k)_{(-q^t)^{-1/t}} \otimes V(\varpi_1)_{(p^*)(-q^t)^{-k/t}} & \hookrightarrow
V(\varpi_{k-1}) \otimes V(\va_1)_{(-q^t)^{-k/t}} \otimes  V(\varpi_1)_{(p^*)(-q^t)^{-k/t}} \\
& \hspace{-8ex} \twoheadrightarrow V(\varpi_{k-1}) \otimes \ko \simeq V(\varpi_{k-1}).
\end{align*}

Since we know the forms of $a_{k,l}(z)$'s, two elements in $\ko[z^{\pm 1}]$ can be described in terms of
$d_{k,l}(z)$'s and polynomials in $\ko[z]$ (up to constant multiple of $\ko[z^{\pm 1}]^\times$).

By the assumptions, we know $d_{1,1(z)}$, $d_{1,n}(z)$ and hence we can
compute $d_{k,l}(z)$ and $d_{k,n}(z)$, by manipulating the two elements in $\ko[z^{\pm 1}]$ and using inductions.

\bigskip

The denominator $d_{1,1}(z)$ of $\Rnorm_{1,1}(z): V(\va_1) \otimes V(\va_1)_z \to V(\va_1)_z \otimes V(\va_1)$ are computed in \cite{KMN2}
(see also \cite{HYZ} for $\g=A^{(2)}_{2}$) as follows:
\begin{align} \label{eq: denominator d11}
  d_{1,1}(z)=(z^t-(q^2)^t)(z^t-(p^*)^t).
\end{align}

The denominator $d_{1,n}(z)$ of $\Rnorm_{1,n}(z): V(\va_1) \otimes V(\va_n)_z \to V(\va_n)_z \otimes V(\va_1)$ for $\g=B^{(1)}_{n}$
is computed in \cite{DO96} as follows:
\begin{align}\label{eq:deno1n B}
d_{1,n}(z)=d_{n,1}(z)= z-(-1)^{n+1}q_s^{2n+1}.
\end{align}

Considering Assumption \eqref{Assum: framework}, the only missing part is the denominator $d_{1,n}(z)$ for $\g=D^{(2)}_{n+1}$.

\subsection{The denominator $d_{1,n}(z)$ for $\g=D^{(2)}_{n+1}$.} To compute the denominator $d_{1,n}(z)$ for $\g=D^{(2)}_{n+1}$,
we follow the notations and arguments given in \cite[Section 4]{KMN2}.

\medskip

By the $U_q'(D^{(2)}_{n+1})$-module structure of $V(\va_1)$ and $V(\va_n)$ in Section \ref{Append: vector spin reps}, we have
$$V(\va_1) \simeq V_0(\varpi_1) \oplus V_0(0)\text{ and }  V(\va_n) \simeq
V_0(\va_n) \text{ as $U_q(B_n)$-modules.}$$
Here $V_0(\va_n)$ (resp. $V_0(0)$) is the highest $U_q(B_n)$-module with the highest weight $\va_n$ (resp. $0$).
Thus we have
$$V(\va_n) \otimes V(\va_1) \simeq V_0(\lambda) \oplus V_0(\varpi_n)^{\oplus 2}
\text{ as a $U_q(B_n)$-module}, $$
where $\lambda=(\frac{3}{2},\frac{1}{2},\ldots,\frac{1}{2})$.
Let
$$m_n^+=(+,\ldots,+) \quad \text{and} \quad  m^i=(+,\ldots,+, \overset{i}{-},+,\ldots,+) \quad (1 \le i \le n)$$ be the elements in
$V(\varpi_n)$. Then we have the following lemmas by the direct
calculation:

\begin{lemma} \label{Lem: h.w.v} Let
$u_\lambda$, $u_{\varpi_n}^1$ and $u_{\varpi_n}^2$ be the
$U_q(B_n)$-highest weight vectors with the weight $\lambda$,
$\varpi_n$ and $\varpi_n$ in $V(\va_n)_x \otimes V(\va_1)_y$ respectively. Then we have
\begin{enumerate}
\item[{\rm (a)}] $u_{\lambda}=(m_n^+) \otimes v_1$,
\item[{\rm (b)}] $ u_{\varpi_n}^1=  [2]_0^{-1}  (m_n^+) \otimes v_\emptyset$,
\item[{\rm (c)}] $u_{\varpi_n}^2=\sum_{k=1}^n (-1)^k q^{2k} (m^{n+1-k}) \otimes v_{n+1-k} + [2]_n^{-1} (m_n^+) \otimes v_0$.
\end{enumerate}
\end{lemma}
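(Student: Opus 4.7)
The plan is to prove the lemma by direct computation: verify that $e_i u = 0$ for every $i \in I_0 = \{1,\ldots,n\}$ using the coproduct $\Delta(e_i) = e_i \otimes K_i^{-1} + 1 \otimes e_i$ together with the explicit formulas for $e_i$, $f_i$ and $K_i$ on the spin representation $V(\va_n)$ and on the vector representation $V(\va_1)$ recorded in Section~\ref{Append: vector spin reps}. The spectral parameters $x,y$ only affect the action of $e_0$ and $f_0$, so they are irrelevant for checking $U_q(B_n)$-highest weight. The weight assertions are immediate from ${\rm wt}(m_n^+) = \va_n$, ${\rm wt}(m^j) = \va_n - \epsilon_j$, ${\rm wt}(v_j) = \epsilon_j$ and ${\rm wt}(v_0) = {\rm wt}(v_\emptyset) = 0$.

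Parts~(a) and~(b) are essentially immediate. In~(a), $m_n^+$ is the $U_q(B_n)$-highest weight vector of $V(\va_n)$ and $v_1$ is the $U_q(B_n)$-highest weight vector of the $V_0(\va_1)$-summand of $V(\va_1)$, so $e_i m_n^+ = 0 = e_i v_1$ for all $i \in I_0$. In~(b), the vector $v_\emptyset$ spans the trivial $U_q(B_n)$-summand of $V(\va_1)$, and a glance at the $D^{(2)}_{n+1}$ table shows that only $e_0$ acts non-trivially on $v_\emptyset$; hence $e_i(m_n^+ \otimes v_\emptyset) = 0$ trivially for $i\in I_0$.

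Part~(c) is where the actual content lies. For $1 \le i \le n-1$, when I expand $\Delta(e_i)u_{\va_n}^2$, only the summands with $j = i$ and $j = i+1$ in the rewriting $\sum_{j=1}^{n} (-1)^{n+1-j} q^{2(n+1-j)} m^{j} \otimes v_{j}$ survive: the $j=i$ piece contributes through $e_i \otimes K_i^{-1}$ (since $e_i m^i = m^{i+1}$ while $e_i v_i = 0$), and the $j = i+1$ piece contributes through $1 \otimes e_i$ (since $e_i m^{i+1} = 0$ while $e_i v_{i+1} = v_i$); the term $[2]_n^{-1} m_n^+ \otimes v_0$ contributes nothing because $e_i m_n^+ = 0$ and $e_i v_0 = 0$ for $i \neq n$. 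Both surviving contributions land on $m^{i+1} \otimes v_i$, and the ratio $-q^2$ between consecutive coefficients $(-1)^{n+1-j} q^{2(n+1-j)}$ as $j$ moves from $i$ to $i+1$ precisely matches the eigenvalue of $K_i^{-1}$ on $v_i$ in the $D^{(2)}_{n+1}$-normalization, yielding the required cancellation. For $i = n$, the only surviving contribution from the sum comes from $j=n$ (since $e_n m^j \ne 0$ forces $j=n$, while $e_n v_n = 0$), whereas the exceptional summand contributes $[2]_n^{-1} m_n^+ \otimes e_n v_0 = [2]_n^{-1} \cdot [2]_n \, m_n^+ \otimes v_n = m_n^+ \otimes v_n$; the calibration $[2]_n^{-1}\cdot[2]_n = 1$ is exactly what is required to cancel $-q^2$ times the $K_n^{-1}$-eigenvalue on $v_n$.

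The main obstacle is tracking the $K_i^{-1}$-eigenvalues in the $D^{(2)}_{n+1}$ normalization and matching signs and $q$-powers across the telescoping cancellations; in particular, one must verify that $q_i^{\langle h_i,\epsilon_i\rangle}=q^{2}$ for all $i\in I_0$, so that the factor $(-q^2)$ controlling the coefficients in $u_{\va_n}^2$ is exactly the ratio needed to annihilate both the generic terms ($i<n$) and the boundary term ($i=n$).
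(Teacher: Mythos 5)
Your proof is correct and takes the same route as the paper, which states this lemma with no more justification than ``by the direct calculation'': your verification of $\Delta(e_i)u=0$ for $i\in I_0$ using the explicit tables of Section~\ref{Append: vector spin reps}, with the key calibration $K_i v_i = q_i^{\langle h_i,\epsilon_i\rangle}v_i = q^2 v_i$ for all $i\in I_0$ and the boundary term $e_n v_0=[2]_n v_n$ absorbing the factor $[2]_n^{-1}$, is exactly that calculation. (One cosmetic slip: as $j$ increases from $i$ to $i+1$ the coefficient ratio is $-q^{-2}$, the reciprocal of the $-q^2$ you name, but the cancellation condition $c_i\, q^{-2}+c_{i+1}=0$ you actually use is the correct one.)
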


\begin{lemma} \label{Lem: h.w.v_2} Let
$\tilde u_\lambda$, $ \tilde u_{\varpi_n}^1$ and $ \tilde u_{\varpi_n}^2$ be the $U_q(B_n)$-highest weight vectors with the weight
$\lambda$, $\varpi_n$ and $\varpi_n$ in $V(\va_1)_y \otimes V(\va_n)_x$,  respectively. Then we have
\begin{enumerate}
\item[{\rm (a)}] $\tilde u_{\lambda}=(1) \otimes (m_n^+) $,
\item[{\rm (b)}] $\tilde u_{\varpi_n}^1=  [2]_0^{-1}  v_\emptyset \otimes (m_n^+)$,
\item[{\rm (c)}] $\tilde u_{\varpi_n}^2=\sum_{k=1}^n (-1)^{n+1-k} q^{-2(n+1-k)} v_k  \otimes (m^{k}) + q^{-1} [2]_n^{-1}  v_0 \otimes (m_n^+) $.
\end{enumerate}
\end{lemma}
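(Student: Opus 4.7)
The plan is to verify by direct computation that each of the three proposed vectors is a $U_q(B_n)$-highest weight vector of the claimed weight. Since $V(\va_1)\simeq V_0(\va_1)\oplus V_0(0)$ and $V(\va_n)\simeq V_0(\va_n)$ as $U_q(B_n)$-modules, we have $V(\va_1)\otimes V(\va_n)\simeq V_0(\lambda)\oplus V_0(\va_n)^{\oplus 2}$ as $U_q(B_n)$-modules, so the space of $B_n$-highest weight vectors is three-dimensional (one of weight $\lambda$, two of weight $\va_n$). Therefore once each candidate is shown to be highest weight and the three are linearly independent, the lemma is complete. Linear independence is immediate from the weights and the fact that $\tilde u_{\va_n}^1$ is the only candidate supported on $v_\emptyset$. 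Note that the spectral parameters $x,y$ play no role, since they affect only the actions of $e_0,f_0$; throughout we may argue as if $V(\va_1)\otimes V(\va_n)$ is an ordinary $U_q(B_n)$-module.

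Weight verification is routine: using $\mathrm{wt}(v_1)=\epsilon_1$, $\mathrm{wt}(v_\emptyset)=\mathrm{wt}(v_0)=0$, $\mathrm{wt}(v_k)=\epsilon_k$, $\mathrm{wt}(m_n^+)=\va_n$, and $\mathrm{wt}(m^k)=\va_n-\epsilon_k$, the three candidates have weights $\epsilon_1+\va_n=\lambda$, $\va_n$, and $\va_n$, respectively. Parts (a) and (b) are then immediate: reading off the action table in Section~\ref{Append: vector spin reps}, both $v_1$ and $v_\emptyset$ are annihilated by every $e_i$ with $i\in I_0$ (only $e_0$ acts nontrivially on them), and $m_n^+$ is the dominant extremal weight vector of $V(\va_n)$. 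The coproduct $\Delta(e_i)=e_i\otimes K_i^{-1}+1\otimes e_i$ therefore kills $v_1\otimes m_n^+$ and $v_\emptyset\otimes m_n^+$. The overall scalar $[2]_0^{-1}$ in (b) is just a normalization chosen for later use.

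The content is in (c). The $\va_n$-weight space is spanned by $\{v_k\otimes m^k: 1\le k\le n\}\cup\{v_0\otimes m_n^+,\ v_\emptyset\otimes m_n^+\}$, and we restrict to vectors with no $v_\emptyset$ contribution (that direction is handled by $\tilde u_{\va_n}^1$). For $1\le i\le n-1$, the only nontrivial effects are $e_iv_{i+1}=v_i$ and $e_im^i=m^{i+1}$; applying $\Delta(e_i)$ to $\sum_k a_k\,v_k\otimes m^k$ produces a single term of the form $\bigl(q_i^{-\langle h_i,\,\va_n-\epsilon_{i+1}\rangle}a_{i+1}+a_i\bigr)v_i\otimes m^{i+1}$, and the vanishing of the bracket yields the recurrence $a_{i+1}/a_i=-q^{-2}$, matching the stated coefficients $a_k=(-1)^{n+1-k}q^{-2(n+1-k)}$ (up to an overall constant which is fixed by the normalization of $\tilde u_{\va_n}^2$). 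For $i=n$, the action table gives $e_nv_{\overline n}=v_0$, $e_nv_0=[2]_n v_n$, and $e_nm^n=m_n^+$; applying $\Delta(e_n)$ to the full expression gives a term in $v_n\otimes m_n^+$ in which the contribution from $v_n\otimes m^n$ and from $v_0\otimes m_n^+$ must cancel, and this relation fixes the coefficient of $v_0\otimes m_n^+$ to be $q^{-1}[2]_n^{-1}a_n$, i.e.\ $q^{-1}[2]_n^{-1}$ with our normalization.

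The main obstacle is bookkeeping: carefully tracking the $K_i^{-1}$-factors (which supply the powers of $q$ producing the $-q^{-2}$ ratio) and, at $i=n$, the extra $[2]_n$ coming from the short-root action on the vector representation, so that the cancellations line up precisely with the coefficients $(-1)^{n+1-k}q^{-2(n+1-k)}$ and $q^{-1}[2]_n^{-1}$. No subtle representation-theoretic input is required beyond the coproduct formula and the explicit actions of Section~\ref{Append: vector spin reps}.
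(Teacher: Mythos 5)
Your overall strategy coincides with the paper's (which offers nothing beyond ``by the direct calculation''): decompose $V(\va_1)\otimes V(\va_n)$ as a $U_q(B_n)$-module, dispose of (a) and (b) by noting $v_1$, $v_\emptyset$ and $m_n^+$ are killed by every $e_i$ with $i\in I_0$, and pin down the coefficients in (c) by requiring $\Delta(e_i)$ to vanish. The structure is right, but two of the relations you display in part (c) are inconsistent with the coefficients you are verifying, so the bookkeeping needs correcting. For $1\le i\le n-1$, the term carrying the $K_i^{-1}$-factor is the one in which $e_i$ acts on the \emph{first} tensor factor, i.e.\ on $v_{i+1}$; since $\langle h_i,\va_n-\epsilon_{i+1}\rangle=1$ and $q_i=q^2$, the coefficient of $v_i\otimes m^{i+1}$ is $q^{-2}a_{i+1}+a_i$, giving $a_{i+1}=-q^{2}a_i$, not $a_{i+1}/a_i=-q^{-2}$ as you wrote. (The ratio $-q^{-2}$ is the one that occurs in the companion Lemma \ref{Lem: h.w.v}, where the tensor factors appear in the opposite order; the stated coefficients $a_k=(-1)^{n+1-k}q^{-2(n+1-k)}$ do have successive ratio $-q^{2}$, so the corrected recurrence matches the lemma while the one you wrote would produce $q^{2k}$ in place of $q^{-2(n+1-k)}$.) Similarly, at $i=n$ one gets the coefficient $a_n+[2]_n\,q^{-1}b$ of $v_n\otimes m_n^+$, where $b$ is the coefficient of $v_0\otimes m_n^+$ (using $e_nv_0=[2]_nv_n$, $K_n^{-1}m_n^+=q_n^{-1}m_n^+=q^{-1}m_n^+$ and $e_nm^n=m_n^+$); hence $b=-q\,[2]_n^{-1}a_n=q^{-1}[2]_n^{-1}$ since $a_n=-q^{-2}$, whereas your formula $b=q^{-1}[2]_n^{-1}a_n$ evaluates to $-q^{-3}[2]_n^{-1}$. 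With these two corrections the verification closes and agrees with the statement.
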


Hence $\Rnorm_{1,n} : V(\va_1)_y \otimes V(\va_n)_x
\to V(\va_n)_x \otimes V(\va_1)_y $  can be expressed by
\begin{align*}
\Rnorm_{1,n}( \tilde u_{\lambda}  )=  u_{\lambda} \quad \text{ and } \quad \Rnorm_{1,n}(\tilde u^i_{\varpi_n}  )=\sum_{j=1}^2  a_{ji}^{\varpi_n}
u_{\varpi_n}^j.
\end{align*}

The following lemmas can be obtained by direct calculations.

\begin{lemma} For the highest weight vectors defined in {\rm Lemma \ref{Lem: h.w.v}}, we have
\begin{enumerate}
\item[{\rm (a)}] $f_0(u_{\varpi_n}^1)=x^{-1}y^{-1}\left( q^{-1}x \right)u_{\lambda}$,
\item[{\rm (b)}] $f_0(u_{\varpi_n}^2)=x^{-1}y^{-1}\left( (-1)^nq^{2n}y \right)u_{\lambda}$,
\item[{\rm (c)}] $e_1\cdots e_{n-1}e_n^{(2)}e_{n-1} \cdots e_2e_1e_0(u_{\varpi_n}^1)=(y)u_{\lambda}$,
\item[{\rm (d)}] $e_1\cdots e_{n-1}e_n^{(2)}e_{n-1} \cdots e_2e_1e_0(u_{\varpi_n}^2)=(q^{-1}x)u_{\lambda}$,
\end{enumerate}
in $V(\va_n)_x \otimes V(\va_1)_y$.
\end{lemma}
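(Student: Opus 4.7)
The four identities are direct computations in the tensor product $V(\va_n)_x\tens V(\va_1)_y$, using the explicit module tables in Section~\ref{Append: vector spin reps}, the coproducts $\Delta(e_i)=e_i\tens K_i^{-1}+1\tens e_i$ and $\Delta(f_i)=f_i\tens 1+K_i\tens f_i$, and the observation that on a spectrally shifted module $V(W)_a$ the generators $e_0$, $f_0$ pick up factors $a$, $a^{-1}$ respectively, while $e_i,f_i$ for $i\in I_0$ act unchanged.

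For (a), the branch $f_0\tens 1$ of $\Delta(f_0)$ applied to $u_{\va_n}^1=[2]_0^{-1}(m_n^+)\tens v_\emptyset$ vanishes because $f_0$ on the spin module requires $m_1=-$, whereas $m_n^+$ has $m_1=+$. The surviving branch $[2]_0^{-1}K_0(m_n^+)\tens y^{-1}[2]_0 v_1$ collapses, via the $K_0$-eigenvalue of $m_n^+$ (a weight-pairing computation in the $D^{(2)}_{n+1}$ Cartan data), to $q^{-1}y^{-1}u_\lambda=x^{-1}y^{-1}(q^{-1}x)u_\lambda$. For (b), the same branch analysis annihilates every summand of $u_{\va_n}^2$ except $(-1)^n q^{2n}(m^1)\tens v_1$, on which $f_0\tens 1$ sends $m^1$ to $x^{-1}m_n^+$ (here $m_1=-$); the $K_0\tens f_0$ branch contributes nothing, since $f_0v_j=0$ for every $j\in\{0,1,\dots,n\}$ occurring in $u_{\va_n}^2$. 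Collecting yields $(-1)^n q^{2n}x^{-1}u_\lambda=x^{-1}y^{-1}((-1)^n q^{2n}y)u_\lambda$.

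For (c) and (d), only the trailing $e_0$ in the word $W=e_1\cdots e_{n-1}e_n^{(2)}e_{n-1}\cdots e_2 e_1 e_0$ is spectrally sensitive, so the scalar $y$ in (c), resp.\ $q^{-1}x$ in (d), is fixed by whether $e_0$ lands on the $V(\va_1)$-leg or on the $V(\va_n)$-leg through $\Delta(e_0)$. After $e_0$ fires, one applies the positive tail $e_1 e_2\cdots e_{n-1}e_n^{(2)}e_{n-1}\cdots e_1$ by iterated coproducts and retains only those branches whose image lies in the one-dimensional $\lambda$-weight space spanned by $u_\lambda=(m_n^+)\tens v_1$; the surviving branch implements the chain $v_{\overline 1}\to v_{\overline 2}\to\cdots\to v_{\overline n}\to v_n\to\cdots\to v_1$ on the $V(\va_1)$-leg and a compatible restoration of $m_n^+$ on the spin leg. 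The main obstacle will be (d), since $u_{\va_n}^2$ is a sum of $n+1$ terms with alternating signs and growing $q^{2k}$ factors; my plan is to verify $n=2,3$ directly to expose the telescoping among the summands after each $e_i$, and then carry out the general calculation by tracking which single summand of $u_{\va_n}^2$ survives under $W$.
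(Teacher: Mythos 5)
Your proposal is correct and follows the same route as the paper, which simply asserts these identities are obtained by direct calculation from the coproduct and the explicit module tables; your branch analysis for (a) and (b) is exactly right, and the scalars match. For (d) the telescoping you anticipate never materializes: every summand of $u_{\varpi_n}^2$ except $[2]_n^{-1}(m_n^+)\otimes v_0$ is killed individually at the $e_n^{(2)}$ step, because its $V(\varpi_1)$-component can only drift among $v_j\to v_{j-1}\to\cdots$ under $e_1,\ldots,e_{n-1}$ and never reaches $v_0$ or $v_{\overline{n}}$, which are the only vectors on which $e_n$ acts nontrivially; the surviving summand then yields $q^{-1}x$ via the middle term $q_n\,e_n\otimes K_n^{-1}e_n$ of $\Delta(e_n^{(2)})$.
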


\begin{lemma} For the highest weight vectors defined in {\rm Lemma \ref{Lem: h.w.v_2}}, we have
\begin{enumerate}
\item[{\rm (a)}] $f_0(\tilde u_{\varpi_n}^1)=x^{-1}y^{-1}(x) \tilde u_{\lambda}$,
\item[{\rm (b)}] $f_0(\tilde u_{\varpi_n}^2)= x^{-1}y^{-1}((-1)^n q^{-2n-2} y) \tilde u_{\lambda}$,
\item[{\rm (c)}] $e_1\cdots e_{n-1}e_n^{(2)}e_{n-1} \cdots e_2e_1e_0(\tilde u_{\varpi_n}^1)=(q^{-1} y) \tilde u_{\lambda}$,
\item[{\rm (d)}] $e_1\cdots e_{n-1}e_n^{(2)}e_{n-1} \cdots e_2e_1e_0(\tilde u_{\varpi_n}^2)=(q^{-1}x) \tilde u_{\lambda}$,
\end{enumerate}
 in $V(\va_1)_y \otimes V(\va_n)_x$.
\end{lemma}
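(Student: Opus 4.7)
The proof is a direct computation, mirroring the preceding lemma but with the tensor factors in the reverse order $V(\va_1)_y \otimes V(\va_n)_x$. The ingredients are the coproduct formulas $\Delta(e_i) = e_i \otimes K_i^{-1} + 1 \otimes e_i$ and $\Delta(f_i) = f_i \otimes 1 + K_i \otimes f_i$, the spectral-parameter rule $e_0(u_z) = z(e_0 u)_z$, $f_0(u_z) = z^{-1}(f_0 u)_z$, $K_i(u_z) = (K_i u)_z$, the explicit $U_q'(D^{(2)}_{n+1})$-actions on $V(\va_1)$ and $V(\va_n)$ tabulated in Section \ref{Append: vector spin reps}, and the weight-vector expressions from Lemma \ref{Lem: h.w.v_2}.

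For (a) and (b) one applies $\Delta(f_0)$ directly. From the tables, $f_0$ acts nontrivially on $V(\va_1)_y$ only on $v_{\overline 1}$ and $v_\emptyset$, and on $V(\va_n)_x$ only on vectors $m$ with $m_1=-$; this kills almost every summand. For (a) only the contribution $[2]_0^{-1}(f_0 v_\emptyset) \otimes m_n^+ = [2]_0^{-1} \cdot y^{-1}[2]_0 \cdot v_1 \otimes m_n^+$ survives, giving $y^{-1}\tilde u_\lambda$. For (b) only the $k=1$ summand of $\tilde u^2_{\varpi_n}$ contributes, through the $K_0 \otimes f_0$ piece, and combining the prefactor $(-1)^{n} q^{-2n}$ with the $K_0$-eigenvalue on $v_1$ and the spectral $x^{-1}$ yields the claimed scalar $x^{-1}y^{-1}((-1)^n q^{-2n-2}y)\tilde u_\lambda$.

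For (c) and (d) the computation is in two steps. First apply $\Delta(e_0)$: since $e_0$ acts nontrivially only on $v_1, v_\emptyset$ and on vectors $m$ with $m_1 = +$, the result collapses to a short expression of weight $\lambda - \epsilon_1$. Next apply $E \seteq e_1\cdots e_{n-1} e_n^{(2)} e_{n-1}\cdots e_2 e_1$, a product of $U_q(B_n)$-raising operators lifting weight by $2\epsilon_1$. The image therefore lies in the $\lambda$-weight space of $V(\va_1)_y \otimes V(\va_n)_x$; because this tensor product decomposes as $V_0(\lambda) \oplus V_0(\va_n)^{\oplus 2}$ as a $U_q(B_n)$-module and $\lambda > \va_n$, the $\lambda$-weight space is one-dimensional and spanned by $\tilde u_\lambda$. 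Hence the output is automatically a scalar multiple of $\tilde u_\lambda$, and the scalar is read off by tracking the $v_1 \otimes m_n^+$ coefficient. A useful subfact verified by unwinding the tables is $E v_{\overline 1} = v_1$ in $V(\va_1)$, via the chain $v_{\overline 1} \to v_{\overline 2} \to \cdots \to v_{\overline n} \to v_0 \to v_n \to \cdots \to v_1$.

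The main technical obstacle is the bookkeeping in (d): $\tilde u^2_{\varpi_n}$ has $n+1$ summands and $\Delta(e_i)$ distributes each $e_i$ of $E$ between the two tensor slots. To manage this I would expand $E \circ \Delta(e_0)\tilde u^2_{\varpi_n}$ by the coproduct and retain only those splittings of $E$ that produce the $v_1 \otimes m_n^+$ component; since $e_i$ annihilates $m_n^+$ for every $i \in I_0$ and $e_i v_j = 0$ for most pairs $(i,j)$, only a handful of splittings survive. Combining their $K_i$-eigenvalue factors with the coefficients $(-1)^{n+1-k}q^{-2(n+1-k)}$ appearing in $\tilde u^2_{\varpi_n}$ and the spectral shifts $x^{\pm 1}, y^{\pm 1}$ then yields the claimed scalar $q^{-1}x$.
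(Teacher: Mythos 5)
Your proposal is correct and takes the same route as the paper, which simply asserts these identities as "direct calculations": you apply the coproduct and the spectral-parameter twist to the explicit vectors of Lemma \ref{Lem: h.w.v_2}, observe that almost all terms die because $e_0,f_0$ act on very few basis vectors and $e_i$ ($i\in I_0$) kills $m_n^+$, and extract the coefficient of $v_1\otimes m_n^+$ using the one-dimensionality of the $\lambda$-weight space. The structural points you isolate ($E v_{\overline 1}=v_1$ via the chain through $v_0$, the $K_i^{-1}$ eigenvalue bookkeeping, and the collapse of the remaining summands) are exactly what make the computation go through, and they check out.
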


From these lemmas, we obtain
\begin{align*}
 \left( \begin{matrix} q^{-1} y^{-1} & (-1)^n q^{2n}x^{-1} \\ y & q^{-1} x \end{matrix} \right) (a_{ij}^{\varpi_n})
=  \left( \begin{matrix} y^{-1} & (-1)^n q^{-2n-2} x^{-1} \\ q^{-1}y & q^{-1} x \end{matrix} \right),
\end{align*}
and hence
\begin{align*}
(a_{ij}^{\varpi_n}) = \dfrac{1}{z^2 + \mqs^{n+1}}
\left( \begin{matrix} qz^2 - (-1)^n q^{2n+1}  & (-1)^n (q^{-2n-1} - q^{2n+1}) z
 \\ (1-q^2)z& z^{2} - (-1)^{n} q^{-2n} \end{matrix} \right),
\end{align*} where $z=xy^{-1}$.

Hence we can conclude that
\begin{align} \label{eq:deno1n D}
  d_{1,n}(z)=d_{n,1}(z)= z^2+\mqs^{n+1}  \quad \text{ for } \g = D^{(2)}_{n+1}.
\end{align}

\subsection{Denominators between fundamental representations} Write
$$ d_{k,l}(z) =  \prod_{\nu}(z- x_\nu).$$

For rational functions $f,g \in \ko(z)$, we write $f \equiv g$ if there exists an element $a \in \ko[z^{\pm 1}]^\times$ such that
$$ f = ag.$$

\begin{lemma} \label{Lem: aij and dij} \cite{AK}
For $k,l \in I_0$, we have
\begin{equation} \label{eq: aij and dij}
\begin{aligned}
& a_{k,l}(z)a_{k,l}((p^*)^{-1}z)\equiv
\dfrac{d_{k,l}(z)}{d_{k,l}(p^*z^{-1})},\\
 & a_{k,l}(z)
 = q^{(\varpi_k , \varpi_l)}
 \prod_{\nu}\dfrac{(p^* x_\nu z; p^{*2})_\infty
 (p^* x_\nu^{-1} z; p^{*2})_\infty}
  {(x_\nu z; p^{*2})_\infty (p^{*2} x_\nu^{-1} z; p^{*2})_\infty},
\end{aligned}
\end{equation}
where $(z;q)_\infty= \prod_{s=0}^\infty (1-q^sz)$.
\end{lemma}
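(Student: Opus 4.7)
The plan is to derive both assertions from two universal properties of the $R$-matrix: the unitarity of $\Runiv$ and the crossing symmetry induced by the duality $V(\va_k)^*=V(\va_{k^*})_{(p^*)^{-1}}$ recalled in \eqref{eq: dual}.

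First, I would unpack the unitarity of the universal $R$-matrix, which in normalized form reads $\Runiv_{V(\va_l),V(\va_k)}(z^{-1})\circ\Runiv_{V(\va_k),V(\va_l)}(z)=\mathrm{id}$ up to permutation, together with the Frenkel--Reshetikhin normalization $\Runiv=a_{k,l}(z)\Rnorm$. Because $\Rnorm_{k,l}$ has the prescribed denominator $d_{k,l}(z)$, this identity translates into a scalar relation $a_{k,l}(z)\,a_{l,k}(z^{-1})\equiv d_{k,l}(z)/d_{l,k}(z^{-1})$ (modulo units in $\ko[z^{\pm 1}]^\times$), and the symmetries $a_{k,l}=a_{l,k}$, $d_{k,l}=d_{l,k}$ from \eqref{eq: symmetric} let me rewrite it as $a_{k,l}(z)\,a_{k,l}(z^{-1})\equiv d_{k,l}(z)/d_{k,l}(z^{-1})$. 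Next, I would combine this with the crossing identity obtained by sandwiching $\Runiv$ with the evaluation and coevaluation morphisms attached to the right dual $V(\va_k)^*=V(\va_{k^*})_{(p^*)^{-1}}$: this identifies $\Runiv_{k,l}(z)$ with $\Runiv_{k^*,l}((p^*)^{-1}z)$ up to a scalar, producing a shift by $(p^*)^{-1}$ in the variable. Composing the unitarity relation with this crossing shift yields precisely the functional equation
\[
a_{k,l}(z)\,a_{k,l}((p^*)^{-1}z)\equiv \frac{d_{k,l}(z)}{d_{k,l}(p^*z^{-1})}.
\]

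Having this $q$-difference equation (shift by $(p^*)^{-1}$, equivalently, by $p^{*2}$ after iteration), I would then verify the explicit infinite-product formula by direct substitution. The basic input is the $q$-Pochhammer shift $(x;p^{*2})_\infty=(1-x)(p^{*2}x;p^{*2})_\infty$. Writing $d_{k,l}(z)=\prod_\nu(z-x_\nu)$, one checks factor by factor that the proposed product
\[
a_{k,l}(z)=q^{(\va_k,\va_l)}\prod_\nu\frac{(p^*x_\nu z;p^{*2})_\infty(p^*x_\nu^{-1}z;p^{*2})_\infty}{(x_\nu z;p^{*2})_\infty(p^{*2}x_\nu^{-1}z;p^{*2})_\infty}
\]
telescopes under the shift $z\mapsto (p^*)^{-1}z$ precisely into $d_{k,l}(z)/d_{k,l}(p^*z^{-1})$, making it the unique meromorphic solution with the correct analytic behavior in $q$-adic sense (convergence in $\cup_{m>0}\C((q^{1/m}))$). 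To pin down the prefactor $q^{(\va_k,\va_l)}$, I would evaluate both sides on the highest-weight tensor $\mathsf{v}_{\va_k}\tens\mathsf{v}_{\va_l}$: by definition of $\Rnorm$ this vector is sent to $\mathsf{v}_{\va_l}\tens\mathsf{v}_{\va_k}$, while the Cartan part of $\Runiv$ acts on it by $q^{(\va_k,\va_l)}$, which forces $a_{k,l}$ evaluated at the relevant reference point to equal $q^{(\va_k,\va_l)}$.

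The main obstacle will be the careful derivation of the crossing symmetry with the correct shift $(p^*)^{-1}$. This relies on the explicit description of the left and right duals in \eqref{eq: dual} and on the compatibility of $\Runiv$ with evaluation and coevaluation maps, which is the technical heart of the argument in \cite{AK} and ultimately rests on the factorization of $\Runiv$ into a Cartan exponential and a quasi-classical piece. Once this crossing identity is in place, the remaining verifications (telescoping of $q$-Pochhammer products, pinning down the $q^{(\va_k,\va_l)}$ prefactor) are routine.
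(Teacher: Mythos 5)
The paper does not actually prove this lemma: it is imported verbatim from \cite{AK} (see also \cite[Appendix A]{KKK13b}, formulas (A.4)--(A.5)), so there is no in-paper argument to compare against. Your overall architecture --- derive a $q$-difference equation for $a_{k,l}$ from unitarity plus the crossing symmetry attached to the duality $V(\va_k)^*\simeq V(\va_{k^*})_{(p^*)^{-1}}$, then solve it by the $q$-Pochhammer telescoping and fix the constant by the Cartan part of $\Runiv$ on $\mathsf{v}_{\va_k}\otimes\mathsf{v}_{\va_l}$ --- is exactly the strategy of \cite[Appendix C]{AK}, and your verification that the product formula telescopes to $d_{k,l}(z)/d_{k,l}(p^*z^{-1})$ under $z\mapsto (p^*)^{-1}z$ is correct. (You should also record explicitly that $k^*=k$ for all four types treated here, since $\g_0$ is of type $B_n$ or $C_n$ and $w_0=-\mathrm{id}$; otherwise the crossing shift lands on $V(\va_{k^*})$ and the functional equation would relate $a_{k,l}$ to $a_{k^*,l}$.)

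There is, however, a genuine inconsistency in your first step. You assert that unitarity $\Runiv_{l,k}(z^{-1})\circ\Runiv_{k,l}(z)=\mathrm{id}$ ``translates into'' $a_{k,l}(z)a_{l,k}(z^{-1})\equiv d_{k,l}(z)/d_{l,k}(z^{-1})$ because $\Rnorm_{k,l}$ has denominator $d_{k,l}$. But the normalized $R$-matrices themselves satisfy $\Rnorm_{l,k}(z^{-1})\circ\Rnorm_{k,l}(z)=\mathrm{id}$: the composition is a scalar for generic $z$ by irreducibility, and it fixes $\mathsf{v}_{\va_k}\otimes\mathsf{v}_{\va_l}$, so the scalar is $1$. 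Substituting $\Runiv=a\,\Rnorm$ into your unitarity relation therefore yields $a_{k,l}(z)a_{l,k}(z^{-1})=1$, with no denominators at all; the ratio $d_{k,l}(z)/d_{l,k}(z^{-1})$ is not a unit of $\ko[z^{\pm1}]$ in general, so your stated relation cannot follow from those premises. The denominators enter only in the crossing step: the partial-transpose-inverse of $\Rnorm_{k,l}(z)$ is \emph{not} normalized to send dominant vector to dominant vector, and comparing it with $\Rnorm_{k^*,l}((p^*)^{-1}z)$ produces precisely the factor $d_{k,l}(z)/d_{k,l}(p^*z^{-1})$ that appears in the final equation. As written, composing your (incorrect) unitarity relation with the crossing identity would either double-count or misplace this factor, so the derivation of the functional equation needs to be redone with the $d$'s sourced from the crossing comparison rather than from unitarity. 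Once that is repaired, the remainder of your argument (telescoping, $q$-adic uniqueness of the power-series solution, and the constant term $q^{(\va_k,\va_l)}$) goes through.
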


Now we list a table for triple ($\delta$,$c$,$p^*$) for each $\g$:
\fontsize{10}{10}\selectfont
\begin{table}[ht]
\renewcommand{\arraystretch}{1.3}
\centering
\begin{tabular}[c]{|c|c|c|c|} \hline
$\g$ & $\delta$  & $c$ & $p^*$ \\ \hline
$A^{(2)}_{2n-1}$ &  $\alpha_0+\alpha_1+2(\alpha_2+\cdots +\alpha_{n-1})+\alpha_{n}$  & $h_0+h_1+2(h_2+\cdots+h_{n})$  & $-(-q)^{2n}$ \\ \hline
$A^{(2)}_{2n}$& $2(\alpha_0+\cdots+\alpha_{n-1})+\alpha_{n}$ & $h_0+2(h_1+\cdots+h_{n})$ & $(-q)^{2n+1}$ \\ \hline
$B^{(1)}_{n}$ & $\alpha_0+\alpha_1+2(\alpha_2+\cdots +\alpha_{n})$ & $h_0+h_1+2(h_2+\cdots+h_{n-1})+h_n$ & $-(-q)^{2n-1}$ \\ \hline
$D^{(2)}_{n+1}$ & $\alpha_0+\alpha_1+\cdots +\alpha_{n}$ & $h_0+2(h_1+\cdots+h_{n-1})+h_n$ & $-\mqs^n$  \\ \hline
\end{tabular}

\caption{ ($\delta$,$c$,$p^*$) for each affine type}
\label{table:p^*}
\end{table}
\fontsize{11}{11}\selectfont

By Lemma \ref{Lem: aij and dij} and \eqref{eq: denominator d11}, we can compute
$a_{1,1}(z)$ for all $\g$ as follows:
\begin{align}
 a_{1,1}(z) &=  \begin{cases}  q\dfrac{\langle 2n+2 \rangle \langle 2n-2 \rangle}{\langle 2n \rangle^2}
                    \dfrac{[4n][0]}{[2][4n-2]} & \text{if $\g=A^{(2)}_{2n-1}$, }  \\[2ex]
                   q\dfrac{[2n+3][2n-1]}{[2n+1]^2}\dfrac{[4n+2][0]}{[2][4n]} & \text{if $\g=A^{(2)}_{2n}$,} \\[2ex]
                   q\dfrac{[2n+1][2n-3]}{[2n-1]^2}\dfrac{[4n-2][0]}{[2][4n-4]} & \text{if $\g=B^{(1)}_{n}$,} \\[2ex]
                   q\dfrac{ \{ n+1 \} \{ n-1 \} }{ \{ n \}^2 }\dfrac{ \{ 2n \} \{ 0 \} }{ \{ 1 \} \{ 2n-1 \} } & \text{if $\g=D^{(2)}_{n+1}$,}
                   \end{cases}   \label{eq: a11}
\end{align}
where, for $a \in \Z$ and $b \in \dfrac{1}{2} \Z$,
$$[a]=((-q)^{a}z ; p^{*2})_\infty, \quad \langle a \rangle=(-(-q)^{a}z
; p^{*2})_\infty \quad \text{ and } \quad \{ b \} =(\mqs^{b}z ; p^{*2})_\infty \times (-\mqs^{b}z ;
p^{*2})_\infty .$$

Note that, for $a \in \Z$ and $b \in \frac{1}{2} \Z$, we have
\begin{equation} \label{eq: division value}
\begin{aligned}
&[a]/[a+4n] \equiv z-(-q)^{-a} \text{ and } \langle a \rangle /\langle a+4n \rangle \equiv z+(-q)^{-a} \quad \ \ \ \text{ if $\g=A^{(2)}_{2n-1}$}, \\
& \qquad \qquad \qquad \qquad [a]/[a+4n+2] \equiv z-(-q)^{-a} \qquad\qquad\qquad \ \text{ if $\g=A^{(2)}_{2n}$}, \\
& \qquad \qquad \qquad \qquad [a]/[a+4n-2] \equiv z-(-q)^{-a} \qquad\qquad\qquad \ \text{ if $\g=B^{(1)}_{n}$}, \\
& \qquad \qquad \qquad \qquad \{ b \} / \{ b+2n  \} \equiv z^2-\mqs^{-2b} \qquad\qquad\qquad \ \text{ if $\g=D^{(2)}_{n+1}$}. \end{aligned}
\end{equation}

\medskip

Following \cite{JMO00}, \cite[(3.12)]{DO96} and \cite[(3.7)]{J86}, we recall the image of $v_k \otimes v_l$ ($k \ne l \in I_0$) under the
normalized $R$-matrix
\begin{align*}
  \Rnorm_{1,1}(z) \colon
   V(\varpi_1) \otimes V(\varpi_1)_z \rightarrow V(\varpi_1)_z \otimes V(\varpi_1),
\end{align*}
which is given by
\begin{align} \label{eq:rmatrix 11}
  \Rnorm_{1,1}(z)(v_k \otimes v_l) =
   \dfrac{(1-(q^2)^t) z^{t\times\delta(k \succ l)}}{z^t-(q^2)^t} (v_k \otimes v_l)
   + \dfrac{q^t(z^t-1)}{z^t-(q^2)^t} (v_l \otimes v_k).
\end{align}
Here $\succ$ is the linear order on the labeling set of the basis of $V(\va_1)$ (see \cite[Section 8]{HK02}).


\begin{proposition} \label{prop: akl}
  For $1 \le k,l \le n-\vartheta$, we have
  \begin{align} \label{eq: akl}
    a_{k,l} (z) \equiv
    \begin{cases}
    \dfrac{[|k-l|][4n-|k-l|]}{[k+l][4n-k-l]}
    \dfrac{\langle 2n+k+l \rangle \langle 2n-k-l \rangle }{\langle 2n+|k-l| \rangle \langle 2n-|k-l| \rangle} & \text{if $\g=A^{(2)}_{2n-1}$, } \\[2ex]
    \dfrac{[|k-l|][4n+2-|k-l|]}{[k+l][4n+2-k-l]}\dfrac{[2n+1+k+l][2n+1-k-l]}{[2n+1+|k-l|][2n+1-|k-l|]}  & \text{if $\g=A^{(2)}_{2n}$}, \\[2ex]
    \dfrac{[|k-l|][2n+k+l-1][2n-k-l-1][2n-|k-l|-1]}{[k+l][2n+k-l-1][2n-k+l-1][2n-k-l-2]} & \text{if $\g=B^{(1)}_{n}$},  \\[2ex]
    \dfrac{ \{|\frac{k-l}{2}| \}\{2n-|\frac{k-l}{2}|\}\{n+\frac{k+l}{2}\}\{n-\frac{k+l}{2}\}}
{\{\frac{k+l}{2}\}\{2n-\frac{k+l}{2}\}\{n+|\frac{k-l}{2}|\}\{n-|\frac{k-l}{2}|\}} & \text{if $\g=D^{(2)}_{n+1}$}.  \\[2ex]
    \end{cases}
  \end{align}
\end{proposition}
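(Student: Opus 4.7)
The plan is to prove the proposition by induction on $l$ (for $k \le l$, which is the only case we need to treat by the symmetry \eqref{eq: symmetric}), using the commutative diagram \eqref{Diagram: Runiv} to set up a recursion for $a_{k,l}(z)$. The base case is $a_{1,1}(z)$ in \eqref{eq: a11}; the values $a_{k,1}(z)$ for $k\ge 2$ follow from $a_{k,1}(z)=a_{1,k}(z)$ and the same recursion applied to $a_{1,k}$. For the inductive step, I trace the vector $v_{[1,\ldots,k]} \otimes v_{[1,\ldots,l-1]} \otimes v_l$ through the diagram as in \eqref{Diagram: Runiv2}. Applying $p_{l-1,1} \otimes \mathrm{id}$ to the bottom-left entry, and using that $p_{l-1,1}(v_{[1,\ldots,l-1]} \otimes v_l) = v_{[1,\ldots,l]}$, I obtain the identity
\begin{equation*}
a_{k,l}(z) = \phi_{k,l}(z)\, a_{k,l-1}\bigl((-q^t)^{-1/t}z\bigr)\, a_{k,1}\bigl((-q^t)^{(l-1)/t}z\bigr),
\end{equation*}
where $\phi_{k,l}(z)$ is the coefficient of $v_l \otimes v_{[1,\ldots,k]}$ in $w = \Rnorm_{k,1}\bigl((-q^t)^{(l-1)/t}z\bigr)(v_{[1,\ldots,k]} \otimes v_l)$.

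The substantive step is the explicit computation of $\phi_{k,l}(z)$. To this end I would iterate the Dorey-type injection from Theorem \ref{thm: pij iota ij 1} to embed $V(\va_k)$ into $V(\va_1)_{x_1} \otimes \cdots \otimes V(\va_1)_{x_k}$, sending $v_{[1,\ldots,k]}$ to $v_1 \otimes \cdots \otimes v_k$, with spectral parameters $x_i$ determined by successive shifts of $(-q^t)^{\pm 1/t}$. Then \eqref{eq: Runiv property} expresses $\Rnorm_{k,1}$ as a composition of $k$ copies of $\Rnorm_{1,1}$, each acting on a pair $v_i \otimes v_l$ with $1 \le i \le k \le l$. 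Since the linear order on the basis of $V(\va_1)$ satisfies $v_i \prec v_l$ in this range, the $\delta(v_i \succ v_l)$ contribution in \eqref{eq:rmatrix 11} vanishes, and iterating produces $\phi_{k,l}(z)$ as an explicit product of the scalars $q^t(z_i^t-1)/(z_i^t - (q^2)^t)$ at the shifted arguments.

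It then remains to verify that the claimed formula \eqref{eq: akl} satisfies this recursion. Under the shift $z \mapsto (-q^t)^{-1/t}z$, each factor $[a]$, $\langle a\rangle$, $\{b\}$ transforms to $[a-1]$, $\langle a-1\rangle$, $\{b-\tfrac{1}{2}\}$ respectively, so the ratio $a_{k,l}(z)/\bigl(a_{k,l-1}((-q^t)^{-1/t}z)\, a_{k,1}((-q^t)^{(l-1)/t}z)\bigr)$ predicted by \eqref{eq: akl} telescopes into a short ratio of these symbols, which should match $\phi_{k,l}(z)$. The main obstacle is precisely this type-by-type bookkeeping: the vector representation $V(\va_1)$ and the coefficients arising from the action of $e_0$, $f_0$ recorded in Section \ref{Append: vector spin reps} differ noticeably across $A^{(2)}_{2n-1}$, $A^{(2)}_{2n}$, $B^{(1)}_n$, $D^{(2)}_{n+1}$, and the case $D^{(2)}_{n+1}$ (where $t=2$) involves half-integer shifts that must be tracked carefully. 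The boundary case $k=l$, in which one of the vectors in the iteration coincides with $v_l$, also requires a mild separate check.
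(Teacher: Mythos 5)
Your overall skeleton (the commutative diagram \eqref{Diagram: Runiv}, the recursion $a_{k,l}(z)=\phi_{k,l}(z)\,a_{k,l-1}((-q^t)^{-1/t}z)\,a_{k,1}((-q^t)^{(l-1)/t}z)$, the base case \eqref{eq: a11}) is exactly the paper's, but you have placed the induction in the wrong regime, and this turns the one step you leave as a sketch into the hardest part of the argument. The paper uses the symmetry \eqref{eq: symmetric} to reduce to $2\le l\le k$, and in that range the coefficient you call $\phi_{k,l}(z)$ is identically $1$: since $f_i v_{[1,\dots,k]}=0$ for $i<k$ (as $\langle h_i,\va_k\rangle=0$ there), one has $v_{[1,\dots,k]}\otimes v_l=f_{l-1}\cdots f_1(v_{[1,\dots,k]}\otimes v_1)$ and $v_l\otimes v_{[1,\dots,k]}=f_{l-1}\cdots f_1(v_1\otimes v_{[1,\dots,k]})$ whenever $l\le k$, so the intertwiner property plus the normalization $\Rnorm_{k,1}(v_{[1,\dots,k]}\otimes v_1)=v_1\otimes v_{[1,\dots,k]}$ forces $\Rnorm_{k,1}(z)(v_{[1,\dots,k]}\otimes v_l)=v_l\otimes v_{[1,\dots,k]}$ on the nose. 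The explicit $\Rnorm_{1,1}$ formula \eqref{eq:rmatrix 11} is then needed only for the row $k=1$, where a single off-diagonal coefficient $q((-q)^{l-1}z-1)/((-q)^{l-1}z-q^2)$ appears and the term $v_{[1,\dots,l-1]}\otimes v_1\otimes v_l$ is killed by $p_{l-1,1}$. By instead inducting over $k\le l$ you forfeit this vanishing and must genuinely compute $\phi_{k,l}(z)$ for all $2\le k\le l$, which is where your proposal stops being a proof.

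The method you propose for that computation also has a concrete flaw: the iterated Dorey embedding does \emph{not} send $v_{[1,\dots,k]}$ to the single tensor $v_1\otimes\cdots\otimes v_k$. By \eqref{map: injection g_0} and Proposition \ref{prop: c}, the image of the extremal vector of weight $\va_k$ in $V(\va_1)_{x_1}\otimes\cdots\otimes V(\va_1)_{x_k}$ is a sum over all splittings of $\{\epsilon_1,\dots,\epsilon_k\}$, i.e.\ over many permuted tensors with coefficients $(-q_1)^{c}$. One can still argue that the ``full swap'' coefficient extracted from the composition of $\Rnorm_{1,1}$'s is independent of the permutation, but you must also renormalize the composed $\Rnorm_{1,1}$'s against their value on the dominant vector $v_{[1,\dots,k]}\otimes v_1$ (the composition of normalized $R$-matrices is only proportional to $\Rnorm_{k,1}$), handle the diagonal case $v_k\otimes v_k$ when $k=l$ where \eqref{eq:rmatrix 11} does not apply, and then actually write down and telescope the resulting product against \eqref{eq: akl} in all four types. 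None of this is carried out, and it is all avoidable: redo the induction with $l\le k$ and the entire step collapses to $\phi_{k,l}=1$.
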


\begin{proof}
We prove only for the case when $\g$ is of type $A^{(2)}_{2n-1}$. For the other $\g$, one can apply the same argument
to prove our assertion. We first consider when $k=1$.

By \eqref{eq: a11}, our assertion for $k=l=1$ holds. Applying the commutative diagram \eqref{Diagram: Runiv} for $k=1$,
we have
\begin{align} \label{eq: mapsto}
a_{1,l-1}(-q^{-1}z) a_{1,1}((-q)^{l-1}z)v_{[1,\ldots, l-1]} \otimes w \longmapsto  a_{1,l}(z) v_{[1,\ldots, l-1,l]} \otimes v_1,
\end{align}
where $$w=\Rnorm_{1,1}((-q)^{l-1}z)( v_1 \otimes v_{l})
 =  \dfrac{q((-q)^{l-1}z -1 )}{(-q)^{l-1}z -q^2}v_{l} \otimes v_1 +
\dfrac{(1 -q^2 )}{(-q)^{l-1}z -q^2} v_{1} \otimes v_l.$$
Since $v_{[1,\ldots,l-1]} \otimes v_1$ vanishes under the map $p_{l-1,1}$, \eqref{eq: mapsto} indicates that
\begin{align*}
a_{1,l}(z) &= a_{1,l-1}(-q^{-1}z) \ a_{1,1}((-q)^{l-1}z)
 \dfrac{q((-q)^{l-1}z -1 )}{(-q)^{l-1}z -q^2} \\
&\equiv a_{1,l-1}(-q^{-1}z) \ a_{1,1}((-q)^{l-1}z)
\dfrac{[l-1]}{[4n+l-1]}\dfrac{[4n+l-3]}{[l-3]}.
\end{align*}
Hence our assertion for $k=1$ follows from an induction on $l$:
\begin{align} \label{eq:a1k A}
  a_{1,l}(z)= a_{l,1}(z) \equiv\dfrac{[l-1][4n-l+1]}{[l+1][4n-l-1]}
  \dfrac{\langle 2n-l-1 \rangle \langle 2n+l+1 \rangle}{\langle 2n+l-1 \rangle \langle 2n-l+1 \rangle }.
\end{align}

By \eqref{eq: symmetric}, we now assume $2 \le l \le k \le n$. By the direct calculation, one can show that
\begin{align*}
f_{l-1} f_{l-2} \cdots f_1 (v_{[1,\dots,k]} \otimes v_1) =   v_{[1,\dots,k]} \otimes v_l \quad\text{and}\quad
f_{l-1} f_{l-2} \cdots f_1 (v_1 \otimes v_{[1,\dots,k]} ) =  v_l \otimes  v_{[1,\dots,k]}.
\end{align*}
Since $\Rnorm_{k,1}$ is a $\uqpg$-homomorphism and sends $v_{[1,\ldots,k]} \otimes v_1$ to $v_1 \otimes v_{[1,\ldots,k]}$, we have
$$\Rnorm_{k,1}(z)(v_{[1,\ldots,k]} \otimes v_l) =  v_l \otimes  v_{[1,\dots,k]}.$$
Thus, the image in \eqref{Diagram: Runiv2},
\begin{align*}
a_{k,l-1}(-q^{-1}z) a_{k,1}((-q)^{l-1}z)v_{[1,\ldots, l-1]} \otimes w \longmapsto a_{k,l}(z) v_{[1,\ldots, l-1,l]} \otimes v_{[1,\ldots,k]}
\end{align*}
for $w=\Rnorm_{k,1}((-q)^{l-1}z)( v_{[1,\dots,k]} \otimes v_l ) =  v_l \otimes  v_{[1,\dots,k]}$, implies that
\begin{align} \label{eq:recursive akl A}
a_{k,l}(z) = a_{k,l-1}(-q^{-1}z) \ a_{k,1}((-q)^{l-1}z) &\quad (2
\leq l \le k \le n).
\end{align}
Hence one can obtain our assertion by applying an induction on $l$.
\end{proof}

\medskip


\begin{theorem} \label{Thm: dkl}
For $1\leq k,l \le n-\vartheta$, we have
\begin{align}\label{eq: dkl}
 d_{k,l}(z) =  \prod_{s=1}^{\min(k,l)} (z^t-(-q^t)^{|k-l|+2s})(z^t-(p^*)^t(-q^t)^{2s-k-l}).
\end{align}
\end{theorem}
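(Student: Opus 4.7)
The plan is to apply the general framework of Section~4.1 by induction on the pair $(k,l)$. The symmetry \eqref{eq: symmetric} lets me assume $k \le l$, and the base case $d_{1,1}(z)$ is already given by \eqref{eq: denominator d11}. The induction machinery rests on two ingredients: Lemma~\ref{lem:dvw avw} applied to the pair of surjections \eqref{eq:first surj} and \eqref{eq:second surj}, and Proposition~\ref{prop: akl} for the $a$-factors together with the reductions \eqref{eq: division value}.

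First I would compute $d_{1,l}(z)$ for $2 \le l \le n - \vartheta$ by induction on $l$. Apply Lemma~\ref{lem:dvw avw} with $W = V(\varpi_1)$ to the surjection $p_{l-1,1}$ of \eqref{p(k-1,1)}. This yields a Laurent polynomial whose numerator involves the known $d_{1,l-1}((-q^t)^{-1/t}z)$, $d_{1,1}((-q^t)^{(l-1)/t}z)$ and $a_{1,l}(z)$, and whose denominator contains the unknown $d_{1,l}(z)$ together with known $a$-factors. The $a$-ratio simplifies via Proposition~\ref{prop: akl} and \eqref{eq: division value} to an explicit polynomial ratio, giving an upper bound of the form $d_{1,l}(z) \mid P_l(z)$. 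Pairing this with the reciprocity \eqref{eq: aij and dij}, which fixes the quotient $d_{1,l}(z)/d_{1,l}(p^* z^{-1})$ in terms of $a_{1,l}$, determines $d_{1,l}(z)$ uniquely as the formula \eqref{eq: dkl} with $k = 1$.

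Second, for $2 \le k \le l$ I would induct on $k$ with $l$ fixed. Applying Lemma~\ref{lem:dvw avw} with $W = V(\varpi_l)$ to \eqref{eq:first surj} gives
\begin{equation*}
F_1(z) = \frac{d_{l,k-1}((-q^t)^{-1/t}z)\, d_{l,1}((-q^t)^{(k-1)/t}z)\, a_{l,k}(z)}{d_{l,k}(z)\, a_{l,k-1}((-q^t)^{-1/t}z)\, a_{l,1}((-q^t)^{(k-1)/t}z)} \in \ko[z^{\pm 1}],
\end{equation*}
and applying it to \eqref{eq:second surj} gives an analogous element $F_2(z)$ in which $d_{l,k}$ appears in the numerator at a shifted argument while $d_{l,k-1}$ appears in the denominator, with an additional $p^*$-shift on the $d_{l,1}$-factor. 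By the inductive hypothesis every $d$-factor other than $d_{l,k}(z)$ is explicit, and by Proposition~\ref{prop: akl} together with \eqref{eq: division value} every $a$-ratio reduces to a concrete polynomial quotient. Then $F_1 \in \ko[z^{\pm 1}]$ supplies an upper bound $d_{l,k}(z) \mid P(z)$ with $P(z)$ an explicit product of linear factors, while $F_2 \in \ko[z^{\pm 1}]$ supplies the matching reverse divisibility, and monicness identifies $d_{l,k}(z)$ with the claimed product \eqref{eq: dkl}.

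The main obstacle is the cancellation bookkeeping inside the ratio $a_{l,k}(z)/\bigl(a_{l,k-1}((-q^t)^{-1/t}z)\, a_{l,1}((-q^t)^{(k-1)/t}z)\bigr)$ and its $p^*$-shifted counterpart: the four Pochhammer-type factors in each $a$ of Proposition~\ref{prop: akl} largely telescope after the argument shifts, and only a small number of boundary factors survive. Converting these via \eqref{eq: division value} must yield exactly the pair of new roots $(z^t - (-q^t)^{|k-l|+2k})$ and $(z^t - (p^*)^t(-q^t)^{k-l})$ corresponding to the $s = k$ term of \eqref{eq: dkl}. This telescoping has to be verified separately for the four affine types $A^{(2)}_{2n-1}$, $A^{(2)}_{2n}$, $B^{(1)}_{n}$ and $D^{(2)}_{n+1}$, although the underlying mechanism is parallel; the case $\g = D^{(2)}_{n+1}$ is the most delicate because of the square-root parameters hidden in the symbol $\{b\}$ and the appearance of $t = 2$.
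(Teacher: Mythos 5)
Your overall architecture coincides with the paper's: induction built on the two surjections \eqref{eq:first surj}--\eqref{eq:second surj}, the divisibility statements of Lemma \ref{lem:dvw avw}, the explicit $a$-factors of Proposition \ref{prop: akl} reduced via \eqref{eq: division value}, and the reciprocity in \eqref{eq: aij and dij}. The gap is in the claim that $F_2$ ``supplies the matching reverse divisibility'' and that reciprocity then pins $d_{k,l}$ down uniquely. Write $\Dkl(z)$ for the right-hand side of \eqref{eq: dkl} and $\phi_{k,l}=\Dkl/d_{k,l}$. In type $D^{(2)}_{n+1}$ the combination of $F_1$, $F_2$ and the induction hypothesis only yields that $\phi_{k,l}(z)$ divides $z^2-(-q^2)^{2n-k-l}$ when $k+l\ge n$ (this is \eqref{eq:psi D} in the paper); one quadratic factor remains undecided. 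Reciprocity \eqref{eq: Dd} forces $\phi_{k,l}(p^*z^{-1})\equiv\phi_{k,l}(z)$, which does eliminate this factor when $k+l>n$ (its roots $\pm(-q^2)^{(2n-k-l)/2}$ are sent to $\mp(-q^2)^{(k+l)/2}$, which are not roots of it), but when $k+l=n$ the undecided factor is $z^2-(-q^2)^{n}=z^2+p^*$, which is \emph{invariant} under $z\mapsto p^*z^{-1}$, so reciprocity gives no information at all. The same obstruction already appears in your first step, for $d_{1,n-1}(z)$.

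The paper closes this hole with an input your proposal never invokes: a lower bound coming from reducibility of specific tensor products, witnessed by the Dorey-type morphisms of Section \ref{sec: sujective homo}. Concretely, for $k+l=n$ Lemma \ref{lemma: k,l to n,n} (built from Theorem \ref{thm: nnk}) produces a surjection of $V(\varpi_k)_{\eta(-q^2)^{-l/2}}\otimes V(\varpi_l)_{\eta'(-q^2)^{k/2}}$ onto the proper irreducible quotient $V(\varpi_n)_{-1}\otimes V(\varpi_n)$, which forces $d_{k,l}(z)$ to genuinely vanish at $z=\pm(-q^2)^{(k+l)/2}$; since $\Dkl$ carries that zero with multiplicity one, $\phi_{k,l}=1$ follows. (For $1\le k+l\le n-\vartheta$ the analogous role is played by the maps of Theorem \ref{thm: pij iota ij 1}.) Without some such mechanism exhibiting actual zeros of $d_{k,l}$, your divisibility-plus-reciprocity scheme cannot exclude that $d_{k,l}$ is smaller than claimed by exactly the self-reciprocal factor. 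A secondary omission: to trim the upper bound coming from $F_1$ you also need that zeros of denominators lie in $\C[[q]]$ (\cite[Theorem 2.2.1 (i)]{KKK13b}) in order to discard candidate roots with negative $q$-adic valuation such as $\pm(-q^2)^{(1-k)/2}$; this is not mere bookkeeping, since those factors survive the telescoping.
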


\begin{proof}
For $1 \le k, l \le n-\vartheta$, set
\begin{align} \label{eq: D(k,l)}
  \Dkl(z) = \prod_{s=1}^{\min(k,l)} (z^t-(-q^t)^{|k-l|+2s})(z^t-(p^*)^t\mqs^{2s-k-l}).
\end{align}

Then we can observe that $\Dkl(z)$ behaves similar to $d_{k,l}(z)$.
Namely, (cf. \eqref{eq: symmetric}, \eqref{eq: denominator d11} and \eqref{eq: aij and dij})
\begin{align}
& D_{1,1}(z)=d_{1,1}(z), \qquad  \Dkl(z)=D_{l,k}(z),  \label{eq: sym D and start}  \\
& \dfrac{\Dkl(z)}{\Dkl(p^*z^{-1})} \equiv a_{k,l}(z)a_{k,l}((p^*)^{-1}z) \equiv \dfrac{d_{k,l}(z)}{d_{k,l}(p^*z^{-1})}. \label{eq: Dd}
\end{align}

By calculations, one can check that
\begin{align} \label{eq:d_k,l d_k,l-1 with k1}
  \Dkl(z) = D_{k,l-1}((-q^t)^{-1/t}z)D_{k,1}((-q^t)^{l-1/2}z) \quad \text{ for } 2 \le k \le n-\vartheta,
\end{align}
which is similar to \eqref{eq:recursive akl A}, also.

\medskip

Now we give a proof for $\g=D^{(2)}_{n+1}$, since
this case is most complicated. For the other $\g$, one can apply the similar argument to prove.

\medskip

We shall show that $\Dkl(z)=d_{k,l}(z)$ indeed. Our assertion for $k=l=1$ is presented in \eqref{eq:deno1n D}.
Assume that $1 \le k \le n-1$ and $2 \le l \le n-1$.

From the a surjective homomorphism in Theorem \ref{thm: pij iota ij 1}
\begin{align*}
  p_{l-1,1} \colon V(\varpi_{l-1})_{\mqs^{-\frac{1}{2}}} \otimes V(\varpi_{1})_{\mqs^{\frac{l-1}{2}}}
  \twoheadrightarrow V(\varpi_{l}),
  \end{align*}
the first formula in Lemma \ref{lem:dvw avw} with setting $W=V(\va_k)$ yields an element in $\ko[z^{\pm 1}]$ as follows:
\begin{align}\label{eq:dk1}
\dfrac{d_{k,l-1}(\mqs^{-\frac{1}{2}}z)d_{k,1}(\mqs^{\frac{l-1}{2}}z)}{d_{k,l}(z)} \dfrac{a_{k,l}(z)}
{a_{k,l-1}(\mqs^{-\frac{1}{2}}z)a_{k,1}(\mqs^{\frac{l-1}{2}}z)} \in \ko[z^{\pm 1}],
\end{align}
for $1 \le k \le n-1$ and  $2 \le l \le n-1$. In particular, if $2 \le l \le k \le n-1$,
\begin{align} \label{eq:dkl induction argu}  \dfrac{d_{k,l-1}(\mqs^{-\frac{1}{2}}z)d_{k,1}(\mqs^{\frac{l-1}{2}}z)}{d_{k,l}(z) } \in \ko[z^{\pm 1}],
\end{align}
since (cf. \eqref{eq:recursive akl A})
$$\dfrac{a_{k,l}(z)}{a_{k,l-1}(\mqs^{-\frac{1}{2}}z)a_{k,1}(\mqs^{\frac{l-1}{2}}z)} \in \ko[z^{\pm 1}]^\times$$
by the computation using \eqref{eq: akl}.

Using \eqref{eq: akl} once again, for $k=1 < l$, one can compute that
$$\dfrac{a_{1,l}(z)}{a_{1,l-1}(\mqs^{-\frac{1}{2}}z) a_{1,1}(\mqs^{\frac{l-1}{2}}z)} \equiv
\dfrac{(z^2-\mqs^{1-l})}{(z^2-\mqs^{3-l})} \quad \text{for $2 \le l \le n-1$.}$$

Set $k=1$ and then replace $l$ with $k$ in \eqref{eq:dk1}. Then \eqref{eq:dk1} becomes
\begin{equation}\label{eq:d1l divides blabla D}
\begin{aligned}
 &\dfrac{d_{1,k-1}(\mqs^{-\frac{1}{2}}z)D_{1,1}(\mqs^{\frac{k-1}{2}}z)}{d_{1,k}(z)}
  \dfrac{(z^2-\mqs^{1-k})}{(z^2-\mqs^{3-k})} \\[1ex]
 &\equiv \dfrac{d_{1,k-1}(\mqs^{-\frac{1}{2}}z)(z^2-\mqs^{2n-k+1})
  (z^2-\mqs^{1-k})}{d_{1,k}(z)}
    \in \ko[z^{\pm 1}]
    \quad \text{for $2 \le k\leq n-1$,}
\end{aligned}
\end{equation}
since $D_{1,1}(z)=d_{1,1}(z)$.

On the other hand, from the surjective homomorphism
\begin{align*}
V(\varpi_k)_{\mqs^{-\frac{1}{2}}} \otimes
V(\varpi_1)_{-\mqs^{\frac{2n-k}{2}}} \rightarrow V(\varpi_{k-1}),
  \end{align*}
the second formula in Lemma \ref{lem:dvw avw} with setting $W=V(\va_l)$ yields an element in $\ko[z^{\pm 1}]$ as follows:
\begin{align} \label{eq: D 2}
\dfrac{d_{1,l}(-\mqs^{\frac{k-2n}{2}}z)d_{k,l}(\mqs^{\frac{1}{2}}z)}{d_{k-1,l}(z)}
\dfrac{a_{k-1,l}(z)}{ a_{k,l}(\mqs^{\frac{1}{2}}z)
a_{1,l}(-\mqs^{\frac{k-2n}{2}}z)} \in \ko[z^{\pm 1}].
\end{align}

By computation using \eqref{eq: akl}, we have
\begin{align*}
& \dfrac{a_{k-1,l}(z)}
 {a_{k,l}(\mqs^{\frac{1}{2}}z) a_{1,l}(-\mqs^{\frac{k-2n}{2}}z)}  \equiv \begin{cases}
  \dfrac{z^2-\mqs^{2n-k-l-1}}{z^2-\mqs^{2n-k-l+1}}
  & \text{if}  \ 1 \le l < k \le n-1, \\[2ex]
  \dfrac{(z^2-\mqs^{2n-k-l-1})}{(z^2-\mqs^{2n-k-l+1})}
 \dfrac{( z^2-\mqs^{-1} )}{( z^2-\mqs^{1} )}
  & \text{if} \ 2 \le l = k \le n-1.
 \end{cases}
\end{align*}
Thus the element \eqref{eq: D 2} in $\ko[z^{\pm 1}]$ can be written as follows:
\begin{align}\label{eq:d relarion l-k}
&\dfrac{d_{1,l}(-\mqs^{\frac{k-2n}{2}}z)d_{k,l}(\mqs^{\frac{1}{2}}z)
}{d_{k-1,l}(z)}
  \dfrac{(z^2-\mqs^{2n-k-l-1})}{(z^2-\mqs^{2n-k-l+1})}
 \in \ko[z^{\pm 1}] && \text{if $1 \le l  < k \le n-1$,}
  \end{align}
and
\begin{equation}\label{eq:d relarion 1-1}
\begin{aligned}
&\dfrac{d_{1,l}(-\mqs^{\frac{k-2n}{2}}z)d_{k,l}(\mqs^{\frac{1}{2}}z)
}{d_{k-1,l}(z)} \dfrac{(z^2-\mqs^{2n-k-l-1})}{(z^2-\mqs^{2n-k-l+1})}
 \dfrac{( z^2-\mqs^{-1} )}{( z^2-\mqs^{1} )}
  \in \ko[z^{\pm 1}]
  \end{aligned}
  \end{equation}
if $2 \le l = k \le n-1$.

Setting $l=1$ in \eqref{eq:d relarion l-k}, we obtain
\begin{align} \label{eq:dk1 l=1}
 \dfrac{ D_{1,1}(-\mqs^{\frac{k-2n-1}{2}}z)  d_{k,1}(z) }{d_{k-1,1}(\mqs^{-\frac{1}{2}}z)}
 \dfrac{(z^2-\mqs^{2n-k-1})}{(z^2-\mqs^{2n-k+1})}
\quad \in \ko[z^{\pm 1}] &\quad \text{for $2 \le k \le n-1$.}
\end{align}

Now we claim that
$$d_{1,k}(z)=D_{1,k}(z)=(z^2-\mqs^{k+1})(z^2-\mqs^{2n-k+1}) \quad \text{ for $2 \le k \le n-1$.}$$
With \eqref{eq: sym D and start}, we can start an induction on $k$. Thus \eqref{eq:d1l divides blabla D} can be written in
the following form:
\begin{equation}\label{eq:d1k}
\begin{aligned}
 &\dfrac{D_{1,k-1}(\mqs^{-\frac{1}{2}}z)(z^2-\mqs^{2n+1-k})(z^2-\mqs^{1-k})}{d_{1,k}(z)} \qquad \text{for  $2 \le k \le n-1$}\\
 \equiv &\dfrac{(z^2-\mqs^{k+1})(z^2-\mqs^{2n-k+3})
 (z^2-\mqs^{2n+1-k})(z^2-\mqs^{1-k})}{d_{1,k}(z)}
     \in \ko[z^{\pm 1}].
\end{aligned}
\end{equation}

Now we claim that
\begin{equation} \label{claim D}
\text{$z=\pm\mqs^{\frac{1-k}{2}},\pm\mqs^{\frac{2n-k+3}{2}}$ are not zero of $d_{1,k}(z)$.}
\end{equation}

If \eqref{claim D} is true, we have
\begin{align} \label{eq:dl1 div D}
\dfrac{D_{1,k}(z)}{d_{1,k}(z)}=\dfrac{(z^2-\mqs^{k+1})(z^2-\mqs^{2n+1-k})}{d_{1,k}(z)}
    \in \ko[z^{\pm 1}]\quad (2 \le k \le n-1).
\end{align}

Since $\dfrac{1-k}{2} \le 0$, $\pm\mqs^{\frac{1-k}{2}}\not\in \C[[q]]$. Then \cite[Theorem 2.2.1 (i)]{KKK13b} tells that
$\pm\mqs^{\frac{1-k}{2}}$ can not be a zero of $d_{1,k}(z)$.

If  $z=\pm\mqs^{\frac{k-3}{2}}$ is a zero of $d_{1,k}(z)$, we have a contradiction to the fact that the element \eqref{eq:dl1 div D}
is in $\ko[z^{\pm 1}]$. Thus we know that $z=\pm\mqs^{\frac{k-3}{2}}$ is not a zero of
$d_{1,k}(z)$. Since $\dfrac{\Dkl(z)}{d_{k,l}(z)}  \equiv \dfrac{\Dkl(p^*z^{-1})}{d_{k,l}(p^*z^{-1})}$ by \eqref{eq: Dd},
one can check that
\begin{itemize}
\item $z=\pm\mqs^{\frac{k-3}{2}}$ is not a pole of $D_{1,k}(z)/d_{1,k}(z)$,
\item $z=\pm\mqs^{\frac{2n-k+3}{2}}$ is not a pole of $D_{1,k}(-\mqs^{n}z^{-1})/
d_{1,k}(-\mqs^{n}z^{-1})$.
\end{itemize}
Thus $\pm\mqs^{\frac{2n-k+3}{2}}$ can not be zero of $d_{1,k}(z)$ and hence the claim in \eqref{claim D} holds.

By an induction on $k$ in
\eqref{eq:dk1 l=1},
we also obtain
\begin{align*}
  &\dfrac{d_{1,k}(z)}{(z^2-\mqs^{k+1})(z^2-\mqs^{2n+1-k})}
  \quad \in \ko[z^{\pm 1}]  &&\quad \text{if} \ k \neq n-1, \\
&\dfrac{d_{1,k}(z)}{(z^2-\mqs^{2n+1-k})}
  \quad \in \ko[z^{\pm 1}]  &&\quad \text{if} \ k = n-1.
\end{align*}
By Theorem \ref{thm: pij iota ij 1} and Lemma
\ref{lemma: k,l to n,n}, $d_{1,k}(z)$ has zeros at $\pm\mqs^{\frac{k+1}{2}}$
for $1 \le k \le n-1$.
Thus we have \begin{align} \label{eq:dl1 rev div D}
\dfrac{d_{1,k}(z)}{D_{1,k}(z)}=\dfrac{d_{1,k}(z)}{(z^2-\mqs^{k+1})(z^2-\mqs^{2n+1-k})}
\in \ko[z^{\pm 1}]  \  (2\leq k \le n-1). \end{align}
By considering \eqref{eq:dl1 div D} and \eqref{eq:dl1 rev div D} together, our assertion for $k=1$ holds:
\begin{align*}
d_{1,k}(z)=
(z^2-\mqs^{k+1})(z^2-\mqs^{2n+1-k})=D_{1,k}(z)
&\quad  (2 \le k \le n-1).
\end{align*}

Now we apply an induction on $k+l$.
Applying the induction at \eqref{eq:dkl induction argu} with \eqref{eq:d_k,l d_k,l-1 with k1},
we have \begin{align*}
  \dfrac{d_{k,l-1}(\mqs^{-\frac{1}{2}}z)d_{k,1}(\mqs^{\frac{l-1}{2}}z)}{d_{k,l}(z) }
  =\dfrac{D_{k,l-1}(\mqs^{-\frac{1}{2}}z)D_{k,1}(\mqs^{\frac{l-1}{2}}z) }{d_{k,l}(z) }
  =\dfrac{\Dkl(z)}{d_{k,l}(z)} \
   \in \ko[z^{\pm 1}]
\end{align*}
for $2 \le l \le  k \le n-1$.

\medskip

Let $\phi_{k,l}(z)$ be the elements in $\ko[z^{\pm 1}]$ satisfying $\Dkl(z) = d_{k,l}(z)
\phi_{k,l}(z)$. We claim that
$$ \phi_{k,l}(z)=1 \quad \text{ for } \quad 2 \le l
\leq k \le n-1.$$

Note that
\begin{align*}
  &\dfrac{D_{1,l}(-\mqs^{\frac{k-2n}{2}}z)\Dkl(\mqs^{\frac{1}{2}}z) }{D_{k-1,l}(z)}
  \dfrac{ (z^2-\mqs^{2n-k-l-1} ) }{( z^2-\mqs^{2n-k-l+1}) }  \\
  &=\begin{cases}
(z^2-\mqs^{4n-k-l+1})(z^2-\mqs^{2n-k-l-1}) & \text{if $l < k$,}\\[1.5ex]
 (z^2-\mqs^{4n-k-l+1})(z^2-\mqs^{2n-k-l-1})(z^2-\mqs^{2n+1})(z^2-\mqs)
 & \text{if $l=k$.}
   \end{cases}
\end{align*}
By \eqref{eq:d relarion l-k}, \eqref{eq:d relarion 1-1} and an induction on $k+l$, the above elements are written in the following form:
\begin{align*}
&\dfrac{(z^2-\mqs^{4n-k-l+1})(z^2-\mqs^{2n-k-l-1})}{\phi_{k,l}(\mqs^{\frac{1}{2}}z)} \ \in  \ko[z^{\pm 1}] \qquad \text{if} \ l < k,\\
&\dfrac{(z^2-\mqs^{4n-k-l+1})(z^2-\mqs^{2n-k-l-1})
(z^2-\mqs^{2n+1})(z^2-\mqs^{-1})}{\phi_{k,l}(\mqs^{\frac{1}{2}}z)} \in  \ko[z^{\pm 1}]
 \quad \text{if} \ l=k.
\end{align*}

Since $\phi_{k,l}(\mqs^{\frac{1}{2}}z)$ divides
$\Dkl(\mqs^{\frac{1}{2}}z)$,
we conclude that
\begin{align}
& \phi_{k,l}(z) = 1 & \text{ if }  k+l <n, \nonumber \\
& \dfrac{(z^2-\mqs^{2n-k-l})}{\phi_{k,l}(z)} \ \in
\ko[z^{\pm 1}]
& \text{ if }  k+l \ge n. \label{eq:psi D}
\end{align}

Now our assertion holds if $z=\pm\mqs^{\frac{2n-k-l}{2}}$ is
not a zero of $\phi_{k,l}(z)$ for $k+l\ge n$.
From \eqref{eq: Dd}, one can see that
$\phi_{k,l}(-\mqs^{n}z^{-1})\equiv\phi_{k,l}(z)$. Thus we suffice to prove
that $z=\pm\mqs^{\frac{k+l}{2}}$ is not a zero of
$\phi_{k,l}(z)$ for $k+l\ge n$. If $k+l>n$, then we have $n > 2n-k-l$ and hence $\phi_{k,l}(z)=1$.

Now we consider when $k+l=n$. Then Lemma \ref{lemma: k,l to n,n} tells that $d_{k,l}(z)$ has zeros at
$z=\pm\mqs^{\frac{k+l}{2}}$. By the definition of $\Dkl(z)$,
$\pm\mqs^{\frac{k+l}{2}}$ is a zero of multiplicity $1$. Thus $\pm\mqs^{\frac{k+l}{2}}$ can not be a zero of
$\phi_{k,l}(z)$ when $k+l=n$.
\end{proof}

Now we shall compute $d_{k,n}(z)$ for $\g=B^{(1)}_n$ and $\g=D^{(2)}_{n+1}$.
By Lemma \ref{Lem: aij and dij}, \eqref{eq:deno1n B}
and \eqref{eq:deno1n D}, we have
\begin{align}
 a_{1,n}(z) & \equiv  \begin{cases} \dfrac{[2n-3]_{(n+1)}[6n-1]_{(n+1)}}{[2n+1]_{(n+1)}[6n-5]_{(n+1)}} & \text{if $\g=B^{(1)}_{n}$,} \\[2ex]
                   \dfrac{ \{ \frac{3n+1}{2} \}' \{ \frac{n-1}{2} \}' }{ \{ \frac{3n-1}{2} \}' \{\frac{n+1}{2} \}' } & \text{if $\g=D^{(2)}_{n+1}$,}
                   \end{cases}   \label{eq: a1n}
\end{align}
where, for $a,k \in \Z$ and $b \in \dfrac{1}{2}\Z$,
$$[a]_{(k)}=((-1)^kq_s^{a}z ; p^{*2})_\infty  \quad \text{ and } \quad \{ b\}' = ( -\sqrt{-1} \mqs^b; p^{*2})_\infty (\sqrt{-1}\mqs^b; p^{*2})_\infty.$$

Now, we give a proof only for $\g=B^{(n)}_1$. For $\g=D^{(2)}_{n+1}$, one can apply the same arguments.

\begin{proposition} \label{proposition: a_k,n }
  For $1\leq l \le  n-1$, we have
  \begin{align} \label{eq:aln}
    a_{l,n}(z) \equiv \begin{cases} \dfrac{[2n-2l-1]_{(n+l)}[6n+2l-3]_{(n+l)}}{[2n+2l-1]_{(n+l)}[6n-2l-3]_{(n+l)}} & \text{ if } \g =B^{(1)}_n, \\
\dfrac{ \{ \frac{3n+l}{2} \}'  \{\frac{n-l}{2}\}' }{  \{ \frac{3n-l}{2} \}'  \{\frac{n+l}{2}\}' } & \text{ if } \g =D^{(2)}_{n+1}. \end{cases}
  \end{align}
\end{proposition}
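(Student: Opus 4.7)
The plan is to mimic the strategy used in Proposition \ref{prop: akl} for the case $k=n$, namely to derive a one-step recursion for $a_{n,l}(z)$ (using symmetry \eqref{eq: symmetric}) and then iterate starting from the known formula \eqref{eq: a1n} for $a_{1,n}(z)$. First, I would apply the commutative diagram \eqref{Diagram: Runiv} with the surjective homomorphism $p_{l-1,1}$ from \eqref{p(k-1,1)} (available since $2 \le l \le n-1 = n-\vartheta$ for $\g=B^{(1)}_n,D^{(2)}_{n+1}$), taking $k$ to be $n$ (the index of the spin node). As in \eqref{Diagram: Runiv2}, chasing the dominant extremal weight vector $v_{[1,\ldots,n]}\otimes v_{[1,\ldots,l-1]}\otimes v_l$ through the diagram gives an equation relating $a_{n,l}(z)$ with $a_{n,l-1}((-q^t)^{-1/t}z)\cdot a_{n,1}((-q^t)^{(l-1)/t}z)$, weighted by the appropriate coefficient extracted from $\Rnorm_{n,1}((-q^t)^{(l-1)/t}z)(v_{[1,\ldots,n]}\otimes v_l)$.

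The essential intermediate step is to show
\[
\Rnorm_{n,1}(z)\bigl(v_{[1,\ldots,n]}\otimes v_l\bigr)=v_l\otimes v_{[1,\ldots,n]} \qquad \text{for } 1\le l\le n,
\]
where $v_{[1,\ldots,n]}=(+,+,\ldots,+)$ is the dominant extremal vector of the spin representation $V(\va_n)$. This follows by applying $f_{l-1}f_{l-2}\cdots f_1$ to both sides of the identity $\Rnorm_{n,1}(v_{[1,\ldots,n]}\otimes v_1)=v_1\otimes v_{[1,\ldots,n]}$ (which holds because $\Rnorm$ sends dominant extremal tensor to dominant extremal tensor by definition), and using two key facts from Section \ref{Append: vector spin reps}: for $1\le i\le n-1$, the minuscule nature of the spin weights forces $f_i\,v_{[1,\ldots,n]}=0$, and $\langle h_i,\va_n\rangle=0$ gives $K_i\,v_{[1,\ldots,n]}=v_{[1,\ldots,n]}$. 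Consequently $f_i$ acts trivially on the first tensor factor, and the standard chain $v_1\overset{f_1}{\mapsto}v_2\overset{f_2}{\mapsto}\cdots\overset{f_{l-1}}{\mapsto}v_l$ on $V(\va_1)$ carries over verbatim. Because $v_{[1,\ldots,n-1]}\otimes v_l$ vanishes under $p_{l-1,1}$ in the analogous way to the proof of Proposition \ref{prop: akl}, this then yields the recursion
\[
a_{n,l}(z)=a_{n,l-1}\bigl((-q^t)^{-1/t}z\bigr)\cdot a_{n,1}\bigl((-q^t)^{(l-1)/t}z\bigr) \qquad (2\le l\le n-1).
\]

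Iterating this recursion gives
\[
a_{n,l}(z)\equiv \prod_{j=0}^{l-1}a_{n,1}\!\left((-q^t)^{(l-1-2j)/t}z\right),
\]
and the remaining task is to substitute the closed form of $a_{n,1}(z)=a_{1,n}(z)$ from \eqref{eq: a1n} and carry out the telescoping of the $(\,\cdot\,;p^{*2})_\infty$ factors. I expect this telescoping to be the main computational obstacle: in each of the four infinite product factors appearing in $a_{1,n}(z)$, the shift $z\mapsto (-q^t)^{s/t}z$ with $s$ running over an arithmetic progression of step $2$ produces a chain of $(\,\cdot\,;p^{*2})_\infty$ terms in arithmetic progression of step $4$ in the $q_s$-exponent (respectively in the $\mqs$-exponent for $D^{(2)}_{n+1}$), and one must verify that all intermediate terms cancel between consecutive shifts, leaving exactly the two extreme terms that produce the indices $2n\pm 2l-1$ and $6n\pm 2l-3$ (resp.\ $\tfrac{3n\pm l}{2}$ and $\tfrac{n\mp l}{2}$) appearing in \eqref{eq:aln}. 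One must also track the uniform parity shift $(-1)^{n+l}$ in the subscripts $(n+l)$ of $[\,\cdot\,]_{(n+l)}$ and $\{\,\cdot\,\}'$, which follows from the fact that all exponents $s\in\{-(l-1),-(l-3),\ldots,l-1\}$ have the same parity as $l-1$. The cases $B^{(1)}_n$ and $D^{(2)}_{n+1}$ are handled uniformly by this argument with $t=1$, $p^*=-(-q)^{2n-1}$ in the former and $t=2$, $p^*=-\mqs^{n}$ in the latter; the bookkeeping differs only in notation.
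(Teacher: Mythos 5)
Your proposal is correct and follows essentially the same route as the paper: the paper also sets $k=n$ in the diagram \eqref{Diagram: Runiv}, uses that $f_i$ ($1\le i\le l-1$) kills the spin highest weight vector $m_n^+$ to conclude $\Rnorm_{n,1}((-q^t)^{(l-1)/t}z)(m_n^+\otimes v_l)=v_l\otimes m_n^+$, derives the recursion $a_{n,l}(z)=a_{n,l-1}((-q^t)^{-1/t}z)\,a_{n,1}((-q^t)^{(l-1)/t}z)$, and concludes by induction on $l$ from \eqref{eq: a1n}. (Your aside about a term vanishing under $p_{l-1,1}$ is superfluous here, since $w$ has no second summand in the spin case, but this does not affect the argument.)
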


\begin{proof}
By \eqref{eq: a1n}, it suffices to consider when $2 \le l \le n-1$.
Applying the commutative diagram \eqref{Diagram: Runiv} with setting $k=n$, \eqref{Diagram: Runiv2} tells that we have
\begin{align*}
a_{n,l-1}(-q^{-1}z) a_{n,1}((-q)^{l-1}z)v_{[1,\ldots, l-1]} \otimes w \longmapsto a_{n,l}(z) v_{[1,\ldots, l-1,l]} \otimes m^+_{n},
\end{align*}
where $w=\Rnorm_{n,1}((-q)^{l-1}z)(m^+_{n} \otimes v_{l})$ for the highest weight vector $m_n^+$ of $V(\va_n)$.

Since $m^+_{n}$ vanishes by the action $f_i$ $( 1 \le i \le l-1)$, as in the proof of Proposition \ref{prop: akl},
\begin{align*}
w= \Rnorm_{n,1}((-q)^{l-1}z)(m^+_{n} \otimes v_{l})= v_{l} \otimes m^+_{n},
\end{align*}
and hence
\begin{align} \label{eq:recursive formula a_nk}
a_{n,l}(z) = a_{n,l-1}(-q^{-1}z) \ a_{n,1}((-q)^{l-1}z) \quad \text{ for $2 \le l \le n-1$}.
\end{align}
By \eqref{eq: a1n} and an induction on $l$, our assertion follows.
\end{proof}

\begin{theorem}
  For $1 \le k \le n-1$, we have
  \begin{align}\label{eq:dkn}
    d_{k,n}(z)= \begin{cases} \displaystyle \prod_{s=1}^{k}(z-(-1)^{n+k}q_s^{2n-2k-1+4s}) & \text{ if } \g =B^{(1)}_n, \\
                              \displaystyle \prod_{s=1}^{k}(z^2+(-q^{2})^{n-k+2s}) & \text{ if } \g =D^{(2)}_{n+1}. \end{cases}
  \end{align}
\end{theorem}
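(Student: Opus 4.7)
The plan is to follow the template set by the proof of Theorem~\ref{Thm: dkl}. Denote by $D_{k,n}(z)$ the right-hand side of \eqref{eq:dkn}, and aim to prove $d_{k,n}(z)=D_{k,n}(z)$ by induction on $k$. The base case $k=1$ is supplied by the known denominators \eqref{eq:deno1n B} and \eqref{eq:deno1n D}. As a preliminary step, I would verify by direct computation the multiplicative recursion
\begin{equation*}
D_{k,n}(z) \;\equiv\; D_{k-1,n}\bigl((-q^t)^{-1/t}z\bigr)\cdot D_{1,n}\bigl((-q^t)^{(k-1)/t}z\bigr),
\end{equation*}
which mirrors on the nose the recursion \eqref{eq:recursive formula a_nk} satisfied by $a_{k,n}(z)$. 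I would also record that $D_{k,n}(z)$ is invariant under $z\mapsto p^{*}z^{-1}$ up to a unit, so that the quotient $D_{k,n}(z)/d_{k,n}(z)$ is forced to be invariant under this involution by \eqref{eq: aij and dij}.

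For the forward divisibility, the plan is to apply Lemma~\ref{lem:dvw avw} to the surjection
\begin{equation*}
p_{k-1,1}\colon V(\varpi_{k-1})_{(-q^t)^{-1/t}}\otimes V(\varpi_1)_{(-q^t)^{(k-1)/t}}\twoheadrightarrow V(\varpi_k)
\end{equation*}
of Theorem~\ref{thm: pij iota ij 1}, with $W=V(\varpi_n)$. The resulting element of $\ko[z^{\pm 1}]$ splits as the denominator ratio $d_{n,k-1}((-q^t)^{-1/t}z)\,d_{n,1}((-q^t)^{(k-1)/t}z)/d_{n,k}(z)$ multiplied by the corresponding ratio of $a$-functions. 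By the recursion \eqref{eq:recursive formula a_nk} the $a$-ratio is a unit in $\ko[z^{\pm 1}]^{\times}$, so the denominator ratio itself lies in $\ko[z^{\pm 1}]$. Combining with the induction hypothesis $d_{n,k-1}=D_{n,k-1}$, $d_{n,1}=D_{n,1}$, and the recursion above, this yields $D_{k,n}(z)/d_{k,n}(z)\in \ko[z^{\pm 1}]$, i.e.\ $d_{k,n}(z)\mid D_{k,n}(z)$.

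For the reverse divisibility I would apply Lemma~\ref{lem:dvw avw} to the dual surjection
\begin{equation*}
p^{*}_{k-1,1}\colon V(\varpi_k)_{(-q^t)^{-1/t}}\otimes V(\varpi_1)_{p^{*}(-q^t)^{-k/t}}\twoheadrightarrow V(\varpi_{k-1})
\end{equation*}
from \eqref{eq:second surj}, again with $W=V(\varpi_n)$. After explicitly computing the relevant $a$-ratio by means of Proposition~\ref{proposition: a_k,n } (note that unlike the forward case this ratio is a genuine rational function, not a unit), one obtains a second divisibility statement that, together with the inductive value $d_{n,k-1}=D_{n,k-1}$ and with the form of $d_{n,1}=D_{n,1}$, forces the quotient $D_{k,n}(z)/d_{k,n}(z)$ to have no zeros at the points corresponding to the roots of $D_{k,n}(z)$ that are not already accounted for. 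Combined with the $z\mapsto p^{*}z^{-1}$ symmetry noted above, this reduces the quotient to a monomial, and monicity of both sides pins the constant down to~$1$.

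The main obstacle is the reverse divisibility. Unlike in the forward step, the ratio of $a$-functions obtained from $p^{*}_{k-1,1}$ contributes spurious rational factors, and to show that these spurious factors account precisely for the missing potential zeros of $d_{k,n}$ one has to match them carefully against the roots appearing in $D_{k,n}(z)$. If any predicted zero of $D_{k,n}(z)$ survives this matching without being supplied by the induction, it has to be exhibited directly; for $\g=D^{(2)}_{n+1}$ the cleanest route is to invoke Lemma~\ref{lemma: k,l to n,n} (and Theorem~\ref{thm: nnk}) which produces reducibility of $V(\varpi_k)\otimes V(\varpi_n)_z$ at exactly the required spectral parameters, paralleling the role played by Lemma~\ref{lemma: k,l to n,n} in pinning down the factor $\phi_{k,l}(z)$ in the proof of Theorem~\ref{Thm: dkl}; for $\g=B^{(1)}_n$ the analogous zeros come from iterating Theorem~\ref{thm: pij iota ij 1} against the spin node and using \eqref{eq:deno1n B}.
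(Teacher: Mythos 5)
Your proposal follows essentially the same route as the paper's proof: the base case from the known $d_{1,n}(z)$, the forward divisibility from $p_{k-1,1}$ with $W=V(\varpi_n)$ where the $a$-ratio is a unit by \eqref{eq:recursive formula a_nk}, and the reverse divisibility from the dual surjection \eqref{eq:second surj} with the $a$-ratio computed from \eqref{eq:aln}. The only deviation is your final contingency about exhibiting zeros via Lemma~\ref{lemma: k,l to n,n} or the spin node, which turns out to be unnecessary: the spurious linear factors arising in the reverse step are discarded simply because, by the induction hypothesis, they are not zeros of $d_{k-1,n}(z)$, and the two divisibilities together with monicity already force $d_{k,n}(z)$ to equal the claimed product.
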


\begin{proof}
By \eqref{eq:deno1n B}, it suffices to consider when $2 \le k \le n-1$.
From the surjective homomorphism in Theorem \ref{thm: pij iota ij 1}
$$V(\varpi_{k-1})_{(-q)^{-1}} \otimes  V(\va_1)_{(-q)^{k-1}}\twoheadrightarrow V(\va_k),$$
the first formula in Lemma \ref{lem:dvw avw} with $W=V(\va_n)$ yields an element in $\ko[z^{\pm 1}]$ as follows:
\begin{align*}
    \dfrac{d_{k-1,n}(-q^{-1}z) d_{1,n}((-q)^{k-1}z)}{d_{k,n}(z)} \dfrac{a_{k,n}(z)}
  { a_{k-1,n}(-q^{-1}z) a_{1,n}((-q)^{k-1}z)} \in \ko[z^{\pm 1}].
  \end{align*}
By \eqref{eq:recursive formula a_nk}, the element is written in more simplified form as follows:
\begin{align} \label{eq:dkn form 1}
    \dfrac{d_{k-1,n}(-q^{-1}z) d_{n,1}((-q)^{k-1}z) }
  {d_{k,n}(z)}
 \equiv\dfrac{d_{k-1,n}(-q^{-1}z) (z-(-1)^{n+k}q_s^{2n-2k+3}) }
  {d_{k,n}(z)}
    \in \ko[z^{\pm 1}].
  \end{align}
On the other hand, for each $2 \le k \le n-1 $, we have a surjective
homomorphism
\begin{align*}
V(\varpi_k)_{-q^{-1}} \otimes V(\varpi_1)_{-(-q)^{2n-1-k}} \twoheadrightarrow
V(\varpi_{k-1}).
  \end{align*}
Then the second formula in Lemma \ref{lem:dvw avw} with $W=V(\va_n)$ yields an element in $\ko[z^{\pm 1}]$ as follows:
\begin{align} \label{eq: second b}
    \dfrac{d_{1,n}(-(-q)^{k+1-2n}z) d_{k,n}(-qz)}{d_{k-1,n}(z)}\dfrac{ a_{k-1,n}(z)}
  { a_{1,n}(-(-q)^{k+1-2n}z) a_{k,n}(-qz)} \in \ko[z^{\pm 1}].
  \end{align}
Using \eqref{eq:aln}, the second factor of \eqref{eq: second b} can be written as
\begin{align*}
    \dfrac{ a_{k-1,n}(z)}
  {a_{1,n}(-(-q)^{k+1-2n}z) a_{k,n}(-qz) }
  \equiv
  \dfrac{z-(-1)^{n+k+1}q_s^{2n-2k-3}}{z-(-1)^{n+k+1}q_s^{2n-2k+1}}.
\end{align*}
and hence \eqref{eq: second b} becomes
    \begin{align*}
    \dfrac{ d_{k,n}(-qz) }
  {d_{k-1,n}(z)}
  \dfrac{(z-(-1)^{n+k+1}q_s^{6n-2k-1})(z-(-1)^{n+k+1}q_s^{2n-2k-3})}{(z-(-1)^{n+k+1}q_s^{2n-2k+1})} \in \ko[z^{\pm 1}].
  \end{align*}
By the induction hypothesis, $z=(-1)^{n+k+1}q_s^{6n-2k-1}$ and $(-1)^{n+k+1}q_s^{2n-2k-3}$ are not zeros of $d_{k-1,n}(z)$.
Hence we can conclude that
  \begin{align} \label{eq:dkn form 2}
    \dfrac{ d_{k,n}(-qz) }
  {d_{k-1,n}(z)(z-(-1)^{n+k+1}q_s^{2n-2k+1})} \in \ko[z^{\pm 1}],
  \end{align}
which is equivalent to
\begin{align*}
    \dfrac{ d_{k,n}(z) }
  {d_{k-1,n}(-q^{-1}z)(z-(-1)^{n+k} q_s^{2n-2k+3})}\in \ko[z^{\pm 1}].
  \end{align*}
Considering \eqref{eq:dkn form 1} and \eqref{eq:dkn form 2} together, our assertion follows:
\begin{align*}
  d_{k,n}(z)\equiv d_{k-1,n}(-q^{-1}z)(z-(-1)^{n+k}q_s^{2n-2k+3})
  =\prod_{s=1}^{k}(z-(-1)^{n+k}q_s^{2n-2k-1+4s}).
\end{align*}
\end{proof}

\begin{remark}
In conclusion, we can observe that
$$\text{ for all $1 \le k \le k \le n$, $\Rnorm_{k,l}(z)$ has only simple poles unless $\g = D^{(2)}_{n+1}$.}$$
For $\g=D^{(2)}_{n+1}$, $\Rnorm_{k,l}(z)$ has a double pole at $z=\pm(-q^2)^{s/2}$ if
$$ 2 \le k,l \le n-1, \ k+l > n, \ 2n+2-k-l \le s \le k+l \text{ and } s \equiv k+l \mod \ 2.$$
\end{remark}

\newpage
\appendix
\section{The table of denominators.}
\vskip 1em
\begin{center}
\begin{tabular}{ | c | c | c | c  | } \hline
Type & $n$ & $k,l$ &  Denominators \\ \hline
$A_{n}^{(1)}$& $n \ge 1$ & $1 \le k,l \le n$& $ d_{k,l}(z)= \displaystyle\prod_{s=1}^{ \min(k,l,n+1-k,n+1-l)} \big(z-(-q)^{2s+|k-l|}\big)$  \\ \hline
$B_{n}^{(1)}$& $n \ge 3$ & $1 \le k,l \le n-1$ & $d_{k,l}(z) =  \displaystyle \prod_{s=1}^{\min (k,l)} \big(z-(-q)^{|k-l|+2s}\big)\big(z+(-q)^{2n-k-l-1+2s}\big)$ \\ \cline {3-4}
$q_s^2=q$   &  & $1 \le k \le n-1$ & $d_{k,n}(z) = \displaystyle  \prod_{s=1}^{k}\big(z-(-1)^{n+k}q_s^{2n-2k-1+4s}\big)$ \\ \cline {3-4}
            &  & $k=l=n$ & $ d_{n,n}(z)=\displaystyle \prod_{s=1}^{n} \big(z-(q_s)^{4s-2}\big)$ \\ \hline
$C_{n}^{(1)}$& $n \ge 2$ & $1 \le k,l \le n$& $ d_{k,l}(z)= \hspace{-2ex}\displaystyle \hspace{-3ex}\prod_{s=1}^{ \min(k,l,n-k,n-l)} \hspace{-5ex}
\big(z-(-q_s)^{|k-l|+2s}\big)\hspace{-2ex}\prod_{i=1}^{ \min(k,l)}\hspace{-1.5ex} \big(z-(-q_s)^{2n+2-k-l+2s}\big)$   \\ \hline
$D_{n}^{(1)}$& $n \ge 4$ & $1 \le k,l \le n-2$ & $d_{k,l}(z) = \displaystyle \prod_{s=1}^{\min (k,l)}
                \big(z-(-q)^{|k-l|+2s}\big)\big(z-(-q)^{2n-2-k-l+2s}\big)$ \\ \cline {3-4}
             & & $1 \le k \le n-2$ & $ d_{k,n-1}(z)=d_{k,n}(z) = \displaystyle \prod_{s=1}^{k}\big(z-(-q)^{n-k-1+2s}\big)$ \\ \cline {3-4}
             & & $\{k,l\}=\{n,n-1\}$ & $d_{n,n-1}(z)=d_{n-1,n}(z)=\displaystyle \prod_{s=1}^{\lfloor \frac{n-1}{2} \rfloor} \big(z-(-q)^{4s}\big)$\\ \cline {3-4}
             & & $k=l\in\{n,n-1\}$ & $d_{n,n}(z)=d_{n-1,n-1}(z)=\displaystyle \prod_{s=1}^{\lfloor \frac{n}{2} \rfloor} \big(z-(-q)^{4s-2}\big)$ \\ \hline
$A_{2n-1}^{(2)}$& $n \ge 3$ & $1 \le k,l \le n$ & $ d_{k,l}(z)= \displaystyle \prod_{s=1}^{\min(k,l)} \big(z-(-q)^{|k-l|+2s}\big)\big(z+(-q)^{2n-k-l+2s}\big)$ \\ \hline
$A_{2n}^{(2)}$& $n \ge 1$ & $1 \le k,l \le n$ & $d_{k,l}(z) = \displaystyle \prod_{s=1}^{\min(k,l)} \big(z-(-q)^{|k-l|+2s}\big)\big(z-(-q)^{2n+1-k-l+2s}\big)$ \\ \hline
$D_{n+1}^{(2)}$& $n \ge 2$ & $1 \le k,l \le n-1$ &  $d_{k,l}(z) = \displaystyle \prod_{s=1}^{\min(k,l)} \big(z^2 - \mqs^{|k-l|+2s}\big)\big(z^2 - \mqs^{2n-k-l+2s}\big)$ \\ \cline {3-4}
               & & $1 \le k \le n-1$ & $d_{k,n}(z) = \displaystyle \prod_{s=1}^{k}\big(z^2+(-q^{2})^{n-k+2s}\big)$ \\ \cline {3-4}
               & & $k=l=n$ & $ d_{n,n}(z)=\displaystyle \prod_{s=1}^{n} \big(z+\mqs^{s}\big)$ \\ \hline
\end{tabular}
\end{center}


\begin{thebibliography}{99}
\nocite{}


\bibitem{AK} T. Akasaka and M. Kashiwara,
{\it Finite-dimensional representations of quantum affine algebras},
Publ. RIMS. Kyoto Univ., {\bf33} (1997), 839-867.

\bibitem{Chari} V. Chari,
{\it Braid group actions and tensor products}, Int. Math. Res. Not.
{\bf2002} (7) (2010), 357-382.

\bibitem{CH} V. Chari, D. Hernadez, {\it Beyond Kirillov-Reshetikhin modules, In Quantum affine algebras, extended
affine Lie algebras, and their applications}, Contemp. Math., {\bf 506}, Amer. Math. Soc., Providence, (2010), 49--81.

\bibitem{CP94} V. Chari and A. Pressley,
{\em A guide to Quantum Groups}, Cambridge U. Press, Cambridge, 1994.

\bibitem{CP96} \bysame, {\it Yangians, integrable quantum systems and Dorey's rule}, Comm. Math. Phys. {\bf 181} (1996),
no. 2, 265-302.

\bibitem{DO94}
E. Date and M. Okado, {\it Calculation of excitation spectra of the spin model related with the vector
representation of the quantized affine algebra of type $A^{(1)}_n$}, Internat. J. Modern Phys. A {\bf 9} (3) (1994), 399--417.

\bibitem{DO96}
B. Davies  and  M. Okado, {\it Excitation spectra of spin models
constructed from quantized affine algebras of type $B_n^{(1)},
D_n^{(1)}$}, Internat. J. Modern Phys. A {\bf 11} (11) (1996),
1975-2017.

\bibitem{FM}
E. Frenkel and E. Mukhin, {\it Combinatorics of q-characters of finite-dimensional representations
of quantum affine algebras}, Commun. Math. phys. {\bf 216} (2001), 23--57.

\bibitem{FR92}
I. B. Frenkel  and  N. Yu. Reshetikhin,
{\it Quantum affine algebras and holonomic difference equations},
Commun. Math. Phys.  {\bf146} (1992), 1--60.

\bibitem{FR}
E. Frenkel and N. Reshetikhin, {\it The q-characters of representations of and
deformations of W-algebras in quantum affine algebras} in Recent
Developments in Quantum Affine Algebras and Related Topics, (Raleigh, N.C.,
1998), Contemp. Math. {\bf 248}, Amer. Math. Soc., (Providence), (1999), 163--205.

\bibitem{GV}
V. Ginzburg and \'{E}. Vasserot, {\it Langlands reciprocity for affine quantum groups
of type $A_n$}, Int. Math. Res. Not. (1993), no. 3, 67--85.

\bibitem{HK02} J.~Hong and S.-J.~Kang,
\emph{Introduction to Quantum Groups and Crystal Bases},
Graduate Studies in Mathematics, \textbf{42}, Amer. Math. Soc., (Providence), RI, (2002).

\bibitem{HYZ}
B-Y. Hou, W-L. Yang and Y-Z. Zhang, {\it The twisted quantum affine algebra
$U_q(A^{(2)}_2)$ and correlation functions of the Izergin-Korepin model}, Nuclear Phys. {\bf B556}, (1999), 485--504.

\bibitem{J86}
M. Jimbo, {\it Quantum R matrix for the generalized Toda system},
Comm. Math. Phys., {\bf102}(4) (1986), 537--547, .

\bibitem{JMO00}
N. Jing, K. C. Misra and M. Okado, {\it q-wedge modules for
quantized enveloping algebras of classical type}, J. Algebra {\bf
230} (2000) 518--539.

\bibitem{Kac} V. Kac,
\newblock{\it Infinite dimensional Lie algebras},
\newblock{3rd ed., Cambridge University Press, Cambridge, 1990}.

\bibitem{KMN2}
S.-J. Kang, M.~Kashiwara, K.~C. Misra, T.~Miwa, T.~Nakashima, A.
Nakayashiki,
 \newblock \emph{Perfect crystals of quantum affine {L}ie algebras},
Duke Math. J. \textbf{68} (1992), 499--607.

\bibitem{KKK13a} S.-J. Kang, M. Kashiwara and  M. Kim,
\newblock{\it Symmetric quiver hecke algebras and R-matrices of quantum affine algebras},
\newblock{arXiv:1304.0323 [math.RT]}.

\bibitem{KKK13b} \bysame,
\newblock{\it Symmetric quiver hecke algebras and R-matrices of quantum affine algebras II},
\newblock{arXiv:1308.0651 [math.RT]}.

\bibitem{KKKO14} S.-J. Kang, M. Kashiwara, M. Kim and S-j. Oh,
\newblock{\it Simplicity of heads and socles of tensor products}, \newblock{arXiv:1404.4125 [math.RT]}.

\bibitem{KKKO14A} \bysame,
{\it Symmetric quiver Hecke algebras and R-matrices of quantum affine algebras III},
\newblock{arXiv:1406.0591 [math.RT]}.

\bibitem{KKKO14D}
\bysame,
{\it Symmetric quiver Hecke algebras and R-matrices of quantum affine algebras IV},
in preparation.

\bibitem{Kas02}
M. Kashiwara, {\it On level zero representations of quantum affine
algebras}, Duke. Math. J. {\bf112} (2002), 117--175.

\bibitem{KL09}
M.~Khovanov and A. D.~Lauda, \emph{A diagrammatic approach to categorification of quantum groups
  {I}}, Represent. Theory \textbf{13} (2009), 309--347.

\bibitem{KL11}
\bysame, \emph{A diagrammatic approach to categorification of quantum groups
  {II}}, Trans. Amer. Math. Soc. \textbf{363} (2011), no.~5, 2685--2700.

\bibitem{Nak} H. Nakajima, {\it Quiver varieties and finite-dimensional representations of quantum
affine algebras}, J. Amer.Math. Soc. {\bf 14} (2001), 145--238.

\bibitem{Okado90}
M. Okado, {\it Quantum $R$ matrices related to the spin representations of $B_n$ and $D_n$},
Commun. Math. Phys. {\bf 134} (1990), 467--486.

\bibitem{OS08} M. Okado, A. Schilling, {\it Existence of Kirillov-Reshetikhin crystals for nonexceptional types},
Representat. Theory {\bf 12} (2008), 186--207.

\bibitem{R08}
R.~Rouquier, \emph{2 {K}ac-{M}oody algebras}, arXiv:0812.5023 (2008).

\end{thebibliography}
\end{document}